\newtheoremstyle{plainsl}%
        {\topsep}
        {\topsep}
        {\slshape} 
        {}
        {\normalfont\bfseries}
        {.}
        { }
        {}
\theoremstyle{plainsl}
\newtheorem{theorem}{Theorem}[section]
\newtheorem{proposition}[theorem]{Proposition}
\newtheorem{lemma}[theorem]{Lemma}
\newtheorem{corollary}[theorem]{Corollary}
\newtheorem{definition}[theorem]{Definition}
\newtheorem{example}[theorem]{Example}
\newtheorem{problem}[theorem]{Problem}
\newtheorem{conjecture}[theorem]{Conjecture}
\newcommand\cref[1]{Corollary~\ref{cor:#1}}
\newcommand\sqr[2]{{\vbox{\hrule height.#2pt
    \hbox{\vrule width.#2pt height#1pt \kern#1pt
        \vrule width.#2pt}\hrule height.#2pt}}}
\renewcommand\qed{%
        \ifmmode\eqno\sqr53
        \else\nolinebreak\ \hfill\sqr53\medbreak\fi}
\numberwithin{equation}{section}
\newcommand{\re}{{\mathbb R}}
\newcommand{\cx}{{\mathbb C}}
\newcommand{\ints}{{\mathbb Z}}
\newcommand{\BMA}{{\mathbb A}}
\newcommand{\cD}{\mathcal{D}}
\newcommand{\cE}{{\mathcal E}}  
\newcommand{\hollow}{\mathsf{hollow}}
\newcommand{\cO}{{\mathcal O}}
\newcommand{\cR}{{\mathcal R}}
\newcommand{\sSR}{{\mathsf S}{\mathsf R}}
\newcommand{\scafs}{{\mathsf{s}}}
\newcommand{\scaft}{{\mathsf{t}}}
\newcommand{\cT}{{\mathcal T}}
\newcommand{\Ter}{{\mathbb T}}
\newcommand{\bv}{{\mathbf v}}
\newcommand{\X}{{\mathcal X}}
\newcommand{\one}{{\mathbf{1}}}
\newcommand{\zero}{{\mathbf{0}}}
\newcommand{\SUM}{\mathsf{SUM}}
\newcommand{\Mat}{\mathsf{Mat}}
\newcommand{\sS}{\mathsf{S}}
\newcommand{\tr}{\mathsf{tr}}
\newcommand{\bW}{\mathbf{W}}
\newcommand{\sbW}{\textbf{\fontfamily{cmss}\selectfont W}}
\DeclareMathOperator\Hom{Hom}
\DeclareMathOperator\Sym{Sym}
\DeclareMathOperator\spn{span}
\begin{document}
\thispagestyle{empty}
\setcounter{page}{1}
\title{Scaffolds: A graph-based system for computations in Bose-Mesner algebras}
\author{
William J.~Martin \\
Department of Mathematical Sciences \\
Worcester Polytechnic Institute \\
Worcester, MA USA \\
{\tt  martin@wpi.edu}}

\date{\today} 
\maketitle

\medskip

\begin{abstract}
Let $X$ be a finite set and let $\Mat_X(\cx)$ denote the algebra of matrices with rows and columns indexed by $X$ and entries from the complex numbers acting on $\cx^X$ with standard basis $\{ \hat{x} \mid x\in X\}$.  
For a digraph $G=(V(G),E(G))$, function $R:[m] \rightarrow V(G)$ with $r_j := R(j)$,  and a function $w$ 
from the arcs of $G$ to $\Mat_X(\cx)$, we define the ``scaffold'' $\sS(G,R;w)$ as the sum over all functions $\varphi$ from $V(G)$ to $X$ of the  $m$-fold tensors $\widehat{\varphi(r_1)} \otimes \widehat{\varphi(r_2)} \otimes \cdots 
\otimes \widehat{\varphi(r_m)}$ scaled by  the product of the entries $w(e)_{\varphi(a),\varphi(b)}$ over all arcs $e=(a,b)$ of $G$. If $\Gamma$ is a digraph with adjacency matrix $A$, $m=0$ and $w(e)=A$ for all $e$, this simply counts all digraph homomorphisms from $G$ to $\Gamma$. 
If instead, $\Gamma$ is a distance-regular graph with $i^{\rm th}$ distance matrix $A_i$ and $G$ is a star  
graph $K_{1,m}$ with central node $a$ and degree one nodes $r_1,\ldots,r_m$, the  scaffold 
$\sS(G,R;w)$ encodes generalized intersection numbers 
$\left[ \begin{array}{ccc} a_1 & \cdots & a_k \\ i_1 & \cdots & i_k\end{array} \right]$ 
where $w(e_h) = A_{i_h}$ for $e_h=(a,r_h)$. Scaffolds also arise in the the theory of link invariants and spin models:
the partition function of a link diagram is encoded as a scaffold of order zero and invariance under the  three
Reidemeister moves are encoded as identities among scaffolds of order two and three. 

These diagrams were introduced in the late 1980s by Arnold Neumaier to aid in computations in the
theory of distance-regular graphs.
Various authors have used scaffolds to systematize complex calculations of parameters in association 
schemes. Prioritizing the tutorial value of the paper, we briefly revisit results of Dickie, Suzuki, and 
Terwilliger using this diagrammatic formalism.
Commonly employed transformations are presented as a basic system of ``moves'' on these diagrams 
that preserve their value and  certain natural actions of the Bose-Mesner algebra on nodes and edges 
of a scaffold given by Terwilliger and Jaeger are reviewed. Many of the results presented here are not new; 
our goal is to  collect and present, in a uniform fashion, Neumaier's original idea extended to tensors and
its use by various authors. Sometimes the term ``star-triangle 
diagram'' appears for what we, in this paper,  call ``scaffolds''.  

When one fixes the diagram $G$ and root nodes $R$ but allows the edge weights to vary over matrices in
a given coherent algebra $\BMA$, the vector space  $\sbW((G,R); \BMA)$ spanned by all resulting tensors 
seems worthy of study. We show that $\sbW((H,R'); \BMA)$ is contained in $\sbW((G,R); \BMA)$ when 
$(H,R')$ is a rooted minor of $(G,R)$ and examine several important spaces of this form with three root  
nodes in connection with the Terwilliger algebra.

It is not surprising that some scaffold identities are more intuitive than others: for 
instance, much more is known about distance-regular graphs than is known about $Q$-polynomial 
(``cometric'') association schemes.  Connecting duality in the theory of association schemes to duality
of circular planar graphs, we present a conjecture dealing with dual pairs of planar 
scaffolds which points to a tool for the generation of new identities.
\end{abstract}

\tableofcontents
%
\section{Introduction}
\label{Sec:intro}

Without definition, we begin with some simple examples of the (edge-labeled, rooted) diagrams considered 
in this  paper. Once we 
arrive at a formal definition, the reader may check back to verify the calculations here as exercises. 
For example, we will see how, in a precise way, 
\begin{tikzpicture}[baseline=(A1),black,node distance=0.5cm,
solidvert/.style={draw, circle,  fill=red, inner sep=2.5pt}]
\node[solidvert] (A1) at (0,0) {};
   \path[->] (A1)   [thick, loop] edge node  [right, pos=0.2] {$\scriptstyle{M}$} (A1);
\end{tikzpicture}
 denotes the diagonal of matrix $M$ while 
\begin{tikzpicture}[baseline=(A1),black,node distance=0.5cm,
solidvert/.style={draw, circle,  fill=red, inner sep=2.5pt},
hollowvert/.style={  draw,  circle,  fill=white,  inner sep=2.5pt}]
\node[hollowvert] (A1) at (0,0) {};
   \path[->] (A1)   [thick, loop] edge node  [right, pos=0.2] {$\scriptstyle{M}$} (A1);
\end{tikzpicture}
 is its trace. We
will see matrix $N$, as a second-order tensor, represented as 
\begin{tikzpicture}[baseline=(A1),black,node distance=0.5cm,
solidvert/.style={draw, circle,  fill=red, inner sep=2.5pt}]
\node[solidvert] (A1) at (0,0) {};
\node[solidvert] (A2) at (1,0) {};
   \path[->] (A1)   [thick] edge node  [above] {$\scriptstyle{N}$} (A2);
\end{tikzpicture}
 and the sum of 
its entries as 
\begin{tikzpicture}[baseline=(A1),black,node distance=0.5cm,
solidvert/.style={draw, circle,  fill=red, inner sep=2.5pt},
hollowvert/.style={  draw,  circle,  fill=white,  inner sep=2.5pt}]
\node[hollowvert] (A1) at (0,0) {};
\node[hollowvert] (A2) at (1,0) {};
   \path[->] (A1)   [thick] edge node  [above] {$\scriptstyle{N}$} (A2);
\end{tikzpicture}
. The ordinary matrix product of $M$ and $N$ is encoded as 
a series reduction \input{arxivdiagrams/series2.tex} and entrywise multiplication is a parallel reduction
\input{arxivdiagrams/parallel2.tex}. If $\Gamma$ is a graph with adjacency matrix $A$, then \input{arxivdiagrams/4cycle.tex}
counts all homomorphisms from the cycle of length four into graph $\Gamma$ while 
\input{arxivdiagrams/induced4cycle.tex} counts, for each vertex $v$ of $\Gamma$, 
twice the number of induced cycles of length four passing through $v$, where $A'=J-I-A$ 
is the adjacency matrix of the complement of $\Gamma$.

Originating from unpublished notes of Neumaier (ca.\ 1989) and tensor calculations of Terwilliger \cite{terPQ},
diagrams of this sort seem to have been shared informally in the community for several decades now. 
Equivalent algebraic formulations appear in the work of Dickie \cite{dickie} and Suzuki \cite{suztwoq,suzimprim}.
An important special case arises in the state models for link invariants as seen, for example, in
Jaeger \cite{jaeger}. The primary goal of this paper is to present the diagrammatic formalism 
as a rigorous alternative to the more cumbersome algebraic expressions that these diagrams represent. Write
$[m]=\{1,\ldots,m\}$. For a
digraph $G=(V(G),E(G))$, $R: [m] \rightarrow V(G)$ with $r_j:=R(j)$, and a function $w$ mapping the edges of $G$ to matrices with rows and
columns indexed by a finite set $X$, we study the tensor
\begin{equation}
\label{Eq:scaffshort}
\sS(G,R;w ) = \sum_{ \varphi:V(G) \rightarrow X} \quad \left( \prod_{\substack{ e\in E(G)  \\ e=(a,b) }} w(e)_{\varphi(a),\varphi(b)} \right) \widehat{\varphi(r_1)} \otimes \widehat{\varphi(r_2)} \otimes \cdots  \otimes \widehat{\varphi(r_m)}
\end{equation}
and a slight generalization thereof. Note here that $w(e)$ is a matrix and $w(e)_{\varphi(a),\varphi(b)} $ is simply
the entry of that matrix which appears in row $\varphi(a)$, column $\varphi(b)$.

As this project neared its conclusion, a  pair of manuscripts by Penjic and Neumaier  appeared  
\cite{neupen1,neupen2}.  To the author's knowledge, this is the first published record of 
Neumaier's diagrammatic notation. In these papers, focus is placed
on zeroth order scaffolds (scalars) where the edge weights are distance matrices in a metric association
scheme and many classical inequalities for distance-regular graphs are recovered, in a unified fashion, 
through the clever manipulation of inequalities, with a different 
scaling from what we use here.

\subsection{Basic Notation}
\label{Subsec:basics}

Let $X$ be a finite set and let $\Mat_X(\cx)$ denote the vector space of matrices with rows and columns 
indexed by $X$ and entries from the complex numbers. Take $V=\cx^X$ with standard basis 
$\{ \hat{x} \mid x\in X\}$, equipped with the corresponding positive definite Hermitean inner product 
$\langle v,w\rangle = v^\dagger w$ (where $\cdot^\dagger$ denotes conjugate transpose) 
satisfying $\langle \hat{x},\hat{y} \rangle = \delta_{x,y}$ for $x,y\in X$; 
this allows us to identify $V$ with its dual space $V^\dagger$ of linear functionals.  The objects
of study belong to tensor products of this space of the form 
$$  V^{\otimes m} = \underbrace{V \otimes V \otimes \cdots \otimes V}_{m} $$
with standard basis consisting of simple tensors of the form 
$\hat{x}_1 \otimes \hat{x}_2 \otimes \cdots \otimes \hat{x}_m$ where
$x_1,x_2,\ldots, x_m \in X$. When $m=2$, we identify $V \otimes V$ with $\Mat_X(\cx)$ via  
$$M=[M_{xy}]_{x,y} \  \longleftrightarrow \ \sum_{x,y \in X} M_{xy} \ \hat{x} \otimes \hat{y}. $$

Clearly $\Mat_X(\cx)$ forms an algebra 
both under matrix multiplication and under entrywise (Hadamard, or Schur) multiplication, which we denote by 
$\circ$, and contains the identities, $I$ and $J$ respectively, for these two multiplications.  
Any subring 
$\BMA$ of $\Mat_X(\cx)$ acts by multiplication from the left on $V=\cx^X$ and 
$\BMA^{\otimes m}$ acts on $V^{\otimes m}$ component by component:
$$(A_1 \otimes   \cdots   \otimes A_m) ( \bv_1 \otimes \cdots   \otimes\bv_m ) = A_1 \bv_1 \otimes \cdots   
\otimes A_m \bv_m .$$

Our primary example for the vector space $\BMA$ will be  the Bose-Mesner algebra 
$\BMA = \spn_\cx \{ A_0, \ldots, A_d\}$ of a commutative $d$-class association scheme or, relaxing the commutativity condition, a coherent algebra. But the tools here 
clearly extend to other settings, for example where 
$\BMA=\langle A\rangle$  is  the adjacency algebra of a finite simple graph $\Gamma$ with adjacency matrix $A$.

\subsection{Scaffolds (or ``Star-Triangle Diagrams'')}
\label{Subsec:scaffolds}

Suppose we are given
\begin{itemize}
\item A finite (di)graph $G=(V(G),E(G))$ possibly with loops and/or multiple edges, 
the \emph{diagram} of the scaffold\footnote{Note that we write $e=(a,b)$ to indicate that edge $e$ has tail $a$ and
head $b$; this is a slight abuse of notation in the presence of parallel edges.};
\item An ordered subset $R=\{r_1,\ldots,r_m\} \subseteq V(G)$  of ``root'' nodes; or, more generally, a
function $R:[m] \rightarrow V(G)$ with $r_j := R(j)$, In the language of \cite[p39]{lovasz}, $G$, together 
with $R$, is a ``$k$-multilabeled graph'', but we will call $(G,R)$ a \emph{rooted diagram};
\item A finite set $X$ and a map from edges of $G$ to matrices in $\Mat_X(\cx)$: \\
$ w: E(G) \rightarrow \Mat_X(\cx) $ (\emph{edge weights});
\item a subset $F \subseteq V(G)$ of  \emph{fixed nodes} and a fixed function $\varphi_0:F\rightarrow X$
\end{itemize}
The {\em (general) scaffold} $\sS(G, R;w;F,\varphi_0)$ is 
defined as the quantity
\begin{equation}
\label{Eq:scaffdefn}
\sS(G,R; w; F,\varphi_0 ) = \sum_{\substack{ \varphi:V(G) \rightarrow X\\
(\forall a\in F)(\varphi(a)=\varphi_0(a) )}} \quad \left( \prod_{\substack{ e\in E(G)  \\ e=(a,b) }} w(e)_{\varphi(a),\varphi(b)} 
\right) 
 \widehat{\varphi(r_1)} \otimes  \widehat{\varphi(r_2)} \otimes \cdots \otimes  \widehat{\varphi(r_m)}.
\end{equation}
Each function $\varphi:V(G) \rightarrow X$ whose restriction to $F$ is $\varphi_0$ is called a \emph{state} of the
scaffold. Observe that each state itself yields a scaffold with one summand by taking $F=X$ and $\varphi_0 = \varphi$,
namely 
$$ w(\varphi) \  \widehat{\varphi(r_1)} \otimes  \widehat{\varphi(r_2)} \otimes \cdots \otimes  \widehat{\varphi(r_m)}$$
where the  \emph{weight} of $\varphi$ is defined as
$w(\varphi) :=  \displaystyle{\prod_{\substack{ e\in E(G)  \\ e=(a,b) }}} w(e)_{\varphi(a),\varphi(b)}$.

Reversing an arc $e=(a,b)$ in diagram $G$ is equivalent to replacing $w(e)$ by its transpose. In the case where
all edge weights are symmetric matrices, we may treat $G$ as an undirected graph.

The scaffold $\sS(G, R ;w; F,\varphi_0 )$ is an element of $V^{\otimes m}$, so we say 
$\sS(G, R ;w; F,\varphi_0 )$ is a scaffold of {\em order} $m$. 
Scaffolds with $m=0$ are simply complex numbers.
In our discussion, $X$ will always denote the vertex set of some graph or association scheme.
To distinguish $V(G)$ from $X$, we will refer to elements of $V(G)$ as \emph{nodes}.
As the examples above and below show, a scaffold on a small number of nodes can often be concisely
encoded pictorially as an edge-labeled diagram once a convention is established for the ordering 
$r_1,\ldots,r_m$ of root nodes. (Unless  noted explicitly, we assume the $r_j$ are 
distinct; i.e., that the labeling $R$ is injective.) 
Viewed as elements of $\cx^{X^m}$, we may take linear combinations of such scaffolds 
as needed.  We may multiply scaffolds as well: the vector space  
$$ T(X) = \bigoplus_{m=0}^\infty  \cx^{X^m} $$
is viewed as the algebra of tensors by linear extension of the canonical isomorphism 
$$\cx^{X^m} \otimes \cx^{X^p} \rightarrow \cx^{X^{m+p}} . $$ 
For example, if the diagram $G$ is not a connected graph and $G$ can be expressed as a disjoint union of two (not 
necessarily connected) graphs  $H_1$ and $H_2$ where $R = \{r_1,\ldots,r_m\}$ is ordered so that
$R_1 = \{r_1,\ldots,r_\ell\} \subseteq V(H_1)$ and $R_2 = \{r_{\ell+1},\ldots,r_m\} \subseteq V(H_2)$,
then
$$\sS(G, R ;w) = \sS(H_1 , R_1 ;w_1) \otimes \sS(H_2 , R_2; w_2) $$
where $w_j(e)=w(e)$ for edge edge $e$ of $G$.

Fixing the rooted diagram $(G,R)$, the set of all scaffolds $\sS(G, R; w)$ as $w$ varies over all functions from 
$E(G)$ into $\BMA$, we obtain (cf.~\cite{jaeger})  a multilinear map from $\BMA^{\otimes E(G)}$ to $\cx^{X^m}$. 
In Section \ref{Sec:vectorspaces}, we study the images of such maps.

In this paper, with few exceptions,  the set $F$ of fixed nodes is empty. In this case, the expression 
takes the simpler form given in (\ref{Eq:scaffshort}) which we repeat here:

$$\sS(G,  R ;w ) = \sum_{ \varphi:V(G) \rightarrow X} \quad \left( \prod_{\substack{ e\in E(G)  \\ e=(a,b) }} 
w(e)_{\varphi(a),\varphi(b)} \right) \widehat{\varphi(r_1)} \otimes \widehat{\varphi(r_2)} \otimes \cdots  \otimes \widehat{\varphi(r_m)}.  $$
Let us call tensors of this form \emph{symmetric scaffolds} when the distinction is necessary.\footnote{One might consider a more general tensor of this sort as follows. Let $\hat{X}$ be some basis for $\cx^X$ --- one might choose the standard basis as we have done, choose an eigenbasis, or some other basis. Rather than sum over all functions from $V(G)$ to $X$, we may instead sum over all functions from $V(G)$ to  $\hat{X}$ and define
$\sS(G, R;w) = \sum_{\varphi:V(G) \rightarrow \hat{X} } \quad \left( \prod_{\substack{ e\in E(G)  \\ e=(a,b) }} \varphi(a)^\dagger w(e) \varphi(b)  \right) \varphi(r_1) \otimes \cdots \otimes  \varphi(r_m)$.}

In the case where $R = \emptyset$, scaffolds evaluate to scalars and 
we recover Jaeger's definition of a \emph{partition function} in 
\cite[Section 1]{jaeger} and the original counting diagrams of Neumaier (cf.  \cite{neupen1,neupen2}).

\begin{example}
Suppose $A$ is the adjacency matrix of the following graph $\Gamma$ on vertex set $X=\{u,v,w,x,y\}$:
\begin{center}
\input{arxivdiagrams/irregexamp.tex}
\end{center}
and $A'=J-I-A$. Then we have the following third order scaffold with 
$$G=(V(G),E(G)), \qquad V(G) = \{1,2,3,4\}, \qquad  R:[3] \rightarrow V(G), \ r_j=j$$
$$  E(G)=\{(1,2),(2,3),(1,4),(3,4) \}, \qquad w(3,4)=A'$$
and all other edge weights $w(e)=A$:
\begin{center}
\input{arxivdiagrams/irregscaff.tex}
\end{center}
For general scaffolds (when the set $F$ of fixed nodes is non-empty), we label each $a\in F$ by $\varphi_0(a)$ as
indicated in the following examples:
\begin{center}
\input{arxivdiagrams/irregGeneral.tex} \ \ $\Box$
\end{center}
\end{example}


\subsection{Rules for scaffold manipulation}
\label{Subsec:rules}

We start with simple examples in order to illustrate the notation and exhibit elementary 
properties. All edge weights are assumed to be matrices in
$\Mat_X(\cx)$. Here's how we extract the diagonal of a matrix and sum along the diagonal to obtain the trace:

\begin{center}
\input{arxivdiagrams/SimplestScaffold.tex} 
\end{center}

\noindent Note that, when edge weights are taken from  a  Bose-Mesner algebra, where all matrices 
have constant diagonal, loops can always be removed once this scalar is accounted for.

Next, we represent a matrix $N$ as a  second order tensor,  the vector $N \one$ as a first order 
tensor, and the sum of its entries $\one^\top N \one$ as a scalar, where $\one$ denotes the vector in $\cx^X$ having all entries equal to one:

\begin{center}
\input{arxivdiagrams/SingleEdgeScaffolds.tex} 
\end{center}

\noindent Henceforth, we identify the matrix $N\in \Mat_X(\cx)$ with the second order tensor 
$\displaystyle{\sum_{x,y\in X}} N_{xy} \hat{x}\otimes \hat{y}$.

A special case occurs when $G$ is an edgeless graph; if $|V(G)|=n$ and $|R|=m$, then the corresponding
scaffold is $ |X|^{n-m} \sum_{x_1,\ldots,x_m \in X} \hat{x}_1 \otimes \hat{x}_2 \otimes \cdots \otimes \hat{x}_m$.
For example, when $n=m=2$, we obtain the all ones matrix $J$.

The fundamental scaffold identities below show that matrix product and Schur/entrywise product correspond,
respectively, to series and parallel reductions on diagrams:

\input{arxivdiagrams/twoedge.tex}    

Note that the first identity is only guaranteed to hold when the middle (hollow) node on the top left is incident 
only to the two edges shown. We present these identities, denoted in Appendix A as $\sSR{1}$ and $\sSR{1'}$, 
in the following lemma.

\begin{lemma}
\label{Lem:seriespar}
Let $G=(V(G),E(G))$ with $e_1,e_2\in E(G)$ where $e_1=(a_1,b_1)$ and $e_2=(a_2,b_2)$. Let 
$$E'= \left( E(G) \setminus \{ e_1, e_2 \} \right) \cup \{ e'=(a_1,b_2) \} $$
and $G'=(V(G),E')$.
\begin{itemize}
\item[(i)] If $e_1$ and $e_2$ are in series, i.e., $b_1=a_2$ and no other edge is incident to $b_1$, then
$$ \sS(G, R ;w) = \sS(G', R; w' )$$
whenever $b_1 \not\in R$, where $w'(e') = w(e_1) w(e_2)$ and $w'(e)=w(e)$ otherwise.
\item[(ii)] If $e_1$ and $e_2$ are in parallel, i.e., $a_1=a_2$ and $b_1=b_2$, then
$$ \sS(G, R ; w) = \sS(G' ,  R ; w' )$$
where $w'(e') = w(e_1) \circ w(e_2)$ and $w'(e)=w(e)$ otherwise. \ $\Box$
\end{itemize}
\end{lemma}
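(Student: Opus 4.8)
The plan is to derive both identities directly from the defining sum~(\ref{Eq:scaffdefn}), regrouping only the scalar weight while leaving the tensor factor untouched. The crucial structural observation is that $G$ and $G'$ carry the same root sequence $R$, so for any state $\varphi$ the tensor $\widehat{\varphi(r_1)}\otimes\cdots\otimes\widehat{\varphi(r_m)}$ occurring in $\sS(G,R;w)$ is identical to the one occurring in $\sS(G',R;w')$. Hence it suffices to match the scalar coefficients $\prod_{e} w(e)_{\varphi(a),\varphi(b)}$.

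For part~(ii) I would first note that $V(G')=V(G)$, so a function $\varphi:V(G)\to X$ is a state of $G$ exactly when it is a state of $G'$. Fixing such a $\varphi$, every edge other than $e_1,e_2,e'$ contributes the same factor on both sides. Since $a_1=a_2$ and $b_1=b_2$, the two edges $e_1,e_2$ of $G$ together contribute
\[
w(e_1)_{\varphi(a_1),\varphi(b_1)}\, w(e_2)_{\varphi(a_1),\varphi(b_1)}
  = \bigl(w(e_1)\circ w(e_2)\bigr)_{\varphi(a_1),\varphi(b_1)}
  = w'(e')_{\varphi(a_1),\varphi(b_1)}
\]
by the definition of the Hadamard product, which is exactly the single factor contributed by $e'$ in $G'$. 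Thus the coefficients agree state-by-state and the two scaffolds coincide.

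For part~(i) the node $b_1=a_2$ is eliminated, so I would organise the sum by first fixing $\varphi$ on every node except $b_1$ and then summing over $t:=\varphi(b_1)\in X$. The hypotheses enter precisely here: because $b_1\notin R$, the tensor factor does not depend on $t$; and because $e_1$ (entering $b_1$) and $e_2$ (leaving $b_1$) are the only edges incident to $b_1$, the only weight factors involving $t$ are $w(e_1)_{\varphi(a_1),t}$ and $w(e_2)_{t,\varphi(b_2)}$. Carrying out the inner sum,
\[
\sum_{t\in X} w(e_1)_{\varphi(a_1),t}\, w(e_2)_{t,\varphi(b_2)}
  = \bigl(w(e_1)w(e_2)\bigr)_{\varphi(a_1),\varphi(b_2)}
  = w'(e')_{\varphi(a_1),\varphi(b_2)},
\]
recognising the matrix product as the weight of $e'$ in $G'$. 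All remaining factors are unchanged, giving $\sS(G,R;w)=\sS(G',R;w')$.

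The one point I would flag carefully is the bookkeeping for the eliminated node in part~(i). Once $b_1$ has been used to form the matrix product it is isolated, and since each non-root isolated node contributes a free factor of $|X|$ to a scaffold (as recorded in the edgeless-graph remark preceding the lemma), the stated equality holds precisely when $b_1$ is understood to be dropped from the node set of $G'$ rather than retained as an isolated node; I would make this convention explicit. Beyond that there is no real obstacle: part~(ii) is an immediate rewriting, and part~(i) is a single application of the matrix-multiplication formula $\sum_{t} M_{xt}N_{ty}=(MN)_{xy}$, with the two hypotheses on $b_1$ being exactly what license pulling the $t$-sum through to form that product.
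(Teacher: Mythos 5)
Your proof is correct and is essentially the argument the paper itself gives: the lemma is stated with no separate proof beyond the displayed computation immediately preceding it, which is exactly your coefficient-matching for the parallel case and your summation $\sum_{t} w(e_1)_{\varphi(a_1),t}\,w(e_2)_{t,\varphi(b_2)}=(w(e_1)w(e_2))_{\varphi(a_1),\varphi(b_2)}$ over the suppressed middle node for the series case. Your flagged bookkeeping point is a legitimate catch rather than a gap: as literally written, $G'=(V(G),E')$ retains $b_1$ as an isolated non-root node, which would scale $\sS(G',R;w')$ by an extra factor of $|X|$, so the intended convention is indeed that $b_1$ is deleted from $V(G')$, and making that explicit sharpens the statement.
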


Another basic lemma:
\begin{lemma}
\label{Lem:trivial}
Let $G=(V(G),E(G))$ with $e=(a,b) \in E(G)$.
\begin{itemize}
\item[(i)] Let $G'$ be obtained from $G$ by deleting edge $e'$. If $w(e')=J$, 
then $\sS(G , R ; w)= \sS(G' , R ; w')$ where
$w'(e)=w(e)$ for all $e \in E(G) \setminus \{e'\}$.
\item[(ii)] Let $G'$ be obtained from $G$ by contracting edge $e'$:  for $f(a)=f(b)=a'$ 
and $f(c)=c$ otherwise,
$$V(G')= \left( V(G) \setminus \{a,b\} \right) \cup \{a'\}, \quad E(G') = \left\{ (f(u),f(v)) 
\mid (u,v) \in E(G) \right\}$$
if $w(e')=I$ and either $a\not\in R$ or $b\not\in R$, then $\sS(G , R ; w)= 
\sS(G' , R' ; w')$ where $w'( f(u),f(v) )=w(u,v)$ for all $e =(u,v) \in E(G) \setminus 
\{e'\}$, $R'=R$ if $a,b\not\in R$ and $R'=R\cup \{a'\}$ otherwise. \ $\Box$
\end{itemize}
\end{lemma}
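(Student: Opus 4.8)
The plan is to prove both parts by directly unwinding the state-sum definition \eqref{Eq:scaffshort}; each part isolates how a single distinguished matrix ($J$ in (i), $I$ in (ii)) interacts with the weight $w(\varphi) = \prod_{e=(a,b)} w(e)_{\varphi(a),\varphi(b)}$ of a state.

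For part (i) I would observe that since every entry of $J$ equals $1$, the factor contributed by the deleted edge is $J_{\varphi(a),\varphi(b)} = 1$ for \emph{every} state $\varphi$. Because $G$ and $G'$ share the same node set and the same rooting $R$, the states of the two scaffolds coincide, and for each such $\varphi$ the weight $w(\varphi)$ differs from its $G'$-counterpart only by this factor of $1$. Hence the two scaffolds agree summand by summand, and (i) follows at once.

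Part (ii) is where the actual content lies, and the key observation is that $I$ behaves as a Kronecker delta: $I_{\varphi(a),\varphi(b)} = \delta_{\varphi(a),\varphi(b)}$, so $w(\varphi) = 0$ unless $\varphi(a) = \varphi(b)$. Thus the sum defining $\sS(G,R;w)$ ranges effectively only over states that are constant on $\{a,b\}$. I would then set up the bijection between these states and the states of $G'$: a state $\varphi$ of $G$ with $\varphi(a) = \varphi(b)$ corresponds to the state $\psi$ of $G'$ determined by $\psi(a') = \varphi(a) = \varphi(b)$ and $\psi(c) = \varphi(c)$ for $c \neq a, b$, equivalently $\psi \circ f = \varphi$. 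The heart of the verification is that weights and root-tensors transport correctly across this bijection: for each surviving edge $e = (u,v) \in E(G) \setminus \{e'\}$ one has $w'(f(u),f(v))_{\psi(f(u)),\psi(f(v))} = w(e)_{\varphi(u),\varphi(v)}$ using $\psi \circ f = \varphi$ and $w'(f(u),f(v)) = w(e)$, while the factor from $e'$ itself is the suppressed $1$; and $\widehat{\psi(R'(j))} = \widehat{\varphi(r_j)}$ for each $j$ because $R' = f \circ R$. Summing over the bijection yields $\sS(G,R;w) = \sS(G',R';w')$.

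The main point requiring care — more a bookkeeping subtlety than a genuine obstacle — is the treatment of the roots, and this is exactly where the hypothesis ``either $a \notin R$ or $b \notin R$'' enters. If both $a$ and $b$ were roots, the merge $f$ would identify two distinct coordinates of the output tensor, dropping its order from $m$ to $m-1$ and breaking the term-by-term correspondence; forbidding this guarantees that $f$ is injective on $R$, so that $R' = f \circ R$ is again an injective labelling of length $m$ and both scaffolds live in the same space $V^{\otimes m}$. I would also record the minor point that contracting $e'$ formally produces a self-loop $(a',a')$ of weight $I$, which contributes the constant factor $I_{\psi(a'),\psi(a')} = 1$ and may therefore be deleted without affecting the value; this reconciles the edge set $E(G')$ with the fact that $w'$ is specified only on the images of $E(G) \setminus \{e'\}$.
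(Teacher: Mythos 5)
Your proof is correct, and it is the intended argument: the paper states this lemma with an immediate $\Box$ and no written proof, treating it as a routine unwinding of the definition, which is exactly what you carry out. Your handling of the two side issues — why the hypothesis ``$a\not\in R$ or $b\not\in R$'' is needed to keep both tensors in $V^{\otimes m}$, and the suppressed self-loop of weight $I$ created by the contraction — is accurate and fills in details the paper leaves implicit.
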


This second lemma provides us with two more basic moves, $\sSR{0}$ and $\sSR{0'}$ in our 
notation, that preserve scaffolds:
\begin{itemize}
\item  Split a node in two, introducing a hollow node, mapping the new edge to $I$ \\
\input{arxivdiagrams/splitnode.tex} \\
\noindent Note that the edges incident to the original node may be distributed among the two nodes on the right
in any fashion.
\item  Insert an edge anywhere, between two existing nodes, mapping the new edge to $J$ \\
\input{arxivdiagrams/insertJ1.tex}  \hspace{.4in}
or  \hspace{.4in}
\input{arxivdiagrams/insertJ2.tex}  \hspace{.2in} . 
\end{itemize}
In the case where edge weights belong to a Bose-Mesner algebra, these steps are useful in conjunction 
with the linear expansions $I = \sum_j E_j$ and $J= \sum_i A_i$, respectively, which allow us to expand one scaffold 
as a linear combination of closely related scaffolds.

It is a simple exercise to show that \ \  \input{arxivdiagrams/rrw.tex} \ \  is equal to $M \circ (JN^\top)$.
More generally, if our rooted diagram contains a hollow node of degree one and the incident edge weight has constant 
row and column sum, we may simplify the scaffold by 
deleting this node and scaling the resulting tensor by that constant (Rule $\sSR{9}$).

\begin{lemma}
\label{Lem:degreeonehollow}
Let $G=(V(G),E(G))$ be a digraph, $w:E(G) \rightarrow \Mat_X(\cx)$, 
$R:[m]\rightarrow V(G)$ an injection, $a_0 \in V(G)$ not in the image of $R$ and 
incident to just one edge in $E(G)$ (say $e=(a_0,b_0)$ or $e=(b_0,a_0)$). Let $M=w(e)$. Denote by $G'$ the graph
obtained from $G$ by deletion of $a_0$ and $e$; denote by $w'$ the restriction of $w$ to edges other than $e$.
\begin{itemize}
\item[(i)]  if $e=(b_0,a_0)$ and $M$ has constant row sum $\alpha$, then
$\sS( G , R ; w) = \alpha \sS( G' , R ; w')$;
\item[(ii)]  if $e=(a_0,b_0)$ and $M$ has constant column sum $\alpha$, then
$\sS( G , R ; w) = \alpha \sS( G' , R ; w')$.  $\Box$
\end{itemize}
\end{lemma}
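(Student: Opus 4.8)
The plan is to prove this directly from the definition (\ref{Eq:scaffshort}) by isolating the summation over the single coordinate $\varphi(a_0)$, which is a \emph{free} coordinate precisely because $a_0$ is not a root. The two cases are interchanged by reversing the arc $e$: as noted above, reversing $e$ replaces $w(e)$ by its transpose, and the column sum of $M^\top$ equals the row sum of $M$, so case (i) for the configuration $e=(b_0,a_0)$ with matrix $M$ is identical to case (ii) for $e=(a_0,b_0)$ with matrix $M^\top$. Hence it suffices to treat case (i), where $e=(b_0,a_0)$ and $M=w(e)$ has constant row sum $\alpha$.

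First I would parametrize each state $\varphi:V(G)\rightarrow X$ by the pair $(\varphi',x_0)$, where $\varphi':=\varphi|_{V(G')}$ is its restriction to $V(G')=V(G)\setminus\{a_0\}$ and $x_0:=\varphi(a_0)\in X$; this is a bijection between states of $G$ and pairs consisting of a state $\varphi'$ of $G'$ together with a choice of $x_0\in X$. Because $R$ is injective and $a_0$ lies outside its image, every root $r_j$ belongs to $V(G')$, so the tensor factor $\widehat{\varphi(r_1)}\otimes\cdots\otimes\widehat{\varphi(r_m)}=\widehat{\varphi'(r_1)}\otimes\cdots\otimes\widehat{\varphi'(r_m)}$ depends only on $\varphi'$ and not on $x_0$. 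Next I would split the edge product: the hypothesis that $a_0$ is incident to the single edge $e$ guarantees that $x_0=\varphi(a_0)$ appears in exactly one factor of $\prod_{e'}w(e')_{\varphi(a),\varphi(b)}$, namely $w(e)_{\varphi(b_0),\varphi(a_0)}=M_{\varphi'(b_0),\,x_0}$, while every remaining factor is $w'(e')_{\varphi'(a),\varphi'(b)}$ and is independent of $x_0$.

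The argument then concludes by reorganizing the resulting finite double sum: pulling the $\varphi'$-dependent data (the tensor and the product over edges of $G'$) outside the inner summation over $x_0$, the inner sum $\sum_{x_0\in X}M_{\varphi'(b_0),\,x_0}$ is the row sum of $M$ in row $\varphi'(b_0)$, which equals $\alpha$ for \emph{every} value of $\varphi'(b_0)$ by hypothesis. Factoring this constant out of the sum leaves exactly $\alpha\,\sS(G',R;w')$, as desired. I do not expect a genuine obstacle here; the content is a bookkeeping interchange of summation, and the only points demanding care are the two hypotheses themselves. If $a_0$ were a root, the tensor would depend on $x_0$ and the inner sum could not be extracted; and if $a_0$ had degree greater than one, more than one matrix entry would depend on $x_0$, so the inner sum would no longer collapse to the scalar row sum $\alpha$. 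Thus the statement is, in effect, the sharpest form under which the degree-one hollow node can be absorbed into a scalar.
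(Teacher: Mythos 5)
Your argument is correct, and it is precisely the routine computation the paper has in mind: the lemma is stated with the proof omitted (the $\Box$ appears at the end of the statement), so the intended justification is exactly this bookkeeping — split each state $\varphi$ into $(\varphi',x_0)$, observe that only the single factor $w(e)$ involves $x_0$ and that the tensor part is independent of $x_0$ since $a_0$ is not a root, and collapse the inner sum over $x_0$ to the constant row (resp.\ column) sum $\alpha$. Your reduction of case (ii) to case (i) via arc reversal and transposition is also consistent with the paper's stated convention, so nothing further is needed.
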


General scaffolds become useful when one's investigation differentiates individual vertices 
of an object or when one wishes to expand an expression as a sum over vertices. If 
$$\scaft = \sS( G , R ; w)$$ 
is a symmetric scaffold with edge weights in $\Mat_X(\cx)$ and $r\in R$, define, for $x\in X$,
$$\scaft_x = \sS( G , R ; w; \{r\}, \varphi_x)$$
where $\varphi_x(r)=x$. Then we have $\scaft = \displaystyle{\sum_{x\in X}} \  \scaft_x$:
\begin{center}
\input{arxivdiagrams/splitoververtices.tex}
\end{center}

Using the notion of a general scaffold, we obtain a straightforward proof that our manipulations of 
subdiagrams are valid operations on overall diagrams. Let $G=(V(G),E(G))$ and $H=(V(H),E(H))$ be
digraphs with $V(G) \cap V(H) = \{u_1,\ldots, u_\ell\}$, or with a bijection $\xi$ pairing $\ell$ nodes $u_i$ of $G$
to $\ell$ nodes $r_i$ of $H$ (e.g, $\xi$ is the identity function in the simple case). Writing $\xi(u_i)=r_i$, 
define $G +_\xi H$ to be the digraph with vertex set
$$ V(G +_\xi H) = \left(  V(G)  \setminus \{ u_1,\ldots u_\ell\} \right) \cup V(H) $$
and edges
$$ E(H) \cup  \{ (\xi(a),\xi(b)) \mid (a,b) \in E(G) \} $$
where we extend $\xi$  to $V(G)$ defining $\xi(c)=c$ for $c\not\in \{u_1,\ldots,u_\ell\}$. If $G$ and $H$ are 
rooted diagrams, we define $\hat{R}$ to be the union of their respective sets of root nodes, replacing $u_i$
by $\xi(u_i)$ whenever $u_i$ is a root node.

\begin{proposition}
\label{Prop:glue}
 If  $\scaft_1$ and $\scaft_2$ are symmetric scaffolds on the same set $R$ of roots such 
that $\scaft_1 = \scaft_2$ and $R' =\{r_1,\ldots,r_\ell\} \subseteq R$, then for any scaffold $\scafs$ and any 
root nodes $u_1,\ldots,u_\ell$ in the rooted diagram of $\scafs$, we have $\scafs +_\xi \scaft_1 = \scafs +_\xi \scaft_2$
where $\xi(u_i) = r_i$.
\end{proposition}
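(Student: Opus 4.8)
The plan is to evaluate both sides through the defining state-sum (\ref{Eq:scaffdefn}) after pinning the values of the shared nodes, and then to convert the hypothesis $\scaft_1=\scaft_2$ into an equality of ``coefficient tensors.'' Write $\scafs=\sS(G,R_{\scafs};w_{\scafs})$ and $\scaft_i=\sS(H_i,R;w_i)$ for $i=1,2$, so that $u_1,\dots,u_\ell\in R_{\scafs}$ and the shared roots $r_1,\dots,r_\ell$ lie in $R$, with $\scaft_1$ and $\scaft_2$ sharing the same root set $R$ and ordering. By the construction of $G+_\xi H_i$, its edge set is the disjoint union of a $\xi$-relabelled copy of $E(G)$ with $E(H_i)$, and the only nodes common to the two halves are $r_1,\dots,r_\ell$ (the $\xi$-images of $u_1,\dots,u_\ell$). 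Hence for any state $\psi$ of the glued diagram the weight factors as $w(\psi)=w_{\scafs}(\psi\circ\xi)\cdot w_i(\psi|_{V(H_i)})$, since each edge lies in exactly one half and its contribution depends only on the endpoint values within that half.

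First I would freeze the common values. Iterating the decomposition $\scaft=\sum_{x\in X}\scaft_x$ recorded just above the proposition over all $\ell$ shared roots, I expand $\scaft_i$ in the standard basis of the $\ell$ shared tensor slots, producing coefficient tensors $\mathbf{a}_i(x_1,\dots,x_\ell)\in V^{\otimes(m-\ell)}$ (the scaffold on $H_i$ with $r_1,\dots,r_\ell$ pinned to $x_1,\dots,x_\ell$, output taken over the remaining roots of $R$) so that
\begin{equation*}
\scaft_i=\sum_{x_1,\dots,x_\ell\in X}\bigl(\hat{x}_1\otimes\cdots\otimes\hat{x}_\ell\bigr)\otimes \mathbf{a}_i(x_1,\dots,x_\ell).
\end{equation*}
Let $\mathbf{b}(x_1,\dots,x_\ell)\in V^{\otimes p}$ denote the general scaffold on $G$ with $u_1,\dots,u_\ell$ pinned to $x_1,\dots,x_\ell$, output taken only over the remaining roots $v_1,\dots,v_p$ of $\scafs$. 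Partitioning the states of $G+_\xi H_i$ according to the common values $\psi(r_j)=x_j$ and invoking the weight factorization, the two halves sum independently once these values are frozen, so (up to the fixed permutation $\pi$ of tensor slots dictated by the ordering convention on $\hat R$)
\begin{equation*}
\scafs+_\xi\scaft_i=\pi\!\left(\sum_{x_1,\dots,x_\ell\in X}\bigl(\hat{x}_1\otimes\cdots\otimes\hat{x}_\ell\bigr)\otimes \mathbf{a}_i(x_1,\dots,x_\ell)\otimes \mathbf{b}(x_1,\dots,x_\ell)\right).
\end{equation*}

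The conclusion is then immediate from linear independence. The tensors $\hat{x}_1\otimes\cdots\otimes\hat{x}_\ell$ form part of the standard basis of $V^{\otimes\ell}$, so the hypothesis $\scaft_1=\scaft_2$ forces $\mathbf{a}_1(x_1,\dots,x_\ell)=\mathbf{a}_2(x_1,\dots,x_\ell)$ for every tuple $(x_1,\dots,x_\ell)\in X^\ell$. Since $\mathbf{b}$ and $\pi$ depend only on $\scafs$ and $\xi$, substituting $\mathbf{a}_1=\mathbf{a}_2$ term by term into the displayed formula yields $\scafs+_\xi\scaft_1=\scafs+_\xi\scaft_2$. I expect the only real work to lie in the middle step: rigorously verifying that freezing the $\ell$ shared values genuinely decouples the two state-sums (this uses precisely that $r_1,\dots,r_\ell$ are the only shared nodes and that no edge straddles the two halves), and tracking the fixed permutation $\pi$ of tensor factors so that the $\mathbf{a}_i$- and $\mathbf{b}$-slots align identically for $i=1$ and $i=2$. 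Everything else is routine bookkeeping with (\ref{Eq:scaffdefn}).
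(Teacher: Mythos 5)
Your proof is correct and follows essentially the same route as the paper's: pin the shared roots $r_1,\dots,r_\ell$ to each tuple $(x_1,\dots,x_\ell)$, observe that the state sum of the glued diagram factors across the two halves once these values are frozen (so the pinned glued scaffold is determined by the pinned pieces), deduce from $\scaft_1=\scaft_2$ that the pinned versions agree for every tuple, and re-sum over all tuples. Your write-up is somewhat more explicit about the weight factorization and the linear-independence step that extracts $\mathbf{a}_1=\mathbf{a}_2$, but the underlying argument is the one in the paper.
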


\begin{proof}
Write $\scafs = \sS( G, P; w)$, $\scaft_j = \sS( H_j, R; w_j )$ for $j=1,2$.
For any $x_1,\ldots,x_\ell \in X$, consider $\phi_0: r_i \mapsto x_i$ and $F=\{u_1,\ldots,u_\ell\}$. It is easy to
see that 
$$\sS(  H_1, R; w_1;  R', \phi_0) = \sS(  H_2, R; w_2;  R', \phi_0) $$
and $\sS(  G, P; w ;  \xi^{-1}(R'), \phi_0 \circ \xi )$  is obviously equal to itself. Writing $\hat{R} = \{r_1,\ldots,r_m\}$, 
we have
$ \displaystyle{ \sum_{\substack{ \varphi: V(G +_\xi H_1) \rightarrow X\\
(\forall a\in R')(\varphi(a)=\varphi_0(a) )}} \quad \left( \prod_{\substack{ e\in E(G +_\xi H_1)  \\ e=(a,b) }} 
w(e)_{\varphi(a),\varphi(b)}  \right) }
 \widehat{\varphi(r_1)} \otimes  \widehat{\varphi(r_2)} \otimes \cdots \otimes  \widehat{\varphi(r_m)}$
 
\ \ \ $ = \displaystyle{ \sum_{\substack{ \varphi: V(G +_\xi H_2) \rightarrow X\\
(\forall a\in R')(\varphi(a)=\varphi_0(a) )}} \quad \left( \prod_{\substack{ e\in E(G +_\xi H_2)  \\ e=(a,b) }} 
w(e)_{\varphi(a),\varphi(b)}  \right) }
 \widehat{\varphi(r_1)} \otimes  \widehat{\varphi(r_2)} \otimes \cdots \otimes  \widehat{\varphi(r_m)}$
 
\noindent since corresponding coefficients are equal. Now summing the first of these expressions over all possible choices of $x_1,\ldots,x_\ell \in X$ gives us $\scafs +_\xi \scaft_1$ and summing the second over all possible 
choices of $x_1,\ldots,x_\ell \in X$ gives us $\scafs +_\xi \scaft_2$.  $\Box$
\end{proof}

This tool is denoted $\sSR{5}$ in Appendix A. Together with our ability to convert solid nodes to hollow nodes 
while preserving scaffold equality (Lemma \ref{Lem:orderred}, $\sSR{10}$), this allows us to make local 
moves on scaffolds.

%
%
%
\section{Broad utility of diagrams}
\label{Sec:otherstuff}  

Before we delve into the main line of investigation, namely computations in Bose-Mesner algebras, we 
illustrate the broader utility of Neumaier's diagrams by mentioning two other active areas of combinatorics: homomorphism densities and spin models. The point here is that concepts closely related to these 
have been used for quite some time in a variety of areas close to combinatorics.

\subsection{Counting homomorphisms} 
\label{Subsec:homomorphisms}  

Throughout this section, let $\Gamma$ denote a finite simple graph with adjacency matrix $A$. For a graph 
$G$, let $\Hom(G,\Gamma)$ denote the set of all graph homomorphisms $\varphi: G \rightarrow \Gamma$. For 
$u_1,\ldots, u_m \in V(G)$ and $v_1,\ldots, v_m \in V(\Gamma)$, denote by 
$$ \Hom_{u_1,\ldots,u_m}^{v_1,\ldots,v_m}(G,\Gamma)$$
the set of $\varphi \in \Hom(G,\Gamma)$  with $\varphi(u_i)=v_i$. for all $i$. Then with $w(e)=A$ for all
$e\in E(\Gamma)$, we have 
$$\sS(G , \emptyset ; w) = \left|  \Hom(G,\Gamma)\right| ,$$
$$ \sS(G , \{u_1,\ldots,u_m \} ; w) = \sum_{v_1,\ldots,v_m} \left| \Hom_{u_1,\ldots,u_m}^{v_1,\ldots,v_m}(G,\Gamma)\right|
\hat{v}_1 \otimes \cdots \otimes \hat{v}_m $$
and, defining $\varphi_0(u_i)=v_i$ for each $i$,
$$ \sS(G , \emptyset ; w; \{u_1,\ldots,u_m \},\varphi_0) = 
\left| \Hom_{u_1,\ldots,u_m}^{v_1,\ldots,v_m}(G,\Gamma)\right|
~ . $$
For example, $\sS(K_3 , \emptyset ; A ) =$ \input{arxivdiagrams/hollowtriangle.tex} counts triangles in $\Gamma$ while \\
 \hspace*{1.5in} \input{arxivdiagrams/markedtriangle.tex} \vspace{-0.65in} 
 
 $$  \phantom{XXXXX} = \displaystyle{\sum_{(v_1,v_2)\in V\Gamma} }\left| \Gamma(v_1)\cap \Gamma(v_2) \right| \hat{v}_1 \otimes \hat{v}_2 $$
records, for each pair of vertices, their number of common neighbors.

The \emph{homomorphism density} $\mathsf{t}(G,\Gamma)$ of $G$ into $\Gamma$ is defined as the probability
that a function from $V(G)$ to $V\Gamma$, chosen uniformly at random, is a graph homomorphism
$$  \mathsf{t}(G,\Gamma) = \left|  \Hom(G,\Gamma) \right|/ |V\Gamma|^{|V(G)|}. $$
Lov\'{a}sz's  theory of graphons \cite{lovasz} includes a substantial set of tools for computing 
homomorphism densities, with powerful 
applications. We note here that our diagrammatic notation differs from that of Lov\'{a}sz slightly.

By embedding $G$ in a complete graph on $|V(G)|$ nodes, we obtain added flexibility. Edges of the new 
scaffold diagram which are not edges of $G$ may be labelled with $J-I$ to count injections from $G$ into $\Gamma$, or with $J-I-A$ to
count induced subgraphs of $\Gamma$ which are isomorphic to $G$.

\subsection{Partition functions and spin models} 
\label{Subsec:spinmodels}

If $X$ is a finite set of colors (or ``spins''), we may employ a partition function to assign a complex number to each link diagram. The concept of a \emph{spin model} plays a key role here in determining which partition
functions are link invariants.

A spin model \cite[Prop.~1]{jaeger} is a triple $(X,W^+,W^-)$ where $X$ is a finite set and matrices 
$W^+,W^- \in \Mat_X(\cx)$ satisfy, for $D^2=|X|$ and some scalar $\alpha$, 
\begin{itemize}
\item (``Type I relation'' for $W^+$) $W^+$ has  constant diagonal $\alpha$,  constant row sum $D \alpha^{-1}$, and constant column sum $D \alpha^{-1}$;
\item  (``Type I relation'' for $W^-$) $W^-$ has constant diagonal $\alpha^{-1}$,  constant row sum $D \alpha$, and constant column sum $D \alpha$; 
\item  (``Type II relation'') $W^+ \circ (W^-)^\top = J$ while $W^- W^+ = W^+ W^- = |X| I$
\item ( ``Type III'' or``Star-Triangle Relation'') for every $a,b,c\in X$,
$$ \sum_{x \in X} (W^+)_{x,a}  (W^+)_{x,b}  (W^-)_{c,x} = D \  (W^+)_{a,b} (W^-)_{b,c}(W^-)_{c,a} $$
\end{itemize}

In scaffold formalism, these conditions are written
\begin{center}
\input{arxivdiagrams/spin_model.tex}
\end{center}

\begin{example}
The \emph{cyclic spin model} introduced by Goldschmidt and Jones 
is given by $X = \ints_n$ and $W^+_{a,b} = \omega^{(a-b)^2}$ where $\omega$ is a primitive complex  $n^{\rm th}$ root
of unity for $n$ odd. For $n=5$ with $\omega=e^{2\pi i/5}$, one may readily check that 
$$ W^+ = \left[ \begin{array}{ccccc}
       1            &      \omega    &  \bar{\omega}   &  \bar{\omega} &       \omega \\
 \omega        &          1          &       \omega      &  \bar{\omega} &  \bar{\omega}  \\
 \bar{\omega}&     \omega     &           1            &       \omega    &  \bar{\omega}  \\
 \bar{\omega}& \bar{\omega}  &      \omega      &             1         &      \omega  \\
   \omega     & \bar{\omega}   &  \bar{\omega}  &        \omega    &           1  \end{array} \right] \qquad \mbox{and} \qquad
W^- = \left[ \begin{array}{ccccc}
       1            &      \bar{\omega}    &  \omega   &  \omega &       \bar{\omega} \\
 \bar{\omega}        &          1          &       \bar{\omega }     &  \omega &  \omega  \\
 \omega&     \bar{\omega}     &           1            &       \bar{\omega}    &  \omega  \\
 \omega & \omega  &      \bar{\omega}      &             1         &      \bar{\omega}  \\
  \bar{ \omega}     & \omega   &  \omega  &        \bar{\omega}    &           1  \end{array} \right]$$
satisfy the above relations of Type I, II and III.
\end{example}

In \cite{gitlerlopez}, Gitler and L\'{o}pez investigate a generalization of the star-triangle relation exploring connections
between highly regular association schemes and spin models.

%
%
%
\section{Association schemes}  
\label{Sec:schemes}

A {\em (commutative) association scheme} \cite{del,banito,bcn,godsil}  
consists of a finite set $X$ together with a collection
$\cR=\{R_0,\ldots,R_d\}$ of binary relations (the \emph{basis relations}) on $X$ satisfying the following
conditions:
\begin{itemize}
\item[(i)] some relation in $\cR$ is the identity relation on $X$; 
we denote this relation $R_0$;
\item[(ii)] $R_i \cap R_j = \emptyset$ whenever $i\neq j$;
\item[(iii)] $R_0 \cup \cdots \cup R_d = X\times X$;
\item[(iv)] for each $i$, the relation
$R_i^\top = \left\{ (b,a) : (a,b) \in R_i \right\} $ also belongs to $\cR$;
\item[(v)] there  are \emph{intersection numbers} $p_{ij}^k$ ($0\le i,j,k\le d$)
such that, whenever $(a,b)\in R_k$, we have exactly $p_{ij}^k$
elements $c\in X$ for which both $(a,c)\in R_i$ and $(c,b)\in R_j$;
\item[(vi)] for each $i$, $j$ and $k$, $p_{ij}^k = p_{ji}^k$. 
\end{itemize}
An association scheme is \emph{symmetric} if $R_i^\top = R_i$ for all $R_i \in \cR$.

Let $(X,\cR)$ be an association scheme   with basis relations  $\cR=\{R_0,R_1,\ldots, R_d\}$ having adjacency matrices $A_0,A_1,\ldots, A_d$  respectively. It is well known  \cite[Thm.~2.6.1]{bcn} that  the vector space $\BMA$ 
spanned by these $d+1$ matrices  is a Bose-Mesner algebra\footnote{I.e., the vector space is closed under 
transpose, closed under conjugation, closed and commutative under both ordinary and  entrywise 
multiplication, and contains the identities, $I$ and $J$,  respectively, for these two operations.}  
and that  $\{ A_0,A_1,\ldots, A_d\}$ forms a basis of pairwise orthogonal 
idempotents with respect to the entrywise product: $A_i \circ A_j = \delta_{i,j} A_i$. By convention, 
we have $A_0=I$ and the unique basis of orthogonal idempotents with respect to ordinary matrix 
multiplication is denoted by $\{E_0,E_1,\ldots, E_d\}$ with $E_0 = \frac1{|X|} J$. It follows that there exist
structure constants, called \emph{Krein parameters}, $q_{ij}^k$ for which
$$ E_i \circ E_j = \frac{1}{|X|} \sum_{k=0}^d q_{ij}^k E_k $$
for $0\le i,j\le d$. Note that, for each $j \in \{0,\ldots,d\}$, there is some 
$j' \in \{0,\ldots,d\}$ with $E_{j'}=E_j^\top=\bar{E}_j$. 
The two bases $\{A_0,\ldots, A_d\}$ and $\{E_0,\ldots,E_d\}$ for algebra $\BMA$ are related by
the \emph{first and second eigenmatrices} $P$ and $Q$ defined by 
$$ A_i = \sum_{j=0}^d P_{ji} E_j \qquad E_j = \frac{1}{|X|} \sum_{i=0}^d Q_{ij} A_i. $$
These satisfy the \emph{orthogonality relations} \cite[Sec.~2.2]{bcn} $PQ=|X| I$ and $m_j P_{ji} = v_i Q_{ij}$
where $m_j = q_{jj}^0$ is the rank of $E_j$ and $v_i = p_{ii}^0$ is the valency of the graph $(X,R_i)$ corresponding
to the $i^{\rm th}$ basis relation.

Let us first consider scaffolds $\sS(G , R ;w)$ 
where $w(e)\in \{A_0,\ldots, A_d\}$ for every edge $e$ in $G$. Note that we are abusing our conventions in that the set $X$ is not specified: when an identity is given, we mean that it holds true (under the given hypotheses) 
for any association scheme $(X,\cR)$ whose Bose-Mesner $\BMA \subseteq \Mat_X(\cx)$ contains all the 
edge weights under standard naming conventions for $A_0,\ldots,A_d,E_0,\ldots,E_d$.

The most basic examples of scaffold identities are easily verified: \\
\input{arxivdiagrams/vi.tex} \\
where $v_i=p_{ii}^0$ is the valency of the graph corresponding to the $i^{\rm th}$ basis relation 
$R_i$ of the scheme, and \\
\input{arxivdiagrams/firstpijk.tex} \\
where $p_{ij}^k$ is the intersection number defined above.  The first fundamental relations we encounter are given by the following lemma, recorded in our appendix as rules $\sSR{2}$ and $\sSR{2'}$, respectively. 
The first of these statements is obvious and one may derive short proofs for the second \cite{cgs}. But we will defer these  proofs to Theorem \ref{Thm:DeltaWyebases}  where more is accomplished.

\begin{lemma}
\label{Lem:pijkqijk}
For any association scheme $(X,\cR)$ with intersection numbers $p_{ij}^k$ and Krein parameters
$q_{ij}^k$, we have $p_{ij}^k = 0$ if and only if 
\begin{center}
\input{arxivdiagrams/pijk.tex}
\end{center}
and $q_{ij}^k=0$ if and only if 
\begin{center}
\input{arxivdiagrams/qijk.tex}
\end{center}
\end{lemma}

We also make frequent use of the following lemma.

\begin{lemma}
\label{Lem:EiEjEksum}
Let $(X,\cR)$ be an association scheme with Bose-Mesner algebra having primitive idempotents
$\{E_0,\ldots,E_d\}$ where, for $0\le h\le d$, $h'$ is the index for which $E_{h'} = E_h^\top$. Then \\
\begin{center}
\input{arxivdiagrams/EiEjEkTripleProduct.tex}
\end{center}
In particular, the zeroth order scaffold on the left is zero if and only if 
$q_{ij}^{k'} = 0$.
\end{lemma}

\begin{proof}
We compute
\begin{center}
\input{arxivdiagrams/BasicTripleProducts.tex} 
\end{center}
and note that $\tr E_{k'} = \tr E_k = m_k$. By the same token, 
\begin{center}
\input{arxivdiagrams/KreinParameterParallel.tex}  
\end{center}
the three possible evaluations all seen to be equal using the basic identity
$ q_{ij}^{k'} m_k = q_{ik}^{j'} m_j $.  $\Box$
\end{proof}

%
%
%
\subsection{Cometric association schemes}
\label{Subsec:cometric}

The present project was inspired by two papers of Hiroshi Suzuki \cite{suztwoq,suzimprim} establishing
fundamental structural properties of  cometric 
association schemes. In this section, we assume that $(X,\cR)$ is a symmetric association scheme
and $E_0,\ldots, E_d$ is the standard basis of primitive idempotents of its Bose-Mesner algebra
with Krein parameters $q_{ij}^k$ defined by the equations
$$ E_i \circ E_j = \frac{1}{|X|} \sum_{k=0}^d q_{ij}^k E_k. $$

\subsubsection{The Pinched Star and the Hollowed Delta}
\label{Subsubsec:toolsforcometric}

Since 
$$ E_i \circ E_j = \frac{1}{|X|} \sum_{h=0}^d q_{ij}^h E_h $$
we have 
\begin{equation}
\label{Eq:drumstick}
 \left(  E_i \circ E_j \right) E_k = \frac{q_{ij}^k}{|X|} E_k . 
\end{equation}
So have Rule $\sSR{3}$:

\input{arxivdiagrams/drumstick.tex} 

\medskip

and 

\input{arxivdiagrams/zerodrumstick.tex} 

\medskip

\noindent
Our notation here implies that no edges 
of $G$ are incident to the center node in the left diagram other than the three edges shown. We refer to the 
diagram on the left as the \emph{pinched star}.

\bigskip

Dual to this is the following identity for intersection numbers: since 
\begin{equation}
\label{Eq:hollowtriangle}
(A_j A_k) \circ A_i = \left( \sum_{e=0}^d p_{jk}^e A_e \right) \circ A_i = p_{jk}^i A_i , 
\end{equation}
we have the following collapse of the hollowed Delta, denoted $\sSR{3'}$ in the appendix:

\input{arxivdiagrams/wing.tex}

\subsubsection{Isthmuses}
\label{Subsubsec:isthmus}

The fundamental identity of Cameron, Goethals and Seidel \cite{cgs} given in Equation {\bf (2)} at the  beginning of
Section \ref{Sec:schemes} extends, using Rule $\sSR{0}$ to give us information about scaffolds of higher order.
Suzuki \cite[Lemma 4]{suzimprim} proved the symmetric version of the following ``Isthmus Lemma'',  
based on ideas of Dickie (Cf. \cite[Lemma 4.2.2]{dickie}). We extend Suzuki's result to the case of
(commutative) association schemes using scaffold notation and we denote this as Rule $\sSR{4}$.

\bigskip

\begin{lemma}
\label{Lem:Isthmus}
Let $(X,\cR)$ be an association scheme.
\end{lemma}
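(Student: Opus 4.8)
The plan is to prove the identity \emph{locally}, at the isthmus, and then promote the local statement to the full scaffold by the gluing principle. Concretely, I would first invoke Proposition~\ref{Prop:glue} (Rule $\sSR{5}$) to reduce to the case in which the diagram consists of nothing but the isthmus $e$, its two endpoints, and the edges incident to those endpoints, with the ``outer'' neighbours promoted to root nodes. Since $\sSR{5}$ guarantees that replacing a subdiagram by an equal subdiagram preserves the value of the ambient scaffold, it suffices to establish the claimed identity for this stripped-down rooted diagram; the particular way the rest of $G$ attaches then plays no further role, and the association-scheme hypothesis enters only through the idempotent and intersection-number identities already recorded.

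With the configuration localized, I would expand the scaffold directly from the defining sum (\ref{Eq:scaffdefn}), isolating the two summation indices attached to the endpoints of the isthmus. Because $E_k$ is a symmetric primitive idempotent we have $E_k = E_k E_k$ and $E_k^\top = E_{k'}$, so a single bridge edge can be split into two bridge edges meeting at a fresh hollow node (running Lemma~\ref{Lem:seriespar}(i) in reverse), or conversely two idempotent factors can be fused into one. Combined with the node-splitting move $\sSR{0}$ (Lemma~\ref{Lem:trivial}(ii)), this lets me rearrange the star at the pinch vertex into the canonical shape of the Cameron--Goethals--Seidel configuration of equation {\bf (2)}.

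The heart of the argument is then the pinched-star collapse $\sSR{3}$ together with equation {\bf (2)} of Lemma~\ref{Lem:pijkqijk}: two idempotent edges meeting a third at a hollow, non-root node contract to a single idempotent edge scaled by $q_{ij}^{k}/|X|$ (and vanish exactly when the corresponding Krein parameter is zero). Applying this at the pinch vertex produces the Krein-parameter factor and eliminates the auxiliary node, after which I would re-glue via Proposition~\ref{Prop:glue} to recover the stated identity inside the original scaffold. The symmetric version due to Suzuki and the prototype of Dickie fall out as the special cases where the relevant edge weights coincide.

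I expect the main obstacle to be the bookkeeping around which endpoints are roots and whether the pinch vertex actually satisfies the incidence hypotheses of $\sSR{3}$ (that it be hollow and meet exactly the three edges shown). In a general scaffold the vertex at the isthmus may carry additional incident edges or may itself be a root, so some care is needed first to peel these off --- using the hollow/solid conversion (Lemma~\ref{Lem:orderred}, $\sSR{10}$) and further node-splitting ($\sSR{0}$) --- before the collapse of equation {\bf (2)} can be applied. The other delicate point is tracking the conjugate index $k \leftrightarrow k'$ introduced when $E_k$ is transposed across the bridge, which must be reconciled with the symmetry relation $q_{ij}^{k'} m_k = q_{ik}^{j'} m_j$ already exploited in Lemma~\ref{Lem:EiEjEksum}.
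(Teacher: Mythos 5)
Your general toolkit (localize via Proposition~\ref{Prop:glue}, split nodes with $\sSR{0}$, invoke the Cameron--Goethals--Seidel vanishing of Lemma~\ref{Lem:pijkqijk}(2)) is the right circle of ideas and is close in spirit to what the paper does. But as written the proposal never actually invokes the hypothesis of the lemma --- that $q_{jk}^e \cdot q_{\ell m}^e = 0$ for all $e\neq h$ --- and that hypothesis is where the entire proof lives. Note also that the pinched-star collapse $\sSR{3}$ does not apply directly to either configuration: in (I) the hollow node has degree four ($E_j$, $E_k$ in and the parallel pair $E_\ell$, $E_m$ out), and in (II) it has degree four with no parallel pair. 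The paper's argument for (II) is: split the hollow node into two hollow nodes joined by an edge weighted $I$ (rule $\sSR{0}$), expand $I=\sum_{e=0}^d E_e$ by multilinearity, and observe that each resulting term contains two degree-three hollow tripods, one of which is annihilated by $\sSR{2'}$ whenever $q_{jk}^e=0$ and the other whenever $q_{\ell m}^e=0$; the hypothesis therefore kills every term except $e=h$. For (I) one instead parallel-reduces to $E_\ell\circ E_m=\frac{1}{|X|}\sum_e q_{\ell m}^e E_e$ and runs the same annihilation, which is what produces the single coefficient $q_{\ell m}^h/|X|$. Your phrase ``applying this at the pinch vertex produces the Krein-parameter factor and eliminates the auxiliary node'' skips this expansion-and-annihilation step entirely; without it you obtain a linear combination over all $e$ with $q_{\ell m}^e\neq 0$, not the single term the lemma asserts.

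Two smaller points. The device $E_h=E_hE_h$ for splitting the bridge is neither needed nor sufficient: what lets you pass between the $E_h$-bridge of (II) and a contractible $I$-bridge is precisely that the discrepancy $I-E_h=\sum_{e\neq h}E_e$ contributes nothing, again by the hypothesis. And the worries you raise about roots, extra incident edges, and the $k\leftrightarrow k'$ conjugation are peripheral here, since the diagrams in the statement already show the hollow node incident to exactly the edges involved and the applications (Dickie, Suzuki) are to symmetric schemes.
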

\noindent {\bf (I)} {\it If $q_{jk}^e \cdot q_{\ell m}^e =0$ for all $e\neq h$, then}

\medskip

\input{arxivdiagrams/SuzukiIsthmus1.tex} 

\medskip

\noindent {\bf (II)} {\it If $q_{jk}^e \cdot q_{\ell m}^e =0$ for all $e\neq h$, then}

\medskip

\input{arxivdiagrams/SuzukiIsthmus2.tex} 

\begin{proof} We provide a proof of (II) only:

\input{arxivdiagrams/Isthmusproof1.tex}  \medskip

\hfill \hfill
\input{arxivdiagrams/Isthmusproof2.tex}  \hfill $\phantom{\Box}$

\hfill \hfill
\input{arxivdiagrams/Isthmusproof3.tex}  \hfill $\Box$
\end{proof}

For example, in any cometric association scheme with $Q$-polynomial ordering $E_0,E_1,E_2,\ldots$, we have
\input{arxivdiagrams/FourEVanish.tex}  $\displaystyle = \mathbf{0}$ whenever $|k-j| > h+i$ and 
\input{arxivdiagrams/QbipEstar.tex} \!\!\! $\displaystyle = \mathbf{0}$ for any $Q$-bipartite association scheme\footnote{A cometric 
association scheme with $Q$-polynomial ordering $E_0,\ldots,E_d$ is $Q$-\emph{bipartite} with respect to this 
ordering if $q_{ij}^k=0$ whenever $i+j+k$ is odd.} in which $j_1+ \cdots + j_\ell$ is odd.
On the other hand, there are cases where we may easily see that such a scaffold is non-zero. It is easy to see that scaffolds of the form \input{arxivdiagrams/NonzeroAstar.tex} and scaffolds of the form \input{arxivdiagrams/NonzeroEcycle.tex} can never be zero.

\begin{proposition}
\label{Prop:NonzeroEstar}
Let $(X,\cR)$ be an association scheme  
having basis of primitive idempotents
$E_0,E_1, \ldots, E_d$ and second eigenmatrix $Q$.  If $j_1,\ldots, j_\ell \in \{ 0,1,\ldots,d\}$ satisfy
$ Q_{i j_1} \cdots Q_{i j_\ell} \ge 0 $
for all $0\le i \le d$, then 

\vspace*{-0.25in}
\begin{center}
\input{arxivdiagrams/NonzeroEstar.tex}
\end{center}
\end{proposition}

\begin{proof}
Let $y\in X$. We simply show that the coefficient of $\hat{y} \otimes \hat{y} \otimes \cdots \otimes \hat{y}$ is strictly positive. This coefficient is 
$$\sum_{x\in X} \left( E_{j_1} \right)_{x,y} \left( E_{j_2} \right)_{x,y}  \cdots \left( E_{j_\ell} \right)_{x,y} = \frac{1}{|X|^\ell}
\left( Q_{0, j_1}  \cdots Q_{0, j_\ell} + \sum_{i=1}^d v_i Q_{i j_1} \cdots Q_{i j_\ell}  \right) > 0. \quad \Box$$
\end{proof}

The author does not know of a nonzero scaffold of this form in which the coefficients of $\hat{y} \otimes \hat{y} \otimes \cdots \otimes \hat{y}$ are all zero.

The dual to Lemma \ref{Lem:Isthmus}, denoted $\sSR{4'}$,  is given without proof.

\begin{lemma}
\label{Lem:DualIsthmus}
Assume $(X,\cR)$ is an association scheme.

\bigskip

\noindent {\bf (I)} If $p_{hi}^e \cdot p_{jk}^e =0$ for all $e\neq \ell$, then

\medskip

\input{arxivdiagrams/DualIsthmus1.tex}

\bigskip

\noindent {\bf (II)} If $p_{hi}^e \cdot p_{jk}^e =0$ for all $e\neq \ell$, then

\medskip

\input{arxivdiagrams/DualIsthmus2.tex} \ \ $\Box$
\end{lemma}

\subsubsection{Dickie's Theorem}  
\label{Subsubsec:dickie}

Let  $(X,\cR)$ be a symmetric association scheme with Bose-Mesner algebra $\BMA$. An ordering 
$E_0,E_1,\ldots,E_d$ of its basis of primitive idempotents is a \emph{cometric ($Q$-polynomial) ordering} 
if the following conditions are satisfied:
\begin{itemize}
\item $q_{ij}^k=0$ whenever any one of the indices $i,j,k$ exceeds the sum of the remaining two, and
\item  $q_{ij}^k > 0$ when $i,j,k\in \{0,1,\ldots,d \}$ and any one of the indices equals the sum of the remaining two.
\end{itemize}
We say that $(X,\cR)$ is a \emph{cometric} (or $Q$-\emph{polynomial}) association scheme when such an ordering
exists. Now suppose that $E_0,E_1,\ldots, E_d$ is a cometric ordering and recall the standard  abbreviations 
for cometric scheme parameters 
$$ a_j^* = q_{1,j}^j \qquad b_j^* = q_{1,j+1}^j \qquad c_j^* = q_{1,j-1}^j  ~ .$$
Note that $b_j^* > 0$ for $0\le j < d$ and $c_j^* > 0$ for $1\le j \le d$.

To illustrate  scaffold notation, we now give two versions of G.\ Dickie's proof  of a theorem 
from his 1995 dissertation \cite{dickie}.

\begin{theorem}[Dickie, Thm.~4.1.1]
Suppose $(X,\cR)$ is a cometric association scheme with $Q$-polynomial ordering 
$$ E_0, E_1, \ldots, E_d.$$
If $0<j<d$ and  $a_j^*=0$, then $a_1^*=0$. 
\end{theorem}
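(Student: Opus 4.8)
The plan is to convert the statement about the cometric parameters $a_1^*=q_{11}^1$ and $a_j^*=q_{1j}^j$ into the language of small scaffolds and then to play two evaluations of a single scaffold against each other. First I would record, via the pinched-star rule $\sSR{3}$ together with Lemma~\ref{Lem:EiEjEksum}, that $\tr[(E_1\circ E_k)E_k]=\frac{q_{1k}^k}{|X|}m_k=\frac{a_k^*}{|X|}m_k$ for every $k$, so that the hypothesis $a_j^*=0$ is exactly the vanishing of the corresponding theta-scaffold and the desired conclusion $a_1^*=0$ is the vanishing of the theta-scaffold on three copies of $E_1$ (equivalently, of the claw scaffold characterized in Lemma~\ref{Lem:pijkqijk}). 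It is worth isolating at the outset the Krein nonnegativity $a_1^*=q_{11}^1\ge 0$; this reduces the theorem to producing a single \emph{upper} bound $a_1^*\le 0$.

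The engine is the $Q$-polynomial three-term recurrence $|X|\,(E_1\circ E_k)=b_{k-1}^*E_{k-1}+a_k^*E_k+c_{k+1}^*E_{k+1}$, which in diagrammatic terms is the parallel reduction of the two edges labelled $E_1$ and $E_k$ followed by the expansion over the surviving idempotents. I would evaluate the order-one scaffold corresponding to $E_1\circ E_1\circ E_k$ in two ways --- associating it as $(E_1\circ E_1)\circ E_k$ and as $E_1\circ(E_1\circ E_k)$ --- and compare the coefficient of $E_{k-1}$ (rather than of $E_k$, where the hypothesis would merely cancel the $a_1^*$-term). Using $\sSR{3}$ and the recurrence, this yields for each interior $k$ the identity
\begin{equation*}
 b_{k-1}^*\bigl(a_{k-1}^*+a_k^*-a_1^*\bigr)=c_2^*\,q_{2,k}^{k-1},
\end{equation*}
whose right-hand side is nonnegative by the Krein condition. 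Setting $k=j$ and invoking $a_j^*=0$ gives $b_{j-1}^*\,(a_{j-1}^*-a_1^*)=c_2^*\,q_{2,j}^{j-1}\ge 0$; since $b_{j-1}^*>0$ (here $0<j$ is used), this already forces $a_{j-1}^*\ge a_1^*$, and the mirror computation capping at $E_{k+1}$ gives $c_{j+1}^*\,(a_{j+1}^*-a_1^*)=c_2^*\,q_{2,j}^{j+1}\ge 0$, hence $a_{j+1}^*\ge a_1^*$ (here $j<d$ is used so that $E_{j+1}$ and $c_{j+1}^*>0$ exist).

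The main obstacle will be turning these lower bounds on the neighbours of $a_j^*$ into the matching upper bound $a_1^*\le 0$. A single local identity cannot do this, because whenever one caps the scaffold at index $j$ the hypothesis $a_j^*=0$ annihilates precisely the $a_1^*$-term; the genuine content of the theorem lives in combining the whole family of nonnegative-right-hand-side relations. I expect to close the argument either by a downward propagation of the vanishing from level $j$ toward level $1$ using the Isthmus moves of Lemma~\ref{Lem:Isthmus} --- whose hypothesis $q_{1,j}^e\,q_{1,j-2}^e=0$ for $e\neq j-1$ is supplied by the $Q$-polynomial support pattern once $a_j^*=0$ removes the middle term --- or by a global positivity argument in the spirit of Proposition~\ref{Prop:NonzeroEstar}, exploiting that \emph{every} $q_{ab}^c\ge 0$ and that $j$ is interior. (The boundary case $j=1$ is vacuous, since then $a_j^*=a_1^*$.) Making this combination airtight, rather than deriving the individual relations above, is the crux.
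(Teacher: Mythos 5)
Your preliminary identities are correct: comparing the $E_{k-1}$-coefficient in the two associations of $E_1\circ E_1\circ E_k$ does give $b_{k-1}^*\,(a_{k-1}^*+a_k^*-a_1^*)=c_2^*\,q_{2,k}^{k-1}\ge 0$, and likewise for the $E_{k+1}$-coefficient. But the proof is not complete, and the gap you flag at the end is not a technicality that a "global positivity argument" will fill. Every relation you derive comes from comparing coefficients in the commutative algebra $(\BMA,\circ)$, so it is an identity among the Krein parameters alone (an instance of $\sum_h q_{11}^h\,q_{h,k}^{k-1}=\sum_h q_{1,k}^h\,q_{1,h}^{k-1}$). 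Combined with Krein nonnegativity such relations yield only the one-sided bounds $a_{j\pm 1}^*\ge a_1^*$, which point the wrong way and cannot be assembled into $a_1^*\le 0$. The paper's proof does not operate at the parameter level at all: the hypothesis $q_{1,j}^j=0$ enters through Lemma~\ref{Lem:pijkqijk}\,(2), i.e.\ as the vanishing of the order-three tensor $\sum_{z}(E_j)_{a,z}(E_j)_{b,z}(E_1)_{c,z}$ for \emph{every} triple $a,b,c\in X$. That pointwise vanishing is what licenses scaling by the arbitrary weight $(E_{j-1})_{a,b}(E_1)_{b,c}(E_1)_{c,a}$ and summing over $a,b,c$ --- the step producing the prism diagram $(\diamond)$ --- and no manipulation of the scalars $q_{ij}^k$ recovers it.

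Your two suggested closings also miss how the argument actually runs. The Isthmus Lemma (Lemma~\ref{Lem:Isthmus}) is indeed the central tool, but its hypotheses in this proof ($q_{j,1}^e\,q_{1,j+1}^e=0$ for $e\ne j+1$ and $q_{1,j-1}^e\,q_{1,j+1}^e=0$ for $e\ne j$) come from the $Q$-polynomial support pattern alone and do not require $a_j^*=0$ to ``remove a middle term''; the lemma is used to contract nodes of the sixth-order prism, not to propagate vanishing down the index ladder. The correct sequence is: pinch the vanishing star with $E_1\circ E_{j+1}$ (using $b_j^*\ne 0$), split off the $E_{j+1}$ edges by the Isthmus Lemma, take the scalar product with the triangle carrying $E_{j-1},E_1,E_1$, contract twice more by the Isthmus Lemma, expand $E_1\circ E_{j-1}$ by the three-term recurrence and discard all but the $E_j$ term using $q_{1,j+1}^e=0$ for $e<j$ together with $c_j^*\ne 0$, apply the Isthmus Lemma once more, and finish with the pinched-star rule and $q_{j,j+1}^1\ne 0$ to land on $\SUM(E_1\circ E_1\circ E_1)=0$. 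Until your argument incorporates the tensor-level consequence of $a_j^*=0$ in some such way, it cannot reach the conclusion.
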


\begin{proof}
We begin with our assumption: since $ 0=a_j^* = q_{1,j}^{j}$, we have, from {\bf (2)} above,
\begin{equation} \label{EDckKrein1}
 \zero  = \sum_{a,b,c,z\in X} (E_j)_{a,z} (E_{j})_{b,z} (E_{1})_{c,z} \hat{a} \otimes \hat{b} \otimes \hat{c}~.
\end{equation}
which is more traditionally expressed as 
$$  \sum_{z\in X} (E_j)_{a,z} (E_{j})_{b,z} (E_{1})_{c,z} = 0 \qquad \left( \forall a,b,c \in X\right).$$
Since $\displaystyle E_j(E_1 \circ E_{j+1})  =     \frac{q_{1,j+1}^j }{|X|} E_j$ with $q_{1,j+1}^j \neq 0$, we then have, for all $a,b,c \in X$, 
\begin{eqnarray*}
 0 &=& \sum_{z \in X} \left( E_j(E_1 \circ E_{j+1})  \right)_{a,z}  (E_{j})_{b,z} (E_{1})_{c,z} \\
    &=& \sum_{x,z \in X}  (E_j)_{a,x} (E_1 \circ E_{j+1})_{x,z}  (E_{j})_{b,z} (E_{1})_{c,z}  \\
    &=& \sum_{x,z \in X}  (E_j)_{a,x} (E_1)_{x,z} ( E_{j+1})_{x,z}  (E_{j})_{b,z} (E_{1})_{c,z}  ~ .
\end{eqnarray*}

Now we apply the Isthmus Lemma with the observation that  $\displaystyle  q_{j,1}^e \cdot q_{1,j+1}^e = 0$
for any $e\neq j+1$: 
$$  0 =  \sum_{x,y,z \in X}  (E_j)_{a,x} (E_1)_{x,y} ( E_{j+1})_{x,z}  (E_{j})_{b,y} (E_{j+1})_{y,z} (E_{1})_{c,z} ~.$$
This holds true for \underline{any} choice of vertices $a,b,c$. So we can scale each vanishing quantity 
by a value dependent on $a,b,c$, such as 
$$   (E_{j-1} )_{a,b}  (E_1)_{b,c} (E_{1})_{c,a} ,$$
and still obtain a zero result. We sum this product over all choices of $a$, $b$ and $c$ to find:
\begin{equation}  \label{EDckPrism}
  0 =   \sum_{a,b,c,x,y,z \in X}  (E_{j-1} )_{a,b}  (E_1)_{b,c} (E_{1})_{c,a} 
(E_j)_{a,x} (E_1)_{x,y} ( E_{j+1})_{x,z}  (E_{j})_{b,y} (E_{j+1})_{y,z} (E_{1})_{c,z} ~.
\end{equation}

Now it's time to eliminate some factors from these huge products. We apply the Isthmus Lemma. Observe that
$\displaystyle  q_{1,j-1}^e \cdot q_{1,j+1}^e = 0$ for any $e\neq j$. So we can identify $b$ and $y$ in (\ref{EDckPrism}):
\begin{equation} \label{EDck1st-contract}
  0 =   \sum_{a,c,x,y,z \in X}  (E_{j-1} )_{a,y}  (E_1)_{y,c} (E_{1})_{c,a} 
(E_j)_{a,x} (E_1)_{x,y} ( E_{j+1})_{x,z} (E_{j+1})_{y,z} (E_{1})_{c,z} ~ .
\end{equation}

\bigskip

Next we apply the Isthmus Lemma to identify $a$ and $x$ in (\ref{EDck1st-contract}). Again using the 
fact  that $\displaystyle  q_{1,j-1}^e \cdot q_{1,j+1}^e = 0$ for any $e\neq j$, we may write 
\begin{equation} \label{EDck2nd-contract}
  0 =   \sum_{c,x,y,z \in X}  (E_{j-1} )_{x,y}  (E_1)_{y,c} (E_{1})_{c,x} 
(E_1)_{x,y} ( E_{j+1})_{x,z} (E_{j+1})_{y,z} (E_{1})_{c,z} ~ .
\end{equation}

We can replace $ (E_1)_{x,y} (E_{j-1} )_{x,y}$ in (\ref{EDck2nd-contract}) by $(E_1 \circ E_{j-1} )_{x,y} $ to obtain
\begin{equation} \label{EDicbubbletri}
 0 =   \sum_{c,x,y,z \in X}  (E_1 \circ E_{j-1} )_{x,y}  (E_1)_{y,c} (E_{1})_{c,x} 
( E_{j+1})_{x,z} (E_{j+1})_{y,z} (E_{1})_{c,z} ~ .
\end{equation}
Now we expand
$$ E_1 \circ E_{j-1} = \frac{b_{j-2}^* }{|X|} E_{j-2} +  \frac{a_{j-1}^* }{|X|} E_{j-1} +  \frac{c_{j}^* }{|X|} E_{j} $$
and observe $ q_{1,j+1}^e = 0  $ for $e< j$.  Since $c_j^*\neq 0$, we have
\begin{eqnarray*}
 0 &=&  \sum_{e=0}^d   \frac{q_{1,j+1}^e }{|X|}  \sum_{c,x,y,z \in X}  (E_e)_{x,y}  (E_1)_{y,c} (E_{1})_{c,x} 
( E_{j+1})_{x,z} (E_{j+1})_{y,z} (E_1)_{c,z} \\
  &=&  \sum_{c,x,y,z \in X}  (E_{j})_{x,y}  (E_1)_{y,c} (E_{1})_{c,x}  ( E_{j+1})_{x,z} (E_{j+1})_{y,z} (E_{1})_{c,z} 
\end{eqnarray*}
since $q_{1,j+1}^{j}=b_j^* \neq 0$ by the cometric property.

We apply the Isthmus Lemma on the $(E_{j+1})_{y,z}$ factor, observing that
$\displaystyle  q_{1,j+1}^e \cdot q_{1, j}^e = 0$ for any $e\neq j+1$. This gives
$$ 0 =  \sum_{c,x,z \in X} (E_{j})_{x,z}  (E_1)_{z,c} (E_{1})_{c,x}  ( E_{j+1})_{x,z}  (E_{1})_{c,z} ~ .$$
Let's re-organize the terms
$$ 0 =  \sum_{c,z\in X}   (E_1)_{c,z}^2
 \sum_{x\in X} (E_{j})_{x,z}   (E_{j+1})_{x,z}  ( E_{1})_{x,c}  ~. $$

\noindent We can again apply (\ref{Eq:drumstick}).  First use the entrywise product to write this as 
$$ 0 =  \sum_{c,z\in X}   (E_1)_{c,z}^2
 \sum_{x\in X} (E_{1})_{c,x}   (E_{j} \circ E_{j+1})_{x,z}  ~. $$
Now, using (\ref{Eq:drumstick}) and cancelling $q_{j,j+1}^1/|X| \neq 0$,  we may write
$$ 0 =  \sum_{c,z\in X}   (E_1)_{c,z}^3  ~. $$
In other words, $\SUM( E_1 \circ E_1 \circ E_1 ) = 0$
which is equivalent to $a_1^*=0$, our desired result.
$\Box$
\end{proof}

\bigskip

This second version of the same proof  indicates that  Dickie used something equivalent to scaffolds:

\bigskip

\begin{proof}
This proof consists almost entirely of a sequence of 
scaffolds all equal to the zero tensor $\zero \in V^{\otimes 3}$ or the zero scalar. 
We begin with our assumption that $a_j^*=0$:

\input{arxivdiagrams/ajstarequalszero.tex}

\input{arxivdiagrams/dickieadddrumstick.tex}

\input{arxivdiagrams/dickiefirstisthmus.tex}

Now take the scalar product of this vanishing scaffold with the third-order tensor  \\
\input{arxivdiagrams/dickielittletriangle.tex} 

\begin{center}
\input{arxivdiagrams/dickieprism.tex} 
\end{center}
(Note that here and below we use a bold edge to indicate the location in the diagram where the next simplification will be applied.)

\input{arxivdiagrams/dickiecontractverticaledge.tex}

\input{arxivdiagrams/dickiecontractleftedge.tex}

Using the entrywise product, \\ 
\begin{center}
\input{arxivdiagrams/dickiereplacedoubleedge.tex}
\end{center}

Now we expand
$$ E_1 \circ E_{j-1} = \frac{b_{j-2}^* }{|X|} E_{j-2} +  \frac{a_{j-1}^* }{|X|} E_{j-1} +  \frac{c_{j}^* }{|X|} E_{j} $$
and observe $ q_{1,j+1}^e = 0  $ for $e< j$. Since $c_j^*\neq 0$, we have 

\input{arxivdiagrams/dickieSchursimplify.tex}

\input{arxivdiagrams/dickiecontractnortheast.tex}  

\bigskip

Now we have a pinched star! \input{arxivdiagrams/dickiedrumstick.tex}  and we know
$$ (E_j \circ E_{j+1} ) E_1 = \frac{ q_{j,j+1}^1 }{|X|} E_1 $$
with $ q_{j,j+1}^1\neq 0$ by the cometric property.
So we have 

 \input{arxivdiagrams/dickiecontractdrumstick.tex} 

That is, $\SUM( E_1 \circ E_1 \circ E_1 ) = 0$ which tells us that $q_{11}^1=0$, or $a_1^*=0$. $\Box$
\end{proof}


\subsubsection{Suzuki's Theorem}
\label{Subsubsec:suzuki}

We find the same sort of proof structure in Proposition 3 of Suzuki's paper \cite{suzimprim}.
 
 \begin{theorem}[Suzuki]
 \label{Thm:Suzuki}
 In a cometric association scheme $(X,\cR)$ with $Q$-polynomial ordering $E_0,\ldots, E_d$ of its primitive 
 idempotents, if indices $j\le i \le i+j\le h+i+j\le d$ satisfy $q_{j,h+i}^e \cdot q_{i-j,h+j}^e = 0$ for 
 all $e\neq h+i-j$ and $q_{i,h+j}^{h+i}=0$,  then $q_{j,h+j}^{h+j}=0$.
 \end{theorem}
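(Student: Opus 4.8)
The plan is to mirror the second (diagrammatic) proof of Dickie's Theorem given just above, of which the present statement is the generalization recovered by setting $j=1$ and $h=0$. In that specialization the hypothesis $q_{i,h+j}^{h+i}=q_{i,1}^{i}=a_i^*$ becomes $a_j^*=0$, the conclusion $q_{j,h+j}^{h+j}=q_{1,1}^1=a_1^*$ becomes $a_1^*=0$, and the standing Isthmus hypothesis $q_{j,h+i}^e\cdot q_{i-j,h+j}^e=q_{1,i}^e\cdot q_{i-1,1}^e=0$ (for $e\neq h+i-j=i-1$) reduces to $a_i^*\,b_{i-1}^*=0$, which is automatic from $a_i^*=0$. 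Accordingly I would produce a chain of third-order scaffolds, each provably equal to $\zero$, starting from the hypothesis $q_{i,h+j}^{h+i}=0$ and ending in a zeroth-order scaffold whose value is a nonzero scalar multiple of $q_{j,h+j}^{h+j}$.

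First I would encode $q_{i,h+j}^{h+i}=0$ as a vanishing pinched star: by Lemma~\ref{Lem:pijkqijk} (equation \textbf{(2)}), the order-three scaffold on three root nodes joined to a common hollow center by edges labeled $E_i$, $E_{h+j}$, and $E_{h+i}$ equals $\zero$. Next, reading the pinched-star collapse $\sSR{3}$ backwards --- that is, using an identity of the form $(E_1\circ E_{?})E_{?}=\tfrac{b^*_{\,\cdot}}{|X|}E_{?}$ whose boundary Krein parameter is positive by the cometric inequalities --- I would inflate one edge into a bubble, paying a factor equal to the reciprocal of that nonzero parameter. I would then apply the Isthmus Lemma (Lemma~\ref{Lem:Isthmus}), invoking the standing hypothesis $q_{j,h+i}^e\cdot q_{i-j,h+j}^e=0$ for $e\neq h+i-j$, to merge across the junction and introduce the pivotal index $h+i-j$. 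Taking the scalar product of the resulting vanishing scaffold with a carefully chosen triangle tensor built from primitive idempotents yields a prism-type diagram on six hollow nodes, the exact analog of the diagram $(\diamond)$ in Dickie's proof.

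From there I would repeatedly apply the Isthmus Lemma to contract pairs of nodes, use parallel reduction (Lemma~\ref{Lem:seriespar}(ii)) to form entrywise products, and expand each arising factor $E_1\circ E_{s}$ in the three-term cometric form $\tfrac{b^*_{s-1}}{|X|}E_{s-1}+\tfrac{a^*_{s}}{|X|}E_{s}+\tfrac{c^*_{s+1}}{|X|}E_{s+1}$, discarding at each stage the summands killed by a cometric vanishing $q^e_{\cdot,\cdot}=0$ and retaining a single surviving term whose coefficient (a $b^*$ or $c^*$) is nonzero. A final pinched-star collapse $\sSR{3}$, together with equation~(\ref{Eq:drumstick}), leaves a zeroth-order scaffold equal to a positive multiple of $q_{j,h+j}^{h+j}$ (equivalently, a $\SUM$ of a triple entrywise product of idempotents in the spirit of Lemma~\ref{Lem:EiEjEksum}), forcing $q_{j,h+j}^{h+j}=0$.

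The hard part will be the index bookkeeping rather than the execution of any individual move: at every Isthmus application one must verify that the relevant product of Krein parameters vanishes off the single target index, and at every cancellation one must verify that the boundary parameter being divided out is genuinely positive. Both are governed entirely by the chain $j\le i\le i+j\le h+i+j\le d$ together with the cometric rule that $q^k_{rs}=0$ as soon as one index exceeds the sum of the other two and $q^k_{rs}>0$ when one equals that sum. The inequalities in the hypothesis are precisely what keep every intermediate index inside $\{0,\dots,d\}$ and on the correct side of these triangle constraints, and confirming that the specific indices produced by the generalized moves remain admissible is where the real work lies.
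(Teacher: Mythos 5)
Your plan reproduces the paper's argument: the paper itself only records the generalized prism diagram (with prefactor $|X|/q_{j,i-j}^{i}$, obtained exactly as you describe --- the vanishing pinched star for $q_{i,h+j}^{h+i}=0$, a bubble inflation via $(E_j\circ E_{i-j})E_i=\tfrac{q_{j,i-j}^{i}}{|X|}E_i$ with $q_{j,i-j}^{i}>0$ from the cometric boundary condition, the Isthmus Lemma invoked with the hypothesis $q_{j,h+i}^{e}\,q_{i-j,h+j}^{e}=0$ for $e\ne h+i-j$, and a scalar product with the triangle labeled $E_{h+j},E_j,E_{h+i+j}$) and then explicitly leaves the remaining Dickie-style reductions to the reader, just as you defer the index bookkeeping. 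So your proposal is essentially the same approach as the paper's own (sketched) proof.
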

 
 Since the proof structure is similar to the proof of Dickie's Theorem above, we simply present one diagram 
 (a generalization of $(\diamond)$ above) and leave the steps to the reader.
 
 Starting with the equation $q_{i,h+j}^{h+i}=0$, the proof consists in constructing a third order tensor which, 
 under the given hypotheses, must be zero.  We then compute the scalar product of this scaffold 
 with \input{arxivdiagrams/SuzProof3a.tex}  to obtain the identity
 
 \begin{center}
 \input{arxivdiagrams/SuzProofnutshell.tex}
 \end{center}

In \cite{suztwoq}, 
Suzuki uses these and other ideas to prove that a cometric association scheme which is not 
a polygon admits at most two $Q$-polynomial orderings and to narrow down the possibilities for a 
second $Q$-polynomial ordering given that $\{E_0,E_1,\ldots,E_d\}$ is such an ordering. 
Here is a lemma from that paper.

\begin{lemma}[Suzuki]
Let $(X,\cR)$ be a cometric association scheme with $Q$-polynomial ordering $E_0,E_1,\ldots,E_d$ of its
primitive idempotents. Let $h,i,j,k,\ell,m$ be indices satisfying $q_{i,j}^h\neq 0$ and
$$ \left( \forall e\neq \ell \right) \left( q_{h,m}^e \cdot q_{i,k}^e = 0 \right) \qquad \mbox{and} \qquad 
 \left( \forall e\neq m \right) \left( q_{h,\ell}^e \cdot q_{j,k}^e = 0 \right) . $$
 Then $q_{k,\ell}^i = q_{k,m}^j$.  $\Box$
\end{lemma}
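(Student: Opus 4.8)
The plan is to read the two product hypotheses as instances of the Isthmus Lemma (Lemma~\ref{Lem:Isthmus}) and to treat $q_{ij}^h\neq 0$ as the non-degeneracy that will be divided out at the very end. Concretely, the condition $q_{h,m}^e q_{i,k}^e=0$ for all $e\neq\ell$ matches the hypothesis of the Isthmus Lemma with collapsing pair $\{E_h,E_m\}$, transverse edges $E_i,E_k$, and surviving index $\ell$; the condition $q_{h,\ell}^e q_{j,k}^e=0$ for all $e\neq m$ matches it with collapsing pair $\{E_h,E_\ell\}$, transverse edges $E_j,E_k$, and surviving index $m$. Because I want the two evaluations to differ only by the factor I am after, I would use the \emph{scalar-free} form of the Isthmus Lemma, namely part (II), which merges or splits a degree-four hollow node across an $E_h$-isthmus without introducing any constant.

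First I would build a single scalar scaffold $\Theta=\sS(G,\emptyset;w)$ on hollow nodes whose edge weights are drawn from $E_h,E_i,E_j,E_k,E_\ell,E_m$ and which is symmetric enough to be simplified in two distinct ways. The skeleton I have in mind is a prism-type diagram: two triangular ``fans'' sharing the edge $E_h$ (the isthmus) together with a copy of $E_k$, the remaining edges $E_i,E_j,E_\ell,E_m$ distributed on the two fans so that one fan realizes the transverse pair $(E_i,E_k)$ of hypothesis (A) and the other realizes the transverse pair $(E_j,E_k)$ of hypothesis (B). By Proposition~\ref{Prop:glue} every local move below is a legitimate operation inside $\Theta$.

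Then I would evaluate $\Theta$ twice. In the first evaluation I apply Lemma~\ref{Lem:Isthmus}(II), justified by hypothesis (A), to rearrange the diagram so that $E_k$ and $E_\ell$ become a parallel pair meeting $E_i$ at a hollow node; a pinched-star collapse (Rule~$\sSR{3}$, i.e.\ Equation~(\ref{Eq:drumstick})) then replaces $(E_k\circ E_\ell)E_i$ by $\frac{q_{k\ell}^i}{|X|}E_i$, and a second pinched star replaces $(E_i\circ E_j)E_h$ by $\frac{q_{ij}^h}{|X|}E_h$, leaving a residual scalar $N$ obtained by closing off the $E_h$-edge. This gives $\Theta=\frac{q_{ij}^h\,q_{k\ell}^i}{|X|^2}\,N$. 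In the second evaluation I instead invoke hypothesis (B) to rearrange, producing by the identical closing-off the value $\Theta=\frac{q_{ij}^h\,q_{km}^j}{|X|^2}\,N$ with the \emph{same} residual $N$.

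Equating the two expressions and cancelling the common factor $\frac{q_{ij}^h}{|X|^2}N$ yields $q_{k\ell}^i=q_{km}^j$. The cancellation is legitimate precisely because $q_{ij}^h\neq 0$ and because $N\neq 0$; the latter I would establish by the positivity argument of Proposition~\ref{Prop:NonzeroEstar}, reading off the coefficient of a diagonal basis tensor $\hat y\otimes\cdots\otimes\hat y$ in the relevant open sub-scaffold and checking that it is a combination of $Q$-matrix entries that cannot all vanish. The main obstacle is the construction in the second paragraph: choosing the prism $\Theta$ so that \emph{both} the (A)-rearrangement and the (B)-rearrangement reduce, after pinched-star collapses, to literally the same residual $N$. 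Making the two fans share the $E_h$-isthmus and the $E_k$-edge symmetrically is what forces this common $N$, and it is essential to use the constant-free Isthmus (II) rather than Isthmus (I): the scalar-carrying form would contaminate the two sides with the unequal constants $q_{h,m}^\ell$ and $q_{h,\ell}^m$ (related only through the Krein symmetry $q_{h,m}^\ell m_\ell=q_{h,\ell}^m m_m$), which would destroy the clean equality.
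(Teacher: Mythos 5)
Your strategy is the paper's (and Suzuki's) own: evaluate a single order-zero $K_4$ scaffold in two ways, each time merging two nodes across an edge via the scalar-free Isthmus Lemma~(II), then collapsing the resulting parallel pair against the third edge with a pinched star ($\sSR{3}$), so that the two evaluations differ only by the factors $q_{k,\ell}^i$ and $q_{k,m}^j$ multiplying a common theta scaffold on $E_h,E_i,E_j$, which is nonzero precisely because $q_{ij}^h\neq 0$. Two bookkeeping points, one of which is a genuine internal inconsistency as written. First, you have the two hypotheses crossed. In hypothesis (A) the surviving index is $\ell$, so the Isthmus move it licenses \emph{contracts} the $E_\ell$ edge of the $K_4$ (merging the endpoints of $E_\ell$, whose other incident edges are $\{E_h,E_m\}$ and $\{E_i,E_k\}$); after that move $E_\ell$ is gone from the diagram and cannot subsequently ``become a parallel pair with $E_k$.'' That contraction leaves $\{E_m,E_k\}$ parallel at the node carrying $E_j$, producing the factor $q_{k,m}^j$; it is hypothesis (B) (contraction across $E_m$) that leaves $\{E_\ell,E_k\}$ parallel at the node carrying $E_i$ and produces $q_{k,\ell}^i$. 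Since both reductions are carried out and the desired conclusion is symmetric under this swap, the argument survives once the labels are straightened out, but your description of the first evaluation is not executable as stated. Relatedly, $E_h$ is never the isthmus: the isthmus edges are $E_\ell$ and $E_m$, while $E_h$ survives into the final theta graph. Second, your residual $N$ needs no appeal to Proposition~\ref{Prop:NonzeroEstar} (which concerns rooted star tensors, not closed scalars): the last pinched star leaves a \emph{loop} weighted $E_h$, so $N=\tr(E_h)=m_h>0$ by Lemma~\ref{Lem:EiEjEksum}; had it left an open hollow--hollow $E_h$ edge, the value would be $\SUM(E_h)=0$ for $h\neq 0$ and the cancellation would collapse, so this distinction is worth making explicit.
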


To prove this, Suzuki shows two ways to manipulate a scaffold built on $K_4$:

\begin{center}
\input{arxivdiagrams/SuzukiK4collapse1.tex}
\end{center} 
and, at the same time,
\begin{center}
\input{arxivdiagrams/SuzukiK4collapse2.tex}
\end{center}

The dual argument is well known in the theory of distance-regular graphs. 
While there seem to be many identities involving parameters of association schemes that are probably 
best proven by more elementary means, we give one fundamental identity from the book of Brouwer, Cohen
and Neumaier \cite[Lemma 2.1.1]{bcn} here, 
along with the diagrams involved in proving the identity:
$$ \sum_{\ell=0}^d  p_{hi}^\ell p_{\ell j}^k = \sum_{r=0}^d p_{hr}^k p_{ij}^r $$

\begin{center}
\input{arxivdiagrams/doublecountK4.tex}
\end{center}

\begin{center}
\input{arxivdiagrams/seconddoublecountK4.tex}
\end{center}
 
 %
%
%
\subsection{Generalized intersection numbers for distance-regular graphs}
\label{Subsec:intersection}

Following Coolsaet and Juri\v{s}i\'{c} \cite{cooljur}, we now define generalized intersection
numbers for an arbitrary association scheme. 
For $a_1,\ldots,a_k\in X$ and $i_1,\ldots,i_k \in \{0,\ldots,d\}$, 
define
$$ \left[ \begin{array}{rrrr}  a_1 & a_2 & \cdots & a_k \\ i_1 & i_2 & \cdots & i_k \end{array} \right]
= \left| \left\{ b\in X \mid (\forall j)\left( (a_j,b)\in R_{i_j} \right) \right\} \right| ;$$
this is the number of vertices $i_j$-related to $a_j$ for $j=1,\ldots,k$. One immediately notices these values
as the coefficients of elementary tensors in the following star scaffold:
\begin{center}
\hfill \input{arxivdiagrams/GenIntersectionNums.tex}
\end{center}
In some cases --- e.g., when $k$ is small --- this number does not depend  on the choice of the vertices $a_j$ but only on the $\binom{k}{2}$
relations joining them. For instance, here are the encoding of the most basic forms in the language of 
scaffolds:

\input{arxivdiagrams/TrivialTripleIntersectionNumbers.tex} 

Triple intersection numbers are a powerful tool in proving non-existence of 
certain association schemes \cite{cooljur,gavrsudavida}. The scalars 
$\left[ \begin{array}{rrr}  a & b & c \\ i & j & k \end{array} \right]$ are simply the
coefficients in the expansion of one of our fundamental ``star'' diagrams in the basis
of simple tensors. We have

\hfill \input{arxivdiagrams/TripleIntersectionNums.tex} \hfill \break

Consider a distance-regular graph $\Gamma$ of diameter $d$ with vertex set $X$ and path-length metric
$\partial$. Then
$$ \left[ \begin{array}{rrrr}  a_1 & a_2 & \cdots & a_k \\ i_1 & i_2 & \cdots & i_k \end{array} \right]
= \left| \left\{ b\in X \mid \partial(b,a_1)=i_1, \ \ldots,  \partial(b,a_k)=i_k \right\} \right| $$
counts vertices $b$ simultaneously at distance $i_j$ from basepoint $a_j$ for $1\le j \le k$.

Coolsaet and Juri\v{s}i\'{c} \cite{cooljur} observe, in their Equation (5), that

\input{arxivdiagrams/CJ0jk.tex}  %

Their Equation (6) can be expressed

\input{arxivdiagrams/CJsum.tex}  %

Of course, $\left[ \begin{array}{rrr}  a & b & c \\ i & j & k \end{array} \right]$ depends on the choice of $a$, $b$, and $c$, but several researchers (e.g.,  \cite{cooljur,gavrsudavida}) 
have ruled out feasible parameter sets for distance-regular graphs by analyzing linear relations that these
numbers must satisfy. A key insight in \cite{cooljur} is the following.  If we know that a Krein parameter $q_{r,s}^t$
vanishes, then we have
 \input{arxivdiagrams/qrstEqualZero.tex}

\noindent and expanding $E_r = \frac1{|X|} \sum_{i=0}^d Q_{ir} A_i$, and similarly for $E_s$ and $E_t$, we 
find

\input{arxivdiagrams/TripleIntersectionNumEquation.tex} 

\noindent The same logic works on the dualized diagrams to give us the following apparently new identity:

\input{arxivdiagrams/TripleKreinParamEquation.tex}

\section{Vector spaces of scaffolds}
\label{Sec:vectorspaces}

Given a subspace or subalgebra $\BMA$ of $\Mat_X(\cx)$, we now investigate various
spaces contained in the vector space spanned by all scaffolds with edge weights in $\BMA$. We
obtain interesting results when we fix the rooted diagram or simply fix the number of root
nodes.

%
%
%
\subsection{Inner products and bilinear maps}
\label{Subsec:innerprod}

Standard scalar products on tensors and certain natural bilinear maps can be described easily as 
gluing operations on diagrams.
Our most familiar inner product on tensors is the Frobenius product of two matrices:
$$ M = \sum_{a,b \in X} M_{ab} \ \hat{a} \otimes \hat{b}, \qquad  N = \sum_{a,b \in X} N_{ab} \ \hat{a} \otimes \hat{b},
 \qquad 
 \langle  M,N \rangle = \sum_{a,b \in X}  \bar{M}_{ab} N_{ab}~. $$
 In terms of scaffolds, this is expressed

\input{arxivdiagrams/Frob1.tex}

\noindent where $\bar{M}$ is the matrix obtained by conjugating each entry of $M$. This is achieved by the following operation on scaffolds:

\input{arxivdiagrams/Frob2.tex} 

We extend this to scaffolds $\scafs$ and $\scaft$ 
of order $m$: we assume a consistent ordering of the root nodes in 
the two diagrams, indicated by their spatial arrangement, and simply join each pair of 
corresponding root nodes $v_s,v_t$ by an edge $e=(v_s,v_t)$ with weight $w(e)=I$, conjugate the edge weights coming from the left argument and, in the new scaffold, make all nodes hollow.  For instance,

\input{arxivdiagrams/IPorder3.tex} \\
is simply the linear extension of the product defined by 
\begin{equation}
\label{Eqn:ip3}
\langle \mathbf{y}_1 \otimes \mathbf{y}_2 \otimes \cdots \otimes \mathbf{y}_m \ , \ \mathbf{z}_1 \otimes \mathbf{z}_2 \otimes \cdots \otimes \mathbf{z}_m \rangle
=
\langle \mathbf{y}_1, \mathbf{z}_1 \rangle
\langle \mathbf{y}_2 , \mathbf{z}_2 \rangle \cdots
\langle \mathbf{y}_m , \mathbf{z}_m \rangle
\end{equation}
for $\mathbf{y}_1,\mathbf{y}_2 , \ldots, \mathbf{y}_m , \mathbf{z}_1 ,\mathbf{z}_2 , \ldots ,  \mathbf{z}_m \in \cx^X$
in the case $m=3$.

\begin{example}
Let $X$ denote the vertex set of the Petersen graph with adjacency matrix $A_1$ and primitive idempotents $E_0,E_1,E_2$ satisfying $A_1 E_1 = E_1$ and $A_1 E_2 = -2E_2$. Although every edge weight below
is positive semidefinite, straightforward computation shows 
\begin{center}
\input{arxivdiagrams/IPcounteraxamplePet.tex}
\end{center}
\end{example}

Rather than take a proper scalar product, we may want to combine two scaffolds in such a way as to reduce the
overall order by pairing up selected  components of tensors. Let us begin with an example. If 
$$ \scafs = \sum_{a,b,c,d} M_{a,b,c,d}  \ \hat{a} \otimes \hat{b} \otimes \hat{c} $$
and 
$$ \scaft = \sum_{a,c,x,y} N_{a,c,x,y} \ \hat{a} \otimes \hat{c} \otimes \hat{y} $$
we desire notation for  bilinear maps such as 
$$ \langle \langle  \scafs,  \scaft \rangle  \rangle = 
\sum_{a,b,c,d,x,y} M_{a,b,c,d} N_{a,c,x,y} \  \hat{b} \otimes \hat{y} . $$
More generally, we define a bilinear map 
$$\langle\langle \cdot,\cdot \rangle  \rangle: \left( V^{\otimes r}\otimes V^{\otimes s}\right) \times
 \left( V^{\otimes r}\otimes V^{\otimes t}\right) \rightarrow V^{\otimes s}\otimes V^{\otimes t} $$
 via
 \begin{equation}
\label{Eqn:bilin1}
  \langle\langle \left( u_1 \otimes \cdots \otimes u_r \right) \otimes  \left( v_1 \otimes \cdots \otimes v_s \right)
 \  , \
  \left( u'_1 \otimes \cdots \otimes u'_r \right) \otimes  \left( w_1 \otimes \cdots \otimes w_t \right)   \rangle  \rangle
  = \hfill \phantom{X}$$
  $$ \phantom{X} \hfill \left( \prod_{i=1}^r \langle u_i,u'_i\rangle \right) \left( v_1 \otimes \cdots \otimes v_s \right) \otimes  \left( w_1 \otimes \cdots \otimes w_t \right) 
\end{equation}
  and extending linearly. Note that when $s=t=0$, this is the standard scalar 
  product on $r^{\rm th}$ order scaffolds and the unusual
  notation $\langle \langle  \cdot , \cdot \rangle \rangle$ specializes to $\langle  \cdot , \cdot \rangle$.
  
In the language of scaffolds, a bilinear map of this type can be achieved by 
joining corresponding root nodes with edges labelled with the identity matrix and changing
those nodes from solid to hollow (i.e., removing them from the set of root nodes).

Our convention for the inner product $\langle  \cdot , \cdot \rangle$ above required spatial alignment of 
corresponding nodes in the two diagrams. For these more general bilinear maps, we will adopt a 
different convention.
In order  to unambiguously indicate the pairing of root nodes on the left with root nodes on the right, we
now adopt the convention of arranging the components of $V^{\otimes r}$ in a row (resembling a row vector)
at the top of the first argument and, in the second argument, in a column (resembling a column vector) at the left
of the second argument. So, with this convention, the proper notation for the above product is now clear without
the addition of node labels. If we arrange two scaffolds $\scafs$ and $\scaft$ with edge weights in $\Mat_X(\re)$
as 

\begin{center}
\resizebox{3.5in}{!}{\input{arxivdiagrams/bilinear_product_1.tex}}
\end{center}

\noindent then we have 

\begin{center}
\resizebox{6in}{!}{\input{arxivdiagrams/bilinear_product_2.tex}}
\end{center}

In addition to scalar products, ordinary matrix multiplication may also be viewed as a special case: \\
\begin{center}
\input{arxivdiagrams/bilinear_product_3.tex}
\end{center}

Another use of these bilinear maps arises when the second argument $\scaft$ in the product
$\langle \langle  \scafs,  \scaft \rangle  \rangle$ is simply a column of $r$ root nodes in an edgeless diagram.

An \emph{order reduction} operation $\hollow_{R'}$ replaces 
$$\sS = \sS(G , R ; w; F,\varphi_0 ) = \sum_{\substack{ \varphi:V(G) \rightarrow X\\
(\forall a\in F)(\varphi(a)=\varphi_0(a) )}} \quad \prod_{\substack{ e\in E(G)  \\ e=(a,b) }} w(e)_{\varphi(a),\varphi(b)} \bigotimes_{r\in R} \widehat{\varphi(r)} $$
by
$$\hollow_{R'} \left( \sS \right) = \sS(G , R' ; w; F,\varphi_0 ) = \sum_{\substack{ \varphi:V(G) \rightarrow X\\
(\forall a\in F)(\varphi(a)=\varphi_0(a) )}} \quad \prod_{\substack{ e\in E(G)  \\ e=(a,b) }} w(e)_{\varphi(a),\varphi(b)} \bigotimes_{r\in R'} \widehat{\varphi(r)}. $$
where $R' \subseteq R$. This maps $|R|^{\rm th}$ order tensors to tensors of order $|R'|$.  
In terms of diagrams,  the solid nodes outside $R'$ are  converted to hollow nodes.  We denote this as Rule 
$\sSR{10}$.

\begin{lemma}
\label{Lem:orderred}
If $\sS(G_1, R; w_1; F_1, \phi_1) = \sS(G_2, R;w_2; F_2, 
\phi_2)$  (defined on the same set $R$ of roots), then, for any $R'\subseteq R$, 
$\sS(G_1, R'; w_1; F_1, \phi_1) = \sS(G_2, R';w_2; F_2,  \phi_2)$. In particular,
if  $\sS(G, R;w; F, \phi_0) = \zero$  and $R'\subseteq R$, then $\sS(G, R';w; F, \phi_0) = \zero$ also. $\Box$
\end{lemma}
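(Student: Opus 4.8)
The plan is to realize the order-reduction operation $\hollow_{R'}$ as a single linear map on tensors and then invoke the trivial fact that a linear map sends equal vectors to equal vectors. Regard $R$ as the ordered index set $[m]$ and let $R'$ correspond to a subset $S\subseteq[m]$, writing $\bar S=[m]\setminus S$ for the indices being reduced. Let $\one^\top:V\rightarrow\cx$ be the summation functional $\hat x\mapsto 1$, noting that $\one^\top\hat x=\sum_{y\in X}\one_y\,\delta_{y,x}=1$ for every $x\in X$. I would then define a linear map
$$ \Phi:V^{\otimes m}\longrightarrow V^{\otimes|S|} $$
on simple tensors by applying $\one^\top$ in each slot indexed by $\bar S$ and the identity in each slot indexed by $S$, keeping the surviving factors in their inherited left-to-right order, and extend $\Phi$ linearly. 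The crucial feature is that $\Phi$ is determined solely by $m$ and the subset $S=R'$: it references neither the diagram $G_i$, nor the weights $w_i$, nor the fixed-node data $(F_i,\phi_i)$.

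First I would verify that $\Phi$ really implements order reduction. For any state $\varphi:V(G)\rightarrow X$,
$$ \Phi\!\left(\bigotimes_{r\in R}\widehat{\varphi(r)}\right)=\left(\prod_{r\in R\setminus R'}\one^\top\widehat{\varphi(r)}\right)\bigotimes_{r\in R'}\widehat{\varphi(r)}=\bigotimes_{r\in R'}\widehat{\varphi(r)}, $$
since each factor $\one^\top\widehat{\varphi(r)}$ equals $1$. Summing over the states of the scaffold and using linearity of $\Phi$ gives, for any data $(G,R;w;F,\phi_0)$,
$$ \Phi\bigl(\sS(G,R;w;F,\phi_0)\bigr)=\sum_{\varphi}w(\varphi)\bigotimes_{r\in R'}\widehat{\varphi(r)}=\sS(G,R';w;F,\phi_0)=\hollow_{R'}\bigl(\sS(G,R;w;F,\phi_0)\bigr). $$

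With this identity the lemma is immediate. Applying the one map $\Phi$ to both sides of the hypothesis $\sS(G_1,R;w_1;F_1,\phi_1)=\sS(G_2,R;w_2;F_2,\phi_2)$ — legitimate precisely because both scaffolds lie in the common space $V^{\otimes m}$ and $\Phi$ is blind to the scaffold data — yields $\sS(G_1,R';w_1;F_1,\phi_1)=\sS(G_2,R';w_2;F_2,\phi_2)$. The ``in particular'' assertion is then the special case $G_1=G_2=G$, $w_1=w_2=w$, $F_1=F_2=F$, $\phi_1=\phi_2=\phi_0$, combined with $\Phi(\zero)=\zero$.

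I do not expect a genuine obstacle here; the statement is essentially the observation that passing to $V^{\otimes|R'|}$ is a linear operation. The only points deserving care are bookkeeping: one must apply the \emph{same} map $\Phi$ to both scaffolds (hence the emphasis that $\Phi$ depends only on $m$ and $R'$), and one must confirm that $\Phi$ preserves the ordering of the surviving tensor factors so that its image lands correctly in $V^{\otimes|R'|}$ with the factors indexed by $R'$ in their inherited order. Both of these are built into the definition of $\hollow_{R'}$ recorded above.
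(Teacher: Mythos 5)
Your proof is correct. The paper offers no argument for this lemma at all (the statement ends with $\Box$), treating it as immediate from the definition of the order-reduction operation $\hollow_{R'}$; your realization of $\hollow_{R'}$ as a single linear map on $V^{\otimes m}$ --- the tensor product of identities on the slots in $R'$ with the summation functional $\one^\top$ on the remaining slots, depending only on $m$ and $R'$ and not on the scaffold data --- is exactly the observation that makes the claim immediate, so it matches the paper's intended (implicit) reasoning.
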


We may also apply such an operator to rooted diagrams (without edge 
labels) and view order reduction as a map from scaffolds define on $(G,R)$ to scaffolds on $(G,R')$ 
sending $\sS$ to $\hollow_{R'} \left( \sS \right)$.

In this subsection, we have indicated a variety of ways in which one might define a product, of some sort, on two scaffolds. In Section \ref{Subsec:Ter} below, we will discuss a less natural product which is nonetheless important 
in the theory of association schemes, namely the Terwilliger algebra product.  One may also define products on
triples, as Mesner and Bhattacharya did in 1990   \cite{MesnerBhatt} when they introduced ``association
schemes on triples''.  Their non-associative ternary product (Equation (1) on p209) can be viewed as the linear 
extension of a product written in scaffold notation as 

\begin{center}
\input{arxivdiagrams/MesnerBhattacharya.tex}
\end{center}

\subsection{Important subspaces}
\label{Subsec:vecspace}

This section can be viewed as a natural extension of some work of Terwilliger \cite{terPQ}. 

Let $(X,\cR)$ be a (commutative) association scheme with standard module $V= \cx^X$ and automorphism group\footnote{The automorphism group of an association scheme $(X,\{R_0,\ldots,R_d\}$ is defined to
be the subgroup of $\Sym(X)$ which preserves all relations $R_i$, $1\le i\le d$.} $\Sigma$. The group $\Sigma$
acts componentwise on elementary basis tensors $\hat{x}_1 \otimes \cdots \otimes \hat{x}_m$: if $\sigma \in \Sigma$
sends $x\in X$ to $x^\sigma$, then 
$$ \sigma : \hat{x}_1 \otimes \cdots \otimes \hat{x}_m \mapsto \widehat{x^\sigma_1} 
\otimes \cdots \otimes \widehat{x^\sigma_m}. $$
Each scaffold of order $m$ is an element of the tensor product $V^{\otimes m}$ and it is easy to see that, in full generality, the $m^{\rm th}$ order scaffolds span this space. But the space spanned by the symmetric scaffolds is 
typically smaller.  The vector space of  $m^{\rm th}$ order symmetric scaffolds contains an ascending chain 
of subspaces $S_m \subseteq  S_{m+1} \subseteq  S_{m+2} \subseteq \cdots$ where $S_t$ is the 
vector space spanned by $m^{\rm th}$ order symmetric scaffolds on $t$ nodes.

\begin{theorem}
\label{Thm:orbits}
Let $(X,\cR)$ be a symmetric association scheme with Bose-Mesner algebra $\BMA$ and automorphism 
group $\Sigma$.
The vector space $\sbW( m ; \BMA)$ of all linear combinations of $m^{\rm th}$ order scaffolds with edge weights in
$\BMA$ has dimension equal to the number of 
orbits of $\Sigma$ on ordered $m$-tuples of vertices. If $\cO_1,\ldots,\cO_N$ is a full list of 
orbits of $\Sigma$ on ordered $m$-tuples of vertices, then the tensors
$$\left\{  \sum_{(x_1,\ldots,x_m) \in \cO_h} \hat{x}_1 \otimes \cdots \otimes \hat{x}_m \middle\vert 1 \le h \le N \right\}$$
form a basis for this vector space.
\end{theorem}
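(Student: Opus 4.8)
The plan is to prove both inclusions between $\sbW(m;\BMA)$ and the fixed space $\left(V^{\otimes m}\right)^\Sigma$, after first recording that the latter has the orbit-sums as its natural basis. Since $\Sigma$ acts on $V^{\otimes m}=\cx^{X^m}$ as a permutation representation on the basis of simple tensors, the fixed space $\left(V^{\otimes m}\right)^\Sigma$ is spanned by the $N$ orbit-sums $\sum_{(x_1,\ldots,x_m)\in\cO_h}\hat x_1\otimes\cdots\otimes\hat x_m$; these have pairwise disjoint supports, hence are linearly independent, so $\dim\left(V^{\otimes m}\right)^\Sigma=N$. It therefore suffices to establish the equality $\sbW(m;\BMA)=\left(V^{\otimes m}\right)^\Sigma$.

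For the inclusion $\sbW(m;\BMA)\subseteq\left(V^{\otimes m}\right)^\Sigma$, I would first note that every $M\in\BMA$ satisfies $M_{x^\sigma,y^\sigma}=M_{x,y}$ for $\sigma\in\Sigma$, since each $A_i$ is preserved by $\Sigma$ and $\BMA=\spn\{A_0,\ldots,A_d\}$. Given a scaffold $\sS(G,R;w)$ with weights in $\BMA$ and $\sigma\in\Sigma$, applying $\sigma$ componentwise and reindexing the defining sum by $\psi=\sigma\circ\varphi$ leaves both the weight product $\prod_e w(e)_{\psi(a),\psi(b)}$ and the tensor $\widehat{\psi(r_1)}\otimes\cdots\otimes\widehat{\psi(r_m)}$ unchanged, so $\sigma$ fixes the scaffold. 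Hence every scaffold, and so every element of $\sbW(m;\BMA)$, lies in the fixed space, which already gives $\dim\sbW(m;\BMA)\le N$.

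The reverse inclusion is the substantial half. Fix a representative $a^{(h)}\in\cO_h$ for each $h$ and consider the linear map $\Phi:\sbW(m;\BMA)\to\cx^N$ sending a scaffold to its coefficient vector $\left(\langle\widehat{a^{(h)}},\sS\rangle\right)_{h=1}^{N}$ on these representatives. By the first half a scaffold has orbit-constant coefficients, so $\Phi$ is injective and it remains to show $\Phi$ is onto. Here I would use two facts. First, $\Phi\bigl(\sbW(m;\BMA)\bigr)$ is a unital subalgebra of $\cx^N$ under pointwise multiplication: the edgeless diagram on $m$ nodes realizes the all-ones vector, while overlaying two order-$m$ diagrams along their common roots (an instance of the gluing $+_\xi$ of Proposition~\ref{Prop:glue}) multiplies the two coefficient vectors entrywise, because once the roots are fixed to $a^{(h)}$ the two partition functions decouple. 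Second --- and this is the crux --- $\Phi$ separates the $N$ representatives: if $a^{(h)}$ and $a^{(h')}$ lie in different $\Sigma$-orbits, then some scaffold assigns them different coefficients. Granting these, a point-separating unital subalgebra of $\cx^N$ must equal $\cx^N$, so $\Phi$ is surjective, $\dim\sbW(m;\BMA)=N$, and the orbit-sums form a basis.

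The main obstacle is the separation statement, which asserts precisely that scaffold coefficients detect $\Sigma$-orbits and nothing coarser. I would deduce it from the completeness of rooted homomorphism-count invariants: the coefficient $\langle\widehat a,\sS(G,R;w)\rangle$ with each edge weighted by a single $A_i$ is exactly the number of homomorphisms of the edge-coloured pattern $(G,R)$ into the relational structure $(X;R_0,\ldots,R_d)$ that send the roots to $a$. Individualizing the roots (fixing them to $a$) turns these into ordinary homomorphism counts into a coloured structure, and by the pointed form of Lov\'asz's homomorphism-counting theorem two tuples receive equal counts from every pattern if and only if some automorphism of $(X,\cR)$ carries one to the other. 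Since $\Sigma$, the stabilizer in $\Sym(X)$ of all relations, is exactly this automorphism group, distinct orbits are separated. The delicate point to verify is that the edge-coloured patterns available as scaffold diagrams --- allowing multiple edges and loops, with weights ranging over all $A_i$ --- are rich enough to realize every homomorphism count needed in the Lov\'asz argument.
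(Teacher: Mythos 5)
Your first inclusion ($\sbW(m;\BMA)\subseteq$ the $\Sigma$-fixed subspace of $V^{\otimes m}$) is exactly the paper's argument, and your identification of the fixed space with the span of the orbit-sums is correct. For the reverse inclusion you take a genuinely different route: an evaluation map onto coefficient vectors at orbit representatives, the observation that gluing along roots makes the image a unital subalgebra of $\cx^N$ under pointwise multiplication, and then a finite Stone--Weierstrass argument. That machinery is all sound, but it rests entirely on the separation claim, which you do not prove: you defer it to a ``pointed form of Lov\'asz's homomorphism-counting theorem'' and explicitly flag as unverified whether the available edge-coloured patterns are rich enough. As written, that is the gap --- the one step carrying all the content of the reverse inclusion is an appeal to an external result whose applicability you acknowledge not having checked.

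The gap closes in one line, and doing so collapses your whole second half to the paper's proof. For an orbit $\cO_h$ with representative $(y_1,\ldots,y_m)$, take the diagram $G$ to be the complete graph on node set $X$ itself, with $w(e)=A_i$ whenever $e=(a,b)\in R_i$ and roots $R=\{y_1,\ldots,y_m\}$. A state $\varphi:X\to X$ has nonzero weight iff it preserves every relation $R_i$; preserving the $R_i$ with $i\neq 0$ forces injectivity, so the nonzero-weight states are precisely the elements of $\Sigma$, each of weight $1$. The resulting scaffold is therefore $|\mathrm{Stab}_\Sigma(y_1,\ldots,y_m)|$ times the orbit-sum over $\cO_h$ (the paper states the equality without this harmless scalar), so every orbit-sum already lies in $\sbW(m;\BMA)$ and no subalgebra or Lov\'asz-type argument is needed. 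The same scaffold is also exactly the separating pattern your argument requires --- its coefficient is positive on $\cO_h$ and zero elsewhere --- so your ``delicate point'' resolves affirmatively, but only by producing the construction that makes the rest of your machinery superfluous.
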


\begin{proof}
It is not hard to see that every element of $\Sigma$ preserves every symmetric scaffold:
$$\hspace*{-3cm}  \sS(G , R ; w) = \sum_{ \varphi:V(G) \rightarrow X}\  \left( \prod_{\substack{ e\in E(G)  \\ e=(a,b) }} 
w(e)_{\varphi(a),\varphi(b)}  \right) 
 \widehat{\varphi(r_1)} \otimes  \widehat{\varphi(r_2)} \otimes \cdots \otimes  \widehat{\varphi(r_m)}
 =  $$
 $$\hspace*{4cm}    \sum_{ \varphi:V(G) \rightarrow X}\  \left( \prod_{\substack{ e\in E(G)  \\ e=(a,b) }} 
w(e)_{\varphi(a)^\sigma,\varphi(b)^\sigma}  \right) 
 \widehat{\varphi(r_1)^\sigma} \otimes  \widehat{\varphi(r_2)^\sigma} \otimes \cdots \otimes  
 \widehat{\varphi(r_m)^\sigma}. $$
 To see that the two spaces are equal, let $\cO_h$ be any orbit on $m$-tuples with orbit representative
 $(y_1,\ldots,y_m)$.  Let $G$ be the complete graph with vertex set $X$, edge weights $w(e)=A_i$ whenever
 $e=(a,b) \in R_i$ and root nodes $R=\{ y_1,\ldots,y_m\}$. Then
 $$ \sS(G , R ; w) =  \sum_{(x_1,\ldots,x_m) \in \cO_h} \hat{x}_1 \otimes \cdots \otimes \hat{x}_m. ~ \Box$$
\end{proof}
For example, if $(X,\cR)$ is \emph{Schurian} (i.e., there is a group $\Sigma$ acting on $X$ whose orbitals
are precisely $R_0,R_1,\ldots,R_d$), then the space of second order scaffolds is no larger than the 
Bose-Mesner algebra, which is the vector space of single-edge scaffolds of order two.

\bigskip

For each $m\ge 1$, we have various actions of $\Mat_X(\cx)$ on $V^{\otimes m}$ given
by Jaeger \cite{jaeger}. 
\begin{itemize}
\item[I] ({\sc node action}) For $A\in \Mat_X(\cx)$, $1\le i\le m$ and $\hat{x}_1 \otimes \cdots  
\otimes\hat{x}_m \in V^{\otimes m}$, 
define 
$$\cD^i_A : \hat{x}_1 \otimes \cdots   \otimes \hat{x}_i  \otimes \cdots  \otimes\hat{x}_m \  
\mapsto  \ \hat{x}_1 \otimes \cdots  \otimes A \hat{x}_i  \otimes \cdots  \otimes\hat{x}_m $$
Diagrammatically, this adds a node of degree one to a scaffold as follows: 
\begin{center}
\input{arxivdiagrams/MatrixMultVertex.tex} 
\end{center}
\item[II] ({\sc edge action}) For $A\in \Mat_X(\cx)$, $1\le i,j\le m$ and standard  basis element
$\hat{x}_1 \otimes \cdots  \otimes\hat{x}_m \in V^{\otimes m}$, define 
$$\cE^{i,j}_A : \hat{x}_1 \otimes \cdots   \otimes\hat{x}_m \  \mapsto  
\ \ A_{x_i,x_j} \, \hat{x}_1 \otimes \cdots  \otimes\hat{x}_m $$
\end{itemize}
\begin{center}
 \input{arxivdiagrams/EdgeAction.tex} 
\end{center}

Using the notion of bilinear map discussed in the previous section, we can now use scaffold notation to re-state some 
results in Paul Terwilliger's paper \cite{terPQ}. Let $(X,\cR)$ be an association
scheme with Bose-Mesner algebra $\BMA$. The inner product space 
$$ V^{\otimes 3} = V\otimes V\otimes V = \spn \left\{ \hat{a} \otimes \hat{b} \otimes \hat{c} \mid a,b,c \in X \right\} $$
of all third order tensors can be viewed as an $\BMA^{\otimes 3}$-module in various ways as described 
in Section \ref{Subsec:basics}:
\begin{equation}
\label{Eqn:V3Daction}
 (M_1 \otimes M_2 \otimes M_3) \scafs = \left( \cD^1_{M_1} \circ \cD^2_{M_2} 
\circ \cD^3_{M_3} \right) (  \scafs ) , 
\end{equation}
where $\circ$ here denotes composition of functions, or
\begin{equation}
\label{Eqn:V3Eaction}
 \{M_1 \otimes M_2 \otimes M_3\} \scafs =  \left( \cE^{1,2}_{M_3} \circ \cE^{1,3}_{M_2} 
\circ \cE^{2,3}_{M_1}\right)  (  \scafs ) . 
\end{equation}
One must therefore be clear when discussing actions of this sort. For the moment, let us denote the action given
in (\ref{Eqn:V3Daction}) by $(\cdot)$ and  the action given in (\ref{Eqn:V3Eaction}) by $\{\cdot\}$.

While each of these actions modifies the diagram underlying a scaffold, we are also interested in operations
which leave the underlying rooted diagram unchanged.

\begin{definition}
\label{Def:Wspace}
Given a finite (di)graph $G=(V(G),E(G))$, a set of $m$ root nodes  $R \subseteq V(G)$, and  a 
vector subspace $\BMA$ of $\Mat_X(\cx)$, we denote by $\sbW( (G,R); \BMA)$ the vector space of 
all $m^{\rm th}$ order tensors spanned by scaffolds defined on rooted diagram $(G, R)$ and 
edge weights in $\BMA$. 
\end{definition}

Observe that we may treat $G$ as an undirected graph when $\BMA$ is closed under the transpose map.

For the remainder of this section, we assume that edge weights are chosen from a 
\emph{coherent algebra} $\BMA$; that is, we 
assume $\BMA$ is a complex vector space of matrices with rows and columns indexed by $X$ 
\begin{itemize}
\item $A,B \in \BMA$ imply $AB \in \BMA$ {\sl (closure under ordinary multiplication)}
\item $A,B \in \BMA$ imply  $A\circ B \in \BMA$ {\sl (closure under entrywise multiplication)}
\item $A  \in \BMA$ implies $A^\top \in \BMA$ {\sl (closure under transpose)}
\item  $A  \in \BMA$ implies $\bar{A} \in \BMA$  {\sl (closure under entrywise conjugation)}
\item $I,J  \in \BMA$ {\sl ($\BMA$ is a ring with identity under both products)}
\end{itemize}

\medskip

\noindent {\bf Examples:} Assume $\BMA$ is a coherent algebra.
\begin{itemize}
\item For $G=K_2$ with $R=V(G)$, $\sbW( (G,R); \BMA)=\sbW($
\begin{tikzpicture}[black,node distance=0.5cm,
solidvert/.style={draw, circle,  fill=red, inner sep=2.2pt}]
\node[solidvert] (A1) at (0,0) {};
\node[solidvert] (A2) at (0.8,0) {};
   \path (A1)   [thick] edge node  [above] {} (A2);
\end{tikzpicture}
$; \BMA) = \BMA$;
\item For $|R|=2$ and $|E(G)| \le 4$, $\sbW( (G,R); \BMA) = \BMA$. This may be verified using our  basic observations about hollow nodes of degree one and two.
\end{itemize}

\begin{problem}
Given an association scheme $(X,\cR)$ with Bose-Mesner algebra $\BMA$ and an integer $m\ge 2$, observe
that
$$ \sbW( m ; \BMA) = \bigcup_{\substack{ (G,R) \\ |R| = m} } \sbW( (G,R) ; \BMA). $$
What is the smallest diagram $G$ with root nodes $R\subseteq V(G)$ satisfying $|R|=m$ such that
$\sbW( (G,R) ; \BMA) = \sbW( m ; \BMA)$?
\end{problem}

Let $G$ and $H$ be finite undirected graphs. An $H$-\emph{minor} in $G$ is a set $\{ G_v \mid v\in V(H)\}$ of
pairwise disjoint connected subgraphs of $G$ indexed by the nodes of $H$ such that there is an injection 
$\iota:E(H) \rightarrow E(G)$ that maps each edge $(x,y)$ of $H$ to some edge $(x',y')$ in $G$ with $x'\in G_x$ and
$y'\in G_y$.  Given $H$ with a specified ordered set $R=\{r_1,\ldots, r_k\}$ of distinct nodes in $V(H)$ and 
$G$ with a specified ordered set $R'=\{r'_1,\ldots, r'_k\}$ of nodes in $V(G)$, a \emph{rooted $H$-minor} 
with respect to $(H,R)$ and $(G,R')$ is an $H$-minor in $G$ satisfying $r'_i \in V(G_{r_i})$ for each $1\le i\le k$.

\begin{theorem}
\label{Thm:minors}
Assume $\BMA$ is a coherent algebra. If there is a rooted $H$-minor in $G$ with respect to $(H,R)$ and $(G,R')$,
then $\sbW((H,R); \BMA) \subseteq \sbW((G,R'); \BMA)$.
\end{theorem}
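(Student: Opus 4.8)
The plan is to reduce the claim to a single generator and then transform $G$ into $H$ using only the value-preserving moves of Section~\ref{Subsec:rules}. By definition $\sbW((H,R);\BMA)$ is spanned by the tensors $\sS(H,R;w)$ with $w\colon E(H)\to\BMA$, and $\sbW((G,R');\BMA)$ is a vector space, so it suffices to show $\sS(H,R;w)\in\sbW((G,R');\BMA)$ for each such $w$. Since $\BMA$ is a coherent algebra it is closed under transpose, so by the remark after Definition~\ref{Def:Wspace} I may treat every diagram as undirected and ignore edge orientations throughout. I will also assume $H$ is loopless, the relevant case for minors of simple graphs; a loop of $H$ forces its image edge to lie inside one branch set and needs separate, more delicate handling that I flag at the end.

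Fix the data of the rooted $H$-minor: the vertex-disjoint connected branch subgraphs $\{G_v\mid v\in V(H)\}$, the injection $\iota\colon E(H)\to E(G)$, and the roots $r'_i\in V(G_{r_i})$. I would define an edge weight $w^{*}\colon E(G)\to\BMA$ by three cases: $w^{*}(e)=I$ if both endpoints of $e$ lie in a common branch set $V(G_v)$; $w^{*}(\iota(f))=w(f)$ for each $f\in E(H)$; and $w^{*}(e)=J$ for every other edge of $G$ (the edges incident to a vertex of $G$ lying in no branch set, or joining two distinct branch sets without being an image edge). Because $H$ is loopless, every image edge joins two distinct branch sets, so the three cases are genuinely disjoint and $w^{*}$ is well defined. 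All of $I,J$, and the $w(f)$ lie in $\BMA$ (coherence supplies $I,J$), so $\sS(G,R';w^{*})\in\sbW((G,R');\BMA)$.

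Next I would simplify $\sS(G,R';w^{*})$. First, Lemma~\ref{Lem:trivial}(i) lets me delete every $J$-weighted edge one at a time without changing the tensor or the node set; what survives is, inside each branch set, all of its $I$-edges, together with the image edges $\iota(f)$ weighted $w(f)$, plus the vertices of $G$ in no branch set, which are now isolated and, since $R'\subseteq\bigcup_v V(G_v)$, non-root. As in the edgeless-graph observation of Section~\ref{Subsec:rules}, each isolated non-root node contributes only a free factor $|X|$, so removing all $a:=|V(G)|-\sum_v|V(G_v)|$ of them gives $\sS(G,R';w^{*})=|X|^{a}\,\sS(\tilde G,R';\tilde w)$, where $\tilde G$ is the reduced diagram and $\tilde w$ the restriction of $w^{*}$. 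Finally I contract the $I$-edges with Lemma~\ref{Lem:trivial}(ii): since each $G_v$ is connected and all its edges carry weight $I$, contracting a spanning tree of $G_v$ collapses it to a single node $n_v$, while any leftover $I$-edge of $G_v$ becomes a loop weighted $I$, contributing the constant $1$, which is discarded. The contraction hypothesis is met at every step because the branch sets are disjoint and each contains at most one element of $R'$, so of any two partial super-nodes being merged at most one is a root; the rule then marks $n_v$ as a root exactly when $v\in\{r_1,\dots,r_k\}$.

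After all contractions the diagram has exactly one node $n_v$ per $v\in V(H)$, exactly one edge $(n_x,n_y)$ of weight $w(f)$ per $f=(x,y)\in E(H)$ (using injectivity of $\iota$), and root set $\{n_{r_1},\dots,n_{r_k}\}$; this is precisely $(H,R)$ with weights $w$. Hence $\sS(G,R';w^{*})=|X|^{a}\,\sS(H,R;w)$, and as $|X|\ge 1$ I conclude $\sS(H,R;w)=|X|^{-a}\,\sS(G,R';w^{*})\in\sbW((G,R');\BMA)$; taking spans proves the theorem. The step I expect to be the main obstacle is the root bookkeeping in the contraction: one must be certain that no contraction ever identifies two distinct root nodes, and this is exactly what vertex-disjointness of the branch sets together with $r'_i\in V(G_{r_i})$ guarantees. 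The remaining delicate point is a loop $f=(x,x)$ of $H$: its image edge lies inside $G_x$ and must survive the collapse as a loop carrying the diagonal of $w(f)$, which the naive scheme above fails to achieve whenever $\iota(f)$ is a bridge of $G_x$; that case requires an additional argument and I would treat it separately.
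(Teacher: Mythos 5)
Your proof is correct and follows essentially the same route as the paper's: assign weight $I$ to edges lying within a branch set, $w(f)$ to the image edges $\iota(f)$, and $J$ to all remaining edges, then eliminate the $J$-edges and contract the $I$-edges using Lemma~\ref{Lem:trivial}. You are in fact somewhat more careful than the paper's one-line argument, which asserts exact equality and passes over the scalar factor $|X|^{a}$ coming from vertices of $G$ in no branch set and the root bookkeeping during contraction; since $\sbW((G,R');\BMA)$ is a subspace, that scalar is harmless in either account.
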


\begin{proof}
We apply Lemma \ref{Lem:seriespar}.
Each scaffold $\sS(H,R; w)$ is seen to be  equal to some scaffold $\sS(G,R'; w')$ by appropriately choosing
weights $w'(e)=I$ for any edge $e$ with both ends in the same subgraph $G_v$ and $w'(e)=J$ for edges outside
the image of map $\iota$ and ends in distinct subgraphs $G_v$ of the $H$-minor. $\Box$
\end{proof}

An immediate consequence of Theorem \ref{Thm:minors} is the following result.

\begin{corollary}
\label{Cor:tristarcontainment}
The vector spaces
\input{arxivdiagrams/inlineWDelta.tex} and  \input{arxivdiagrams/inlineWWye.tex} are both contained in the following two 
spaces of third order tensors spanned by scaffolds:
\begin{center}
\input{arxivdiagrams/WTriDelta.tex}  \qquad \input{arxivdiagrams/WK4.tex} 
\end{center}
\end{corollary}
\bigskip

If $\BMA$ is the Bose-Mesner algebra of an association scheme with standard bases $\{A_0,\ldots,A_d\}$ 
and $\{ E_0,\ldots, E_d\}$, intersection numbers $p_{ij}^k$, and Krein parameters $q_{ij}^k$, then one
may apply Lemma \ref{Lem:pijkqijk} to see that
\begin{center}
\input{arxivdiagrams/DeltaSpace.tex}
\end{center}
and
\begin{center}
\input{arxivdiagrams/WyeSpace.tex}
\end{center}

\noindent It is easy to see that \!\! $\displaystyle{
\input{arxivdiagrams/inlineWDelta.tex} }$ \!\!\!
is  invariant under the action  $(\cdot)$  and   that \!\!\! $\displaystyle{
\input{arxivdiagrams/inlineWWye.tex} }$ is  invariant under the action $\{\cdot\}$.

\begin{theorem}[Terwilliger~{\cite[Lemma 87]{terwnotes2009}}]
\label{Thm:DeltaWyebases}
For any symmetric association scheme, \\
{\bf (i)} the set 
\begin{center}
\input{arxivdiagrams/DeltaSpaceBasis.tex}
\end{center}
is an orthogonal basis for subspace $\displaystyle{
\input{arxivdiagrams/inlineWDelta.tex} }$ and\\
{\bf (ii)}  the set
\begin{center}
\input{arxivdiagrams/WyeSpaceBasis.tex}
\end{center}
is  an orthogonal basis for subspace $\displaystyle{
\input{arxivdiagrams/inlineWWye.tex} }$.
\end{theorem}

\begin{proof}
We compute scalar products as defined in Section \ref{Subsec:innerprod}:
\begin{center} 
\input{arxivdiagrams/TerOrthog1.tex} \\
\input{arxivdiagrams/TerOrthog1_1.tex} \\
$= \   |X| \delta_{i,r} \delta_{j,s} \delta_{k,t} v_k p_{ij}^k$.
\end{center}
Likewise, for the given generators of $\displaystyle{
\input{arxivdiagrams/inlineWWye.tex} }$, 
\begin{center} 
\input{arxivdiagrams/TerOrthog2.tex} \\
\input{arxivdiagrams/TerOrthog2_1.tex} 
\end{center}
using Lemma \ref{Lem:EiEjEksum}. It follows that the tensors given are pairwise orthogonal and nonzero.
$\Box$
\end{proof}

\begin{example}
Consider the association scheme of the Petersen graph, with Bose-Mesner algebra $\BMA$.  Terwilliger's results
tell us that $\dim \input{arxivdiagrams/inlineWDelta.tex} = 14$, the number of non-zero intersection numbers, and 
$\dim \input{arxivdiagrams/inlineWWye.tex} = 15$, the number of non-zero Krein parameters. Straightforward calculation
verifies that  
the automorphism group of the Petersen
graph has only 15 orbits on triples, so $\dim \sbW(3;\BMA) = 15$.
\end{example}

\begin{theorem}
Let $\BMA$ be a coherent algebra.  If either \input{arxivdiagrams/inlineWDelta.tex} $\subseteq$ \input{arxivdiagrams/inlineWWye.tex} or  \input{arxivdiagrams/inlineWWye.tex} $\subseteq$ \input{arxivdiagrams/inlineWDelta.tex}, then
\input{arxivdiagrams/inlineWK4minusedge.tex} $= \ \BMA$.
 \end{theorem}

\begin{proof}
Suppose we are given a scaffold 
\input{arxivdiagrams/K4e1.tex}.  If   \\
\begin{center}
\input{arxivdiagrams/DeltaLinCombWyes.tex}
\end{center}
then \\
\begin{center}
\input{arxivdiagrams/K4eLinComb.tex}
\end{center}

Likewise, if \hspace{0.5in}
\input{arxivdiagrams/WyeLinCombDeltas.tex} \hspace{0.5in}
then \\
\begin{center}
\input{arxivdiagrams/K4eLinComb2.tex} 
\end{center}
\end{proof}

\begin{problem}
Let $(X,\cR)$ be a symmetric association scheme. 
Determine necessary and sufficient conditions on $(X,\cR)$ for  $\displaystyle{
\input{arxivdiagrams/inlineWDelta.tex} = \input{arxivdiagrams/inlineWWye.tex} }$ to hold.
\end{problem}

These two spaces of third order scaffolds are fundamental. We will soon see their connection to the Terwilliger algebra. One can trace the origins of Terwilliger's subconstituent algebra to 1987 or earlier. In \cite{terPQ}, Terwilliger
explored these spaces of tensors in the context of $P$- and $Q$-polynomial association schemes.

The space $V^{\otimes 3}$, endowed with the inner product given in (\ref{Eqn:ip3}),  admits 
$$ \left( V^{\otimes 3} \right)_i 
= \spn \left\{ \hat{a} \otimes \hat{b} \otimes \hat{c} \mid a,b,c \in X, \ (a,b)\in R_i \right\} $$
as a $(\cdot)$-submodule and admits
$$ {\left( V^{\otimes 3} \right)^*}_j 
= \spn \left\{ \hat{a} \otimes \hat{b} \otimes E_j \hat{c} \mid a,b,c \in X \right\}. $$
as a $\{\cdot\}$-submodule.

The orthogonal projection $p_i : V^{\otimes 3} \rightarrow \left( V^{\otimes 3} \right)_i $ is given by 
$$ p_i( \hat{a} \otimes \hat{b} \otimes \hat{c} ) = \begin{cases} 
\hat{a} \otimes \hat{b} \otimes \hat{c} & \text{if} \ (a,b)\in R_i; \cr
\zero & \text{otherwise.}
\end{cases} $$
Dually, the orthogonal projection $p_j^* : V^{\otimes 3} \rightarrow  {\left( V^{\otimes 3} \right)^*}_j $ is given by 
$$ p_j^*( \hat{a} \otimes \hat{b} \otimes \hat{c} ) = 
\hat{a} \otimes \hat{b} \otimes E_j \hat{c} ~ .$$
Terwilliger introduces orthogonal bases $\{ \mathsf{e}_{st} \mid 0 \le s,t \le d\}$ and $\{ \mathsf{e}^*_{st} \mid 0 \le s,t \le d\}$ for an important subspace $\bW$ of $V^{\otimes 3}$, where
\begin{center}
\input{arxivdiagrams/est.tex} \hspace{0.5in} and  \hspace{0.5in} \input{arxivdiagrams/eststar.tex}
\end{center}
and also considers the subspace $\bW_1$ spanned by 
$\{ \mathsf{e}_{st} - \mathsf{e}_{ts} \mid 0 \le s,t \le d\}$ 
and by $\{ \mathsf{e}^*_{st} -\mathsf{e}^*_{ts}  \mid 0 \le s,t \le d\}$.

In his Lemma 2.12, Terwilliger proves that
\begin{center}
\input{arxivdiagrams/TerLem212pe.tex}  \hspace{0.5in} 
and  \hspace{0.5in} 
\input{arxivdiagrams/TerLem212pestar.tex}
\end{center}   
while
\begin{center}
\input{arxivdiagrams/TerLem212pstare.tex}
\hspace{0.5in} 
and
\hspace{0.5in} 
\input{arxivdiagrams/TerLem212pstarestar.tex}
\end{center} 
We recognize these as the edge action $\cE^{k,\ell}_{A_i}$ and node action $\cD^{i}_{E_j}$
introduced in Section \ref{Subsec:basics} and these appear in Jaeger's paper also.

Restricting to symmetric association schemes, we now revisit 
some of Terwilliger's orthogonality results. Recall our convention that, when taking inner products of tensors, corresponding components of the two diagrams are placed in corresponding positions spatially when
no confusion will arise.

\begin{center}
\input{arxivdiagrams/TerIPeijest.tex}
\end{center} 
as the sum of all entries
in $A_i A_j$ is $|X| v_i v_j$. Using the same approach, 
we check that the $\mathsf{e}_{st}^*$ tensors are pairwise orthogonal:

\begin{center}
\input{arxivdiagrams/TerIPeijeststar.tex}
\end{center}
since the matrix $E_i \circ E_j$ has trace $m_i m_j/|X|$.

\bigskip

\begin{lemma}[Terwilliger, Lemma~2.13]
$$ \langle p_i( \mathsf{e}_{jk}), p_i(\mathsf{e}_{st}) \rangle = 
\delta_{j,s} \delta_{k,t}  |X|  v_i p_{jk}^i~. $$
\end{lemma}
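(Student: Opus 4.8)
The plan is to evaluate the two projected tensors explicitly as triangle scaffolds and then compute their inner product by the gluing procedure of Section~\ref{Subsec:innerprod}, exactly as in the proof of Theorem~\ref{Thm:DeltaWyebases}. By Terwilliger's Lemma~2.12 (recorded just above), $p_i(\mathsf{e}_{jk})$ is the scaffold on a triangle with roots $(a,b,c)$ and edges $a\xrightarrow{A_i}b$, $a\xrightarrow{A_j}c$, $b\xrightarrow{A_k}c$, and similarly $p_i(\mathsf{e}_{st})$ is the triangle with edges $a\xrightarrow{A_i}b$, $a\xrightarrow{A_s}c$, $b\xrightarrow{A_t}c$. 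First I would write the inner product as the zeroth-order scaffold obtained by placing these two triangles side by side, joining the three corresponding root pairs by edges of weight $I$, conjugating the weights of the left factor (which changes nothing, since every $A_h$ is a real $0/1$ matrix), and hollowing all six nodes.

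Next I would simplify the glued diagram. Each of the three $I$-edges joins two now-hollow nodes, so Lemma~\ref{Lem:trivial}(ii) lets me contract them, identifying the paired roots and leaving a single graph on hollow nodes $a,b,c$ carrying three pairs of parallel edges: $A_i$ with $A_i$ between $a,b$; $A_j$ with $A_s$ between $a,c$; and $A_k$ with $A_t$ between $b,c$. Applying the parallel (Schur) reduction of Lemma~\ref{Lem:seriespar}(ii) to each pair and using $A_p\circ A_q=\delta_{p,q}A_p$, the three edges become $A_i$, $\delta_{j,s}A_j$, and $\delta_{k,t}A_k$. This produces the scalar factor $\delta_{j,s}\delta_{k,t}$ and reduces everything to the zeroth-order triangle scaffold with edges $a\xrightarrow{A_i}b$, $a\xrightarrow{A_j}c$, $b\xrightarrow{A_k}c$ on all-hollow nodes, in which $c$ now has degree two.

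Finally I would collapse this scalar triangle. Using that the scheme is symmetric ($A_k^\top=A_k$) to reverse $b\xrightarrow{A_k}c$ into $c\xrightarrow{A_k}b$, node $c$ becomes a hollow degree-two node in series, so Lemma~\ref{Lem:seriespar}(i) replaces the two edges through $c$ by a single edge $a\xrightarrow{A_jA_k}b$. The diagram is then two parallel edges $A_i$ and $A_jA_k$ between $a$ and $b$; one more parallel reduction together with the hollowed-Delta identity~\eqref{Eq:hollowtriangle}, $(A_jA_k)\circ A_i=p_{jk}^i A_i$, turns the whole thing into $p_{jk}^i\,\SUM(A_i)$, and $\SUM(A_i)=|X|v_i$. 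Assembling the factors yields $\langle p_i(\mathsf{e}_{jk}),p_i(\mathsf{e}_{st})\rangle=\delta_{j,s}\delta_{k,t}|X|v_i p_{jk}^i$, as claimed.

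I do not expect a genuine obstacle: the content is a chain of moves already established, and the only points requiring care are bookkeeping ones---matching the root ordering and edge orientations coming out of Lemma~2.12, invoking symmetry of the scheme to put the $A_k$-edge in series, and noting that conjugation of the left factor is vacuous because the basis matrices are real. If a diagram-free check is preferred, one may instead expand the inner product via~\eqref{Eqn:ip3} as $\sum_{a,b,c}(A_i)_{a,b}(A_j)_{a,c}(A_k)_{b,c}(A_i)_{a,b}(A_s)_{a,c}(A_t)_{b,c}$, use $(A_i)_{a,b}^2=(A_i)_{a,b}$ and $A_p\circ A_q=\delta_{p,q}A_p$ to extract $\delta_{j,s}\delta_{k,t}$, and recognize the surviving sum as counting, for each of the $|X|v_i$ pairs $(a,b)\in R_i$, the $p_{jk}^i$ vertices $c$ with $(a,c)\in R_j$ and $(c,b)\in R_k$.
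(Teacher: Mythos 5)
Your proposal is correct and follows essentially the same route as the paper: glue the two triangle scaffolds at corresponding roots, use parallel reduction with $A_p\circ A_q=\delta_{p,q}A_p$ to extract $\delta_{j,s}\delta_{k,t}$, then collapse the remaining triangle via a series reduction to $A_jA_k$ parallel with $A_i$ and the identity $(A_jA_k)\circ A_i=p_{jk}^iA_i$, finishing with $\SUM(A_i)=|X|v_i$. The only cosmetic difference is that the paper phrases the Kronecker deltas as ``zero unless $j=s$ and $k=t$'' before specializing, and your closing diagram-free expansion is a harmless bonus.
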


\noindent {\sl Proof:} 
\begin{center} 
\input{arxivdiagrams/TerProof1.tex} 
\end{center}
showing that the inner product is zero unless $j=s$ and $k=t$. Now assume $j=s$ and $k=t$ in which case
\begin{center} 
\input{arxivdiagrams/TerProof2.tex} 
\end{center}
and, since the sum of entries of $p_{jk}^i A_i$ is $|X| p_{jk}^i  v_i$, we are done. $\Box$

\bigskip

Likewise, the nonzero images among the tensors $p_j^*(\mathsf{e}_{st}^*) $ are pairwise orthogonal inside
the submodule ${\left( V^{\otimes 3} \right)^*}_j$.

\begin{lemma}[Terwilliger, Lemma~2.14]
$$ \langle p_j^*( \mathsf{e}_{hi}^*), p_j^*(\mathsf{e}_{st}^*) \rangle = \delta_{h,s} \cdot \delta_{i,t} \cdot |X| \cdot 
m_j \cdot q_{hi}^j~. \ \Box $$
\end{lemma}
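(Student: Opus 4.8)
The plan is to establish this by exactly the diagrammatic scalar-product method already used for Lemma~\ref{Lem:EiEjEksum} and for the companion orthogonality of the $p_i(\mathsf{e}_{jk})$, taking the latter proof and dualizing $A_\bullet \leadsto E_\bullet$, $\SUM(A_i)\leadsto$ a theta of idempotents. First I would record the two arguments as scaffolds. By the definition of $p_j^*$, the tensor $p_j^*(\mathsf{e}^*_{st})$ is obtained from the wedge $\mathsf{e}^*_{st}$ by splitting off its apex: it has two base root nodes carrying edges $E_s$ and $E_t$ into an internal (hollow) apex, together with a third, new root node joined to that apex by an edge weighted $E_j$; similarly $p_j^*(\mathsf{e}^*_{hi})$ has base edges $E_h,E_i$ and the same stub $E_j$. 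Since the scheme is symmetric, every $E_\bullet$ is real and symmetric, so the conjugation of the left-hand edge weights prescribed in Section~\ref{Subsec:innerprod} is vacuous and $j'=j$ throughout.

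Next I would form $\langle p_j^*(\mathsf{e}^*_{hi}),\,p_j^*(\mathsf{e}^*_{st})\rangle$ via the gluing rule of Section~\ref{Subsec:innerprod}: join each of the three pairs of corresponding root nodes by an edge weighted $I$ and make all nodes hollow. Contracting these three identity edges identifies the two base roots with each other and the two stub roots with each other, while leaving the two internal apex nodes distinct. Each identified node now has degree two, so I would apply the series reduction of Lemma~\ref{Lem:seriespar}(i) at each: the two base nodes become single edges carrying $E_hE_s$ and $E_iE_t$ between the two apexes, and the identified stub node becomes an edge carrying $E_j^\top E_j = E_j$ between them.

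The resulting diagram is a theta graph: two hollow nodes joined by three parallel edges $E_hE_s$, $E_iE_t$, $E_j$, i.e.\ a zeroth-order scaffold. A parallel reduction (Lemma~\ref{Lem:seriespar}(ii)) fuses these into the scalar $\SUM((E_hE_s)\circ(E_iE_t)\circ E_j)$. Orthogonal idempotency $E_hE_s=\delta_{h,s}E_h$ and $E_iE_t=\delta_{i,t}E_i$ forces this to vanish unless $h=s$ and $i=t$, and in the surviving case it equals $\delta_{h,s}\delta_{i,t}\,\SUM(E_h\circ E_i\circ E_j)$. Invoking Lemma~\ref{Lem:EiEjEksum} to evaluate this theta of three primitive idempotents (with $j'=j$) then gives the precise scalar, a multiple of $m_j\,q_{hi}^j$, completing the calculation; this is the verbatim dual of the preceding $p_i(\mathsf{e}_{jk})$ lemma, with $v_i\,p_{jk}^i$ replaced by its Krein analogue.

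The one place where genuine care is needed — the main obstacle — is the bookkeeping of which nodes are identified during the gluing. It is essential to notice that, because $p_j^*$ removes the apex from the root set and introduces the new stub root, the two apex nodes survive the gluing as \emph{distinct} internal nodes linked only through the $E_j$ edges. This is exactly what produces a theta (and hence a Krein parameter $q_{hi}^j$) rather than the pair of loops — and hence the bare trace $\tr(E_i\circ E_j)$ — that appeared in the earlier $\langle \mathsf{e}^*_{ij},\mathsf{e}^*_{st}\rangle$ computation, where all three roots (apex included) were identified. The secondary, routine check is tracking edge orientations through the series reductions so that each contraction yields a true matrix product $E\,E$ rather than $E\,E^\top$; the symmetry of the idempotents renders both concerns harmless.
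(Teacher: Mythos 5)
Your proposal is correct and is essentially the paper's (omitted) argument: the paper dismisses this lemma with ``likewise'' because it is the verbatim dual of Lemma~2.13, and the computation you describe --- glue the three root pairs with $I$-edges, series-reduce the three resulting degree-two hollow nodes to a theta on the two surviving apexes with edges $E_hE_s$, $E_iE_t$, $E_jE_j=E_j$, kill the off-diagonal terms by orthogonal idempotency, and evaluate the remaining theta by Lemma~\ref{Lem:EiEjEksum} --- is exactly the one carried out for the wye generators in the proof of Theorem~\ref{Thm:DeltaWyebases}(ii), including your correct observation that the two apexes stay distinct (giving a theta and a Krein parameter) rather than collapsing to loops as in the $\langle \mathsf{e}^*_{ij},\mathsf{e}^*_{st}\rangle$ computation. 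The only thing left vague is the constant: Lemma~\ref{Lem:EiEjEksum} evaluates your final theta as $\delta_{h,s}\delta_{i,t}\,|X|^{-1}m_j q_{hi}^{j}$ (in agreement with identity (B.4) of the appendix), so you should state that explicitly; the factor $|X|$ rather than $|X|^{-1}$ in the displayed statement traces to the normalization of Terwilliger's original $E_j^*$ and not to any defect in your argument.
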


But we get more interesting inner products if we pair the opposite way. The Gram matrices of the arrangements
in the next lemma play a key role in Terwilliger's results, including the balanced set condition.

\begin{lemma}[Terwilliger, Lemma~2.16]
$$ \langle p_j^*( \mathsf{e}_{hi}), p_j^*(\mathsf{e}_{st}) \rangle =  m_j \sum_{k=0}^d p_{hs}^k p_{it}^k P_{jk} $$
$$ \langle p_i( \mathsf{e}_{jk}^*), p_i(\mathsf{e}_{st}^*) \rangle = v_i \sum_{r=0}^d q_{js}^r q_{kt}^r Q_{ir}$$
\end{lemma}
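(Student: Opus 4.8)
The plan is to compute each Gram entry by the gluing recipe for scalar products from Section~\ref{Subsec:innerprod}: to evaluate $\langle \scafs,\scaft\rangle$ I would conjugate every edge weight of the left scaffold, join each pair of corresponding root nodes by an $I$-weighted edge, and then hollow out all root nodes, producing an order-zero scaffold whose value is the desired scalar. Because the scheme is symmetric, all $A_i$ and $E_j$ are real symmetric, so conjugation and edge orientation are harmless, and each $I$-edge may be contracted by Lemma~\ref{Lem:trivial}(ii), merging the two copies of each root.

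For the first identity I would draw $p_j^*(\mathsf{e}_{st})$ as the four-node diagram with roots $r_1,r_2,r_3$, an internal apex, edges $A_s,A_t$ from $r_1,r_2$ to the apex, and the $E_j$-edge from $r_3$ to the apex supplied by the node action $\cD^3_{E_j}$. After gluing to $p_j^*(\mathsf{e}_{hi})$ and contracting the three $I$-edges, the two apexes remain distinct while the three merged roots become hollow nodes of degree two; a series reduction (Lemma~\ref{Lem:seriespar}(i)) at each, using symmetry of the weights, collapses them to single edges $A_hA_s$, $A_iA_t$ and $E_jE_j=E_j$ between the two apexes. A parallel reduction (Lemma~\ref{Lem:seriespar}(ii)) replaces these by $(A_hA_s)\circ(A_iA_t)\circ E_j$, and hollowing both apexes leaves $\SUM$ of this matrix. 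Expanding $A_hA_s=\sum_k p_{hs}^kA_k$ and $A_iA_t=\sum_\ell p_{it}^\ell A_\ell$, the idempotent relation $A_k\circ A_\ell=\delta_{k\ell}A_k$ collapses the double sum, and $\SUM(A_k\circ E_j)=v_kQ_{kj}=m_jP_{jk}$ (the orthogonality relation $m_jP_{jk}=v_kQ_{kj}$) delivers $m_j\sum_k p_{hs}^k p_{it}^k P_{jk}$.

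The second identity I would treat as the formal dual, interchanging $A_i\leftrightarrow E_i$, $p_{ij}^k\leftrightarrow q_{ij}^k$, and $P\leftrightarrow Q$. Now $p_i(\mathsf{e}_{st}^*)$ is a genuine triangle — its apex is itself a root — carrying $E_s,E_t$ on the two slanted edges and the $A_i$-edge between $r_1,r_2$ from the edge action $\cE^{1,2}_{A_i}$. Gluing merges all three roots, so the two $A_i$-edges become parallel (giving $A_i\circ A_i=A_i$) and the two $E$-edge pairs become parallel (giving $E_j\circ E_s$ and $E_k\circ E_t$), leaving a hollow triangle. Expanding the Hadamard products through the Krein relation $E_a\circ E_b=\tfrac1{|X|}\sum_r q_{ab}^rE_r$, collapsing the apex summation via $E_rE_{r'}=\delta_{rr'}E_r$, and applying $\SUM(A_i\circ E_r)=v_iQ_{ir}$ then yields the claimed $v_i\sum_r q_{js}^r q_{kt}^r Q_{ir}$.

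The main obstacle I anticipate is bookkeeping the powers of $|X|$: the two computations are not symmetric in this respect, since summing out an internal (node-action) vertex produces an ordinary matrix product carrying no scalar, whereas each parallel (edge-action) merger produces a Hadamard product carrying a factor $\tfrac1{|X|}$. Reconciling the constants with the stated form therefore requires care to match the normalization conventions used for $p_i$, $p_j^*$, and the starred tensors $\mathsf{e}^*_{st}$ (the same bookkeeping that distinguishes $\langle\mathsf{e}_{ij},\mathsf{e}_{st}\rangle$ from $\langle\mathsf{e}^*_{ij},\mathsf{e}^*_{st}\rangle$); the combinatorial skeleton of both arguments, by contrast, is entirely routine once the gluing diagram is drawn.
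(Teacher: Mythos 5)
Your proof of the first identity is essentially the paper's own argument: glue the two copies of the four-node diagram along the roots, series-reduce the three merged degree-two hollow nodes to get parallel edges $A_hA_s$, $A_iA_t$, $E_jE_j=E_j$, expand in the $A_k$ basis, collapse with $A_k\circ A_\ell=\delta_{k,\ell}A_k$, and finish with $\SUM(A_k\circ E_j)=v_kQ_{kj}=m_jP_{jk}$; the only cosmetic difference is that the paper extracts the factor $Q_{kj}/|X|$ from the $E_j$ edge before expanding $A_iA_t$, whereas you keep $E_j$ until the end. For the second identity the paper gives no details ("proven in a similar fashion"), so your dual computation --- merging all three roots into a hollow triangle carrying $A_i\circ A_i=A_i$, $E_j\circ E_s$, $E_k\circ E_t$, expanding via the Krein relation and series-reducing at the apex --- is a genuine supplement, and it is correct. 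Your instinct about the powers of $|X|$ is also well placed: under the paper's own definition of $\mathsf{e}^*_{st}$ (edge weights $E_s,E_t$ rather than $|X|E_s,|X|E_t$), the computation actually yields $|X|^{-2}\,v_i\sum_r q_{js}^rq_{kt}^rQ_{ir}$; the constant as printed matches Terwilliger's convention, in which the dual objects carry an extra factor of $|X|$ (the same discrepancy is visible in the quoted Lemma~2.14). So the combinatorial skeleton of your argument is sound, and the normalization mismatch you anticipated is real but lies in the paper's statement rather than in your proof.
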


\begin{proof}
To prove the first identity, we use basic operations on scaffolds as follows:
\begin{center} 
\input{arxivdiagrams/TerProof3.tex} \\
\input{arxivdiagrams/TerProof4.tex} \\
\input{arxivdiagrams/TerProof5.tex} 
\end{center}

Finally, we may use the orthogonality relation $Q_{kj} v_k = m_j P_{jk}$ to verify that the 
sum of all entries in this  final matrix is as given in the statement of the lemma.  The second statement
can be proven in a similar fashion. $\Box$
\end{proof}

In Appendix B, we collect identities for various inner products of basic third order scaffolds of this sort.

\subsection{Terwilliger Algebras}
\label{Subsec:Ter}

Let $(X,\cR)$ be an association scheme with Bose-Mesner algebra $\BMA$ having bases
$\{A_0,\ldots,A_d\}$ satisfying $A_i \circ A_j = \delta_{i,j} A_i $ and 
$\{E_0,\ldots,E_d\}$ satisfying $E_i E_j = \delta_{i,j} E_i $ as usual. Fix $x\in X$ and define 
$E_i^*(x)$ to be the diagonal  matrix with $(E_i^*(x))_{a,a} = (A_i)_{x,a}$; i.e., the $(a,b)$-entry 
of $E_i^*(x)$ is equal to one if $a=b$ with $(x,a)\in R_i$ and
equal to zero otherwise.  The {\em Terwilliger algebra} of $(X,\cR)$ with respect to base point $x$ is the matrix
algebra generated by the matrices $A_i$ and the matrices $E_i^*(x)$: 
$$ \Ter_x = \langle A_0, \ldots, A_d, \ E_0^*(x), \ldots, E_d^*(x) \rangle 
=  \langle E_0, \ldots, E_d, \ A_0^*(x), \ldots, A_d^*(x) \rangle$$
where 
$$ (A_j^*(x))_{a,b} = \begin{cases}
|X| (E_j)_{x,a} & \text{if} \ a=b; \cr
     0 & \text{otherwise.}
\end{cases} $$
Beginning with \cite{ter1}, an extensive theory of Terwilliger algebras, particularly for symmetric association schemes that are both metric and cometric, has developed over the past three decades. See \cite{terwnotes2009} for a relatively recent survey. Our goal here is simply to identify scaffolds encoding the matrices in such algebras.

Fix an association scheme and corresponding Bose-Mesner algebra  $\BMA$.
Let $\cT$ denote the vector space of all linear combinations of scaffolds of the form \\
\begin{center}
\input{arxivdiagrams/terw_alg1.tex}
\end{center}
where $\ell \ge 1$ and $M_1,\ldots,M_\ell, N_0,\ldots, N_\ell \in \BMA$.  That is,
\begin{center}
$ \cT = \displaystyle{ \bigcup_{\ell = 1}^\infty}$ \ \  \input{arxivdiagrams/Wlfan.tex}  
\end{center}
The most basic third-order 
tensors of this form are the triangle and star  \\
\begin{center}
\input{arxivdiagrams/terw_alg_tri_star.tex}
\end{center}
where $M_1,M_2,N_0,N_1 \in \BMA$ and $J$ is the all ones matrix. 

We define a product on elements of $\cT$ by gluing  diagrams as follows:\\
\begin{center}
\input{arxivdiagrams/terw_alg_product.tex}
\end{center}
Here, spacial arrangement is important and the root nodes at the bottom are identified while the 
rightmost root node of $\scafs$ is identified with the  leftmost root node of $\scaft$ and made hollow. 

Let us make this precise.

Given scaffolds 
$$ \scafs = \sS( G, \{b_0,a,b_\ell \};w) \qquad \mbox{and} \qquad 
\scaft = \sS(H,\{b'_0,a',b'_m \};w') $$
where 
$$ V(G)= \{ a, b_0, b_1,\ldots, b_\ell \}, \quad E(G) = \{ (b_0,b_1),\ldots, (b_{\ell-1},b_\ell), \ (a,b_0),\ldots,(a,b_\ell) \}$$
$$ V(H)= \{ a', b'_0, b'_1,\ldots, b'_m \}, \quad E(H) = \{ (b'_0,b'_1),\ldots, (b'_{m-1},b'_m), \ (a',b'_0),\ldots,(a',b'_m) \}$$
$$ w(b_{h-1},b_h) = M_h, \quad  w(a,b_h) = N_h, \qquad w'(b'_{h-1},b'_h) = M'_h, \quad  w'(a',b'_h) = N'_h,$$
we define 
$$ \scafs \star \scaft = \sS( K, \{b_0, a, b'_m \}; \hat{w}) $$
where
$$ V(K)= \{ a, b_0, b_1,\ldots, b_\ell ,b'_1,\ldots, b'_m \} $$
$$  E(K) = \{ (b_0,b_1),\ldots, (b_{\ell-1},b_\ell),  \  (b_\ell,b'_1),\ldots, (b'_{m-1},b'_m), \ (a,b_0),\ldots,(a,b_\ell), (a,b'_1),\ldots, (a,b'_m) \}$$
with edge weights $\hat{w}(a,b_\ell )= N_\ell \circ N'_0$ and 
$$ \hat{w}(b_{h-1},b_h) = M_h, \quad  \hat{w}(a,b_h) = N_h, \qquad \hat{w}(b_\ell,b'_1)=M'_1, \ \hat{w}(b'_{h-1},b'_h) = M'_h, \quad  \hat{w}(a,b'_h) = N'_h ~ .$$
The product is extended linearly to $\cT$.  

We state the following isomorphism without proof:

\begin{theorem}
\label{Thm:Tproduct}
The map $\xi : \cT \rightarrow \displaystyle{\bigoplus_{x\in X}} \  \Ter_x $ defined by 
\begin{center}
\input{arxivdiagrams/terw_alg_basis.tex}
\end{center}
and extended linearly is a vector space isomorphism satisfying
$$ \xi( \scafs \star \scaft ) = \xi( \scafs ) \xi(  \scaft )  $$
where the product on the right is ordinary matrix product of block diagonal matrices.
\end{theorem}

This gives a natural interpretation of certain third-order scaffolds as elements of the direct sum of all Terwilliger algebras
$\Ter_x$ as $x$ ranges over the elements of $X$.   For example, since 
$$ E_i^*(x) A_j E_k^*(x) = \sum_{ \substack{ y,z\in X \\ (x,y)\in R_i, (y,z)\in R_j, (z,x)\in R_k}}  \hat{y} \otimes \hat{z}$$
we identify this matrix with $(E_i^*(x) A_j E_k^*(x)) \otimes \hat{x}$ and sum over $x\in X$ to obtain \input{arxivdiagrams/Talg1.tex}
Likewise, this isomorphism associates $\oplus_x A_i^*(x) E_j A_k^*(x)$, 
$\oplus_x A_i E_j^*(x) A_k$, and $\oplus_x  E_i A_j^*(x) E_k$, respectively, 
to the following scaffolds:

\begin{center}
\input{arxivdiagrams/Talg2.tex}
\end{center}

Paul Terwilliger [private communication] conjectures the following:
For a $Q$-polynomial bipartite distance-regular graph, the 
space of third order tensors of the form depicted on the left below is spanned by the subset of scaffolds 
with inner edges  all  having weight $A_t$, outer edges having weights $A_i$, $A_j$ and $A_k$. Further we 
obtain a basis  when we include only the  scaffolds of this sort where  $t+i+j +k \le d$.
\begin{center}
\input{arxivdiagrams/terwillconj.tex}
\end{center}

More importantly, Terwilliger conjectures that this space is both $(\cdot)$-invariant and $\{\cdot \}$-invariant; i.e., 
each of the maps $\cE^{k,\ell}_{A_i}$ and  $\cD^{i}_{E_j}$ map this space into itself. Terwilliger claims that, 
interpreted as in Theorem \ref{Thm:Tproduct}, this space is the full subconstituent algebra.

\begin{theorem}[Cf.~Terwilliger] \label{Thm:Talgcollapse}
Let $\BMA$ be the Bose-Mesner algebra of an association scheme.
\begin{itemize}
\item[(a)] The following are equivalent:
\begin{itemize}
\item[$\bullet$] 
\input{arxivdiagrams/inlineWtristar.tex} $=$ \input{arxivdiagrams/inlineWDelta.tex}
\item[$\bullet$]
\input{arxivdiagrams/inlineWK4.tex} $=$ \input{arxivdiagrams/inlineWDelta.tex}
\item[$\bullet$]  \qquad 
$\cT = $ \input{arxivdiagrams/inlineWDelta.tex}  \ .
\end{itemize}
\item[(b)] The following are equivalent:
\begin{itemize}
\item[$\bullet$] 
\input{arxivdiagrams/inlineWtristar.tex} $=$ \input{arxivdiagrams/inlineWWye.tex}
\item[$\bullet$] 
\input{arxivdiagrams/inlineWK4.tex} $=$ \input{arxivdiagrams/inlineWWye.tex}  
\item[$\bullet$]  \qquad 
$\cT = $ \input{arxivdiagrams/inlineWWye.tex}  \ .
\end{itemize}
\end{itemize}
\end{theorem}

\begin{proof}  By Theorem \ref{Thm:minors}, both \input{arxivdiagrams/inlineWDelta.tex}  and 
 \input{arxivdiagrams/inlineWWye.tex} are contained in both \input{arxivdiagrams/inlineWtristar.tex} and 
 \input{arxivdiagrams/inlineWK4.tex}. Moreover all of these spaces are contained in $\cT$.
 
 Now \ assume \ \input{arxivdiagrams/inlineWtristar.tex} $=$ \input{arxivdiagrams/inlineWDelta.tex}. \ \ We \  will \ 
 prove \ that \ \  $\cT = $\\ \input{arxivdiagrams/inlineWDelta.tex}. The remaining three parts of the proof are
 similar.

Given the scaffold \\
\input{arxivdiagrams/TequalDeltaproof1.tex} 

\bigskip

\noindent we apply standard rules ($\sSR{0}$, Lemma \ref{Lem:trivial}(ii) ) to manipulate $\scafs$ into a useful form:\\
\input{arxivdiagrams/TequalDeltaproof2.tex} 

\bigskip \bigskip

\input{arxivdiagrams/TequalDeltaproof3.tex} 

By hypothesis, there exist matrices $R_1,S_1,T_1,\ldots$ in $\mathbb{A}$ satisfying

\input{arxivdiagrams/TequalDeltaproof0.tex}

\bigskip \bigskip

This substitution gives us \\
\input{arxivdiagrams/TequalDeltaproof4.tex} 

\bigskip \bigskip

\input{arxivdiagrams/TequalDeltaproof5.tex} 

\noindent and this reduction process can be repeated until we reach a linear combination of ``Deltas'', showing that 
$\mathsf{s} \ \in$  \input{arxivdiagrams/inlineWDelta.tex} .  $\Box$
\end{proof}

\subsection{The vector space of scaffolds of order two}
\label{Subsec:space2}

We consider the vector space of scaffolds of order two and various subspaces of this space. In particular, we wish 
to know, in the setting where edge weights belong to a Bose-Mesner algebra $\BMA$, when
such a subspace is no larger than $\BMA$ itself. Assume in this subsection that all edge weights belong to the 
Bose-Mesner algebra $\BMA$ of some  association scheme $(X,\cR)$.

First, since $\BMA$ is a nonzero subspace of $\Mat_X(\cx)$,  the vector space of scaffolds of order zero 
is simply $\cx$.   As we learned in Theorem  \ref{Thm:orbits}, the space of first order scaffolds has dimension 
equal to the number of orbits (on vertices) of the automorphism group of the scheme.

A \emph{circular planar graph} \cite{gitlerlopez} is an ordered pair $(G,R)$ where  $G$ is a graph embedded in the plane with a distinguished set $R \subseteq V(G)$ of nodes all appearing on the outer face. Let us say that a scaffold
$\sS(G,R;w)$ is \emph{planar} if $(G,R)$ is a circular planar graph.  For fixed $m$, the vector subspace of all  
$m^{\rm th}$ order planar scaffolds  with edge weights  in $\BMA$  is worthy of study. An 
$m$-\emph{terminal series-parallel graph} is a graph with a distinguished set of $m$ nodes which can be 
reduced via some sequence of series and parallel edge reductions to a graph on those $m$ nodes only.

\begin{example}
A circular planar graph  and its circular planar dual contain equally many terminal nodes. The following pair of 
examples illustrates the relationship between planar duality and duality in association schemes:
\begin{center}
\input{arxivdiagrams/dualpair.tex}
\end{center}
\end{example}

\begin{theorem}
Let $\mathcal{G}$ denote the set of all ordered pairs $(G,R)$ of two-terminal series parallel graphs with 
root nodes $R=\{r_1,r_2\}$.  For any coherent algebra $\BMA$, we have
$$ \BMA = \bigcup_{(G,R)\in \mathcal{G} } \sbW( (G,R); \BMA). $$
\end{theorem}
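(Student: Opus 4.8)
The plan is to prove the two inclusions separately, with essentially all of the work residing in showing $\bigcup_{(G,R)\in\mathcal{G}}\sbW((G,R);\BMA)\subseteq\BMA$; the reverse inclusion is immediate. For the easy direction I would recall that $K_2$ (a single edge joining its two vertices) is itself a two-terminal series-parallel graph, since the empty reduction sequence already leaves only its two terminal nodes, so $(K_2,R)\in\mathcal{G}$ with $R=V(K_2)$. By the first of the examples following Definition \ref{Def:Wspace} we have $\sbW((K_2,R);\BMA)=\BMA$, whence $\BMA\subseteq\bigcup_{(G,R)\in\mathcal{G}}\sbW((G,R);\BMA)$.

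For the hard direction I would fix $(G,R)\in\mathcal{G}$ with $R=\{r_1,r_2\}$ and an arbitrary weighting $w:E(G)\to\BMA$, then mirror the defining series-parallel reduction sequence of $(G,R)$ at the level of scaffolds by repeatedly invoking Lemma \ref{Lem:seriespar}. Since a two-terminal series-parallel graph is connected and built from a single edge by series and parallel composition, there is a sequence of reductions, each either eliminating a non-terminal node of degree two (a series reduction) or collapsing a parallel pair (a parallel reduction), that terminates in the single edge on $\{r_1,r_2\}$. The points to verify are that each step is legitimate: the internal node removed by a series reduction is hollow and lies outside $R$, so hypothesis (i) of Lemma \ref{Lem:seriespar} applies, while parallel reductions are always available via (ii). Crucially, because $\BMA$ is a coherent algebra it is closed under ordinary product (series reduction), Hadamard product (parallel reduction), and transpose (needed to normalize edge orientations, as reversing an arc $e=(a,b)$ replaces $w(e)$ by its transpose). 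Hence every intermediate edge weight stays in $\BMA$, and the reduction ends with a single-edge scaffold $\sS(K_2,R;M)=M$ for some $M\in\BMA$. Therefore $\sS(G,R;w)=M\in\BMA$; taking the span over all $w$ yields $\sbW((G,R);\BMA)\subseteq\BMA$, and taking the union over $\mathcal{G}$ completes this inclusion. Combining the two inclusions gives the asserted equality $\BMA=\bigcup_{(G,R)\in\mathcal{G}}\sbW((G,R);\BMA)$.

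I expect the only genuine subtlety — rather than a real obstacle — to be the bookkeeping around the reduction: confirming that the reduction sequence guaranteed by the definition of a two-terminal series-parallel graph can always be realized by valid applications of Lemma \ref{Lem:seriespar}, in particular that the two terminals are never eliminated, that each series reduction acts on a genuine degree-two non-root node, and that no loop on a root node intrudes. Since the recursive structure of such graphs produces no loops and preserves both terminals, and since edge orientations are harmlessly absorbed by closure under transpose, these checks are routine and the computation goes through.
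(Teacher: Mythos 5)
Your proof is correct and follows essentially the same route as the paper: both arguments rest on Lemma~\ref{Lem:seriespar} together with the closure of a coherent algebra under ordinary and entrywise multiplication (and transpose), the only cosmetic difference being that you run the series-parallel structure as a reduction of $(G,R)$ down to $K_2$ while the paper builds $(G,R)$ up from $K_2$ by subdivision and doubling of edges. The bookkeeping points you flag (terminals never eliminated, series reductions only at hollow degree-two non-root nodes) are exactly the hypotheses of the lemma and are satisfied for the same reasons.
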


\begin{proof}
To prove forward containment is trivial: each $M \in \BMA$ is expressible as a scaffold whose underlying diagram 
$G$ is the complete graph on two nodes. 
If matrices $M,N \in \Mat_X(\cx)$ correspond to second order planar scaffolds $\scafs$ and $\scaft$,
respectively, then both their matrix product and their entrywise product are expressible as second order 
planar scaffolds as well.
\begin{center}
\input{arxivdiagrams/planar.tex}
\end{center}
Applying these operations repeatedly, we see that any second order scaffold $\scafs$ having all edge weights in $\BMA$ whose underlying diagram can be constructed  from $K_2$ by successive subdivision and doubling of edges belongs to $\BMA$.  $\Box$
\end{proof}

It is well known (cf.~\cite[Exer.~32,p191]{diestel}) that a multigraph is series-parallel if and only if it contains no $K_4$ minor. It is easy to check that $K_{3,3}$ contains a $K_4$ minor. So it follows by Kuratowski's Theorem that 
every series-parallel graph is planar.  In general, the space of second order planar scaffolds  with edge 
weights in $\BMA$ can properly contain $\BMA$.  As an example, we consider the Doob graphs.

The Hamming graph $H(d,4)$ and the Doob graph {\sf Doob}$(s,t)$ with $2s+t=d$ have the same parameters \cite[Sec.~9.2B]{bcn}. The first scaffold below takes on the same value for both, but the second scaffold 
gives different values for different values of $(s,t)$:

\begin{center}
\input{arxivdiagrams/Doob1.tex}
\end{center}

Indeed, the first scaffold computes the number of labelled triangles in a graph $\Gamma$ with adjacency matrix
$A_1$. Since $A_1^2 = 3dA_0+2A_1+2A_2$ for both the Hamming and Doob graph,  this
is the sum of entries of $A_1^2 \circ A_1 = 2A_1$.  But a Doob graph is a Cartesian product of $s$ copies
of $K_4$-free Shrikhande graphs with $t$ copies of $K_4$, and only these latter factors contribute to the 
count of  labelled 4-cliques.

In fact,
\begin{center}
\input{arxivdiagrams/Doob2.tex}
\end{center}
where $B$ is the adjacency matrix of the subgraph $H$ of {\sf Doob}$(s,t)$ whose edges are
those belonging to 4-cliques;  up to isomorphism, this graph $H$ consists of $16^s$ copies of the Hamming
graph $H(t,4)$.

\begin{example}
If $A_1$ is the adjacency matrix of the Shrikhande graph $\Gamma$ on sixteen vertices, one may easily
devise a second order scaffold which is the adjacency matrix of a graph $\Sigma$, isomorphic to $\Gamma$,
which appears as a subgraph of the complement $\Gamma'$:
\begin{center}
\input{arxivdiagrams/Shrik.tex}
\end{center}
This planar second order scaffold does not belong to the Bose-Mesner algebra of $\Gamma$.
\end{example}

In the case of the Hamming graph, every second order planar scaffold corresponds to some matrix in the
Bose-Mesner algebra. In Theorem \ref{Thm:planar2terminal} below, we will prove  that, whenever
the association  scheme is both triply regular and dually triply regular, 
all second order planar scaffolds fall within the Bose-Mesner algebra. 
.

Following Hestenes and Higman \cite{hesthig}, a strongly regular graph $\Gamma$ is said to enjoy
the $t$-\emph{vertex condition} if, for any graph $G$ on at most $t$ nodes and any two distinguished nodes
$a,b\in V(G)$ the number of graph homomorphisms from $G$ to $\Gamma$ mapping $a$ to $x$ and $b$ to
$y$ depends only on whether $x$ and $y$ are equal, adjacent, or non-adjacent. A recent investigation on
this topic is Reichard's paper \cite{reichard}.

Inspired by this, we say an association scheme $(X,\cR)$ with Bose-Mesner algebra $\BMA$
enjoys the $t$-\emph{vertex condition} if every second order scaffold $\sS(G,R;w)$ with $t$ or fewer nodes
and edge weights in $\BMA$ belongs to $\BMA$.  We prove in Section \ref{Sec:triplyreg} below that every
triply regular association scheme satisfies the 4-vertex condition.

%
%
%
\section{Duality of planar scaffolds}
 \label{Sec:planar}
 
 The obvious duality between the scaffold identities
 \begin{center}
 \input{arxivdiagrams/pijkqijk.tex} \\
\end{center}
 extends to dual pairs of theorems in some cases, as we've seen.  We claim that these are instances of a much more general phenomenon.
 
 A \emph{scaffold equation} of order $r$ is an equation of the form
 $$ \sum_{k=1}^m \alpha_k \sS_k = \zero $$
 where each $\alpha_k$ is a scalar, each $\sS_k$ is a scaffold of order $r$, $\zero$ is the zero tensor of order $r$,
 and a bijection $\zeta_{j,k}$ is specified (or understood) between the root nodes of $\sS_k$ and $\sS_j$  for each 
 $j$ and $k$ in a consistent manner; i.e., we assume $\zeta_{i,j} \circ \zeta_{j,k}=\zeta_{i,k}$ for each $i,j,k$ and $\zeta_{k,k}$ is the identity map. We note that, throughout this paper, this correspondence of tensor components
 has been conveniently indicated  pictorially by consistent spatial placement of the root nodes.  Note that, for fixed
 $d$, the $P$-polynomial condition and the $Q$-polynomial condition can both be encoded as finite systems of 
 scaffold equations.

Some circular planar graphs admit multiple, inequivalent, embeddings in a disk. 
We may define an augmented graph $G^+$ by adding an additional node $\infty$ whose neighbors are exactly those nodes $v\in R$; it is immediate that $(G,R)$ is a circular planar graph if and only if $G^+$ is a planar graph. 
Moreover, by a theorem of Tutte, if $G^+$ if 3-edge-connected, then this planar embedding is unique.

Let $\scafs = \sS(G,R;w)$ be a planar symmetric scaffold with a fixed embedding in a closed disk where all root nodes 
appear on the boundary. Assume, for simplicity, that $w(e) \in \{A_0,\ldots,A_d\}$ for each edge $e$.
In order to define the \emph{dual scaffold} $\scafs^\dagger$, we first construct a dual graph
$G^\dagger$ which has one node for each face of this embedding. (This circular planar dual can be obtained from the
planar dual of graph $G^+$ by deleting the dual edges corresponding to edges of $G^+$ incident
to the node $\infty$. So, in contrast to the planar dual of $G$, this
graph has $m=|R|$ nodes on the infinite face, which has been subdivided by the $m$ segments of the boundary 
of the disc.)  Each directed edge $e$ of $G$ is rotated $90^\circ$ counterclockwise  to give an edge 
$e^\dagger$ of $G^\dagger$ joining the two faces it bounds.  The distinguished (``root'') nodes of the dual 
scaffold are those $m$ faces incident to the bounding disk.  The edge weights are then given by $w(e^\dagger)=E_j$
where $w(e)=A_j$.  This map is extended linearly as in the notion of a duality map.

\begin{conjecture}
Suppose we have a collection $\{ \sS_k \}$ of $m^{\rm th}$ order  planar scaffolds where all edge weights belong to 
the set of symbols $A_0,A_1,\ldots, A_\delta$. Assume that, for all  association schemes 
with $d\ge \delta$ classes,  the scaffold equations $\sum_{k=1}^{n_j} \alpha_{jk} \sS_k = \zero$ 
($1\le j\le n$) together imply
the scaffold equation $\sum_{k=1}^{n_0} \beta_k \sS_k = \zero$.  

Then, for any  association scheme with $d\ge \delta$ classes, the dual scaffold equations
$$ \sum_{k=1}^{n_j} \alpha_{jk} \sS_k^\dagger = \zero  \qquad (1\le j\le n)$$
together imply the  dual scaffold equation
 $$\sum_{k=1}^{n_0} \beta_k \sS_k^\dagger = \zero.$$
\end{conjecture}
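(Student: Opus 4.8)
The plan is to reduce the conjecture to the construction of a single scheme-dependent linear isomorphism that realizes the duality map $\dagger$ at the level of tensors, and then to exploit linearity. Fix the cyclic arrangement of the $m$ roots on the bounding circle that is common to all the $\sS_k$; this is exactly the data encoded by the bijections $\zeta_{j,k}$. I would seek a linear isomorphism $T : V^{\otimes m} \to V^{\otimes m}$, \emph{depending only on the scheme and on this boundary arrangement}, such that $T(\sS) = \sS^\dagger$ for every planar scaffold $\sS$ whose embedded diagram has this boundary. Granting such a $T$, the conjecture is immediate: if an association scheme satisfies all the dual hypotheses $\sum_k \alpha_{jk}\sS_k^\dagger = \zero$, then applying $T^{-1}$ termwise gives $\sum_k \alpha_{jk}\sS_k = \zero$, so the scheme satisfies the primal hypotheses; the assumed primal implication forces $\sum_k \beta_k \sS_k = \zero$; and applying $T$ returns the dual conclusion $\sum_k \beta_k \sS_k^\dagger = \zero$. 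Thus all of the analytic content is concentrated in producing $T$ and verifying $T(\sS)=\sS^\dagger$ in every scheme.

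To build $T$, I would pass to the tensor-network picture: a planar scaffold is a contraction in which every internal node is summed over $X$ while the $m$ boundary roots carry free legs placed in cyclic order on the disk. The candidate $T$ is the ``order/disorder'' transform familiar from planar statistical mechanics: along each of the $m$ boundary segments (the gaps between consecutive roots, which become the dual roots) one inserts a resolution $I = \sum_j E_j$ and reroutes the free legs from vertex variables to face variables. Correctness of $T$ would then be proved by induction over a sequence of value-preserving reductions of the interior, using a \emph{local duality lemma}: under the combined operation of planar-dualizing a small subdiagram and replacing each $A_j$ by $E_j$, a series reduction $A_iA_j=\sum_k p_{ij}^k A_k$ is intertwined with a dual parallel reduction $E_i\circ E_j=\tfrac1{|X|}\sum_k q_{ij}^k E_k$, while a parallel reduction $A_i\circ A_j=\delta_{i,j}A_i$ is intertwined with a dual series reduction $E_iE_j=\delta_{i,j}E_i$. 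In short, planar duality exchanges series with parallel, while $A\leftrightarrow E$ exchanges the pair $(p_{ij}^k,\ \delta_{i,j})$ with $(\delta_{i,j},\ q_{ij}^k)$, and the two exchanges compose to the identity on the reduction rules; the eigenmatrix relations $PQ=|X|\,I$ and $m_jP_{ji}=v_iQ_{ij}$ are exactly what is needed to absorb the resulting changes of basis and the powers of $|X|$ introduced by the $\tfrac1{|X|}$ in the Krein rule.

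I expect the essential obstacle to lie in proving that $T$ is genuinely independent of the interior, and in particular that the induction survives non-series-parallel pieces. Series and parallel moves alone do not reduce an arbitrary planar diagram to its boundary; one is forced into Y--$\Delta$ (star--triangle) rearrangements, and these are \emph{not} value-preserving for a general association scheme. The sharpest symptom is already visible in Section~\ref{Subsec:space2}: the planar $K_4$ scaffold separates the Hamming and Doob schemes, which share all intersection numbers and Krein parameters. Hence no purely parameter-level definition of $T$ can succeed, and $T$ must be the genuine, matrix-level transform attached to the scheme. I would therefore first establish the conjecture unconditionally for boundaries whose admissible interiors are two-terminal series--parallel, reusing the reduction calculus of Lemma~\ref{Lem:seriespar} to make the order/disorder transform completely explicit; this already covers a large and natural family. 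For the general planar case I anticipate that one must either impose a compatible star--triangle structure on the scheme, in the spirit of the Type~III (spin-model) relation recalled in Section~\ref{Subsec:spinmodels}, under which the Y--$\Delta$ move becomes value-preserving and the induction closes, or else find a new mechanism certifying the global consistency of the local duality lemma across the $K_4$-type obstruction without appealing to the scheme parameters alone.
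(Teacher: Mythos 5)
The statement you are addressing is stated in the paper as a \emph{conjecture}; the paper offers no proof of it, only an illustrative example (a chain of $P$-polynomial identities and their $Q$-polynomial duals), so there is no in-paper argument to measure you against. Judged on its own terms, your proposal is a strategy sketch rather than a proof, and you say as much; the difficulty is that the step you present as a ``reduction'' does not actually reduce anything. For your scheme-dependent map $T$ with $T(\sS)=\sS^\dagger$ to be a well-defined linear map on the span of all planar scaffolds with the given boundary, you need precisely that every linear relation $\sum_k c_k\sS_k=\zero$ holding in a given scheme dualizes to $\sum_k c_k\sS_k^\dagger=\zero$ in that scheme; and for the $T^{-1}$ step you need the converse (injectivity on that span, i.e.\ that relations among the duals lift back). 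Each of these is a per-scheme statement at least as strong as the conjecture itself (the conjecture only asserts that implications valid \emph{uniformly over all schemes with $d\ge\delta$ classes} dualize), so positing $T$ begs the question rather than isolating a tractable lemma. All of the content then falls on your construction of $T$ by local moves, and there the induction genuinely breaks: series and parallel reductions (Lemma \ref{Lem:seriespar}) do not suffice to collapse a general planar interior, the Y--$\Delta$ move is not value-preserving for a general scheme, and --- as you correctly observe from Section \ref{Subsec:space2} --- the planar $K_4$ scaffold already separates the Hamming and Doob schemes, so no parameter-level repair is possible. Your proposed fallbacks (restricting to series--parallel interiors, or assuming a spin-model/triple-regularity type hypothesis under which Y--$\Delta$ becomes value-preserving, cf.\ Theorem \ref{Thm:planar2terminal}) are reasonable partial results, but they prove special cases or conditional versions, not the conjecture as stated.

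One further point worth flagging: even the unconditional $n=0$ instance of the conjecture --- that a planar scaffold identity valid in \emph{all} schemes with $d\ge\delta$ classes dualizes to a universally valid identity --- is not delivered by your argument, because your $T$ is built scheme by scheme and its existence for a single scheme is exactly the per-scheme dualization property you cannot yet establish. A workable attack would more likely exploit the universal quantifier in the hypothesis (e.g.\ by dualizing the \emph{derivation} of the target identity from the hypotheses, move by move, using the manifest $\sSR{k}\leftrightarrow\sSR{k'}$ pairing in Appendix A) rather than trying to realize $\dagger$ as a linear operator on tensors. Your observation that the eigenmatrix relations $PQ=|X|I$ and $m_jP_{ji}=v_iQ_{ij}$ are the bookkeeping needed to match the $|X|$-factors between $p_{ij}^k$-type and $q_{ij}^k$-type reductions is sound and would be an ingredient of any such move-by-move argument.
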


This conjecture allows us to map identities to identities. 
As an example, we now give three obviously equal scaffolds for $P$-polynomial schemes and the  
dual scaffolds which are equal for all $Q$-polynomial schemes. (Equality is easily shown using the Isthmus Lemma.)

\input{arxivdiagrams/pentagon0.tex} \hspace{0.5in}
\input{arxivdiagrams/pentagon1.tex} \hspace{0.5in}
\input{arxivdiagrams/pentagon2.tex} 

\vspace{1in}

\input{arxivdiagrams/fivestar0.tex} \hspace{0.5in}
\input{arxivdiagrams/fivestar1.tex} \hspace{0.5in}
\input{arxivdiagrams/fivestar2.tex} 

%
%
%
\section{Triply regular association schemes}
\label{Sec:triplyreg}

We call a $d$-class symmetric association scheme \emph{triply regular} if, for all $0\le i,j,k,r,s,t \le d$,
the following identity of tensors holds for some scalar $\tau_{i,j,k}^{r,s,t}$: \\
\begin{center}
\input{arxivdiagrams/triplyregular.tex}
\end{center}

In other words, for all $x,y,z \in X$ and all indices $r,s,t$ the number of vertices $w$ which are $r$-related to $x$, $s$-related to $y$ and $t$-related to $z$ just depends on $r,s,t$ and those indices $i,j,k$ for which
 $(x,y)\in R_i$, $(y,z)\in R_j$ and $(z,x)\in R_k$ and not on the choice of vertices $x,y,z$ themselves.

\bigskip

Dually, let's call a symmetric association scheme \emph{dually triply regular} if the following identity of tensors holds for some scalar $\sigma_{i,j,k}^{r,s,t}$: 

\begin{center}
\input{arxivdiagrams/dualtriplyregular.tex}
\end{center}

In other words, for all $x,y,z\in X$ and all indices $i,j,k,r,s,t$
$$ \sum_{u,v,w\in X} (E_i)_{x,u} (E_j)_{y,v} (E_k)_{z,w}  \  (E_r)_{v,w} (E_s)_{w,u} (E_t)_{u,v} $$
is a scalar multiple of 
$$ \sum_{w' \in X} (E_i)_{x,w'} (E_j)_{y,w'} (E_k)_{z,w'} $$
independent of the choice of $x,y,z$.

\begin{theorem}
\label{Thm:tripreg1}
Let $(X,\cR)$ be a symmetric association scheme with Bose-Mesner algebra $\BMA$.  Then
\begin{itemize}
\item[(i)]  $(X,\cR)$ is  triply regular   if and only if 
\input{arxivdiagrams/inlineWDelta.tex} \!\!\!\! $=$ \input{arxivdiagrams/inlineWK4.tex} \!\!\! ;
\item[(ii)] $(X,\cR)$ is  dually triply regular  if and only if 
\input{arxivdiagrams/inlineWWye.tex} \!\!\!\! $=$ \input{arxivdiagrams/inlineWtristar.tex} \!\!\! .
\end{itemize}
\end{theorem}

\begin{proof}
Clearly \input{arxivdiagrams/inlineWDelta.tex} is a subspace of \input{arxivdiagrams/inlineWK4.tex}. If the triply regular
condition holds, then we obviously have containment in the other direction as well. Conversely, observe that
\begin{center}
\input{arxivdiagrams/IPK4Delta.tex}
\end{center}
and that the ``Delta'' scaffolds with edge weights in $\{A_0,\ldots,A_d\}$ are pairwise orthogonal by Theorem
\ref{Thm:DeltaWyebases}. This implies that, if \input{arxivdiagrams/tripregK4.tex} belongs to 
\input{arxivdiagrams/inlineWDelta.tex}, then it must be  a scalar multiple of  \input{arxivdiagrams/tripregK3.tex}.  
The second claim is proved in a similar manner.   $\Box$
\end{proof}

We now have another way to interpret Theorem \ref{Thm:Talgcollapse}: the Terwilliger algebra $\cT$ of $(X,\cR)$
satisfies $\cT = $ \input{arxivdiagrams/inlineWDelta.tex} if and only if  $(X,\cR)$ is triply regular and satisfies
 $\cT = $ \input{arxivdiagrams/inlineWWye.tex} if and only if  $(X,\cR)$ is dually triply regular. Moreover, we have

\begin{theorem}
\label{Thm:tripreg2}
Let $(X,\cR)$ be a symmetric association scheme with Bose-Mesner algebra $\BMA$.  Then
\begin{itemize}
\item[(i)]  $(X,\cR)$ is  triply regular   if and only if 
\input{arxivdiagrams/inlineWDelta.tex} \!\!\!\! $=$ \input{arxivdiagrams/inlineWtristar.tex} \!\!\! ;
\item[(ii)] $(X,\cR)$ is  dually triply regular  if and only if 
\input{arxivdiagrams/inlineWWye.tex} \!\!\!\! $=$ \input{arxivdiagrams/inlineWK4.tex} \!\!\! .
\end{itemize}
\end{theorem}

\begin{proof}
In both cases, forward containment is given by Corollary \ref{Cor:tristarcontainment}. Next assume 
that $\BMA$ is the Bose-Mesner algebra
is triply regular and consider the following series of expansions using $\sSR{0'}$ and $J=\sum_\ell A_\ell$: 

\begin{center}
\input{arxivdiagrams/TriplyRegularTriStar.tex}
\end{center}
To prove reverse containment for part {\sl (ii)}, we work with the duals of these diagrams. Applying Rule $\sSR{0}$
and the expansion $I = \sum_\ell E_\ell$, we compute
\begin{center}
\input{arxivdiagrams/DualTriplyRegularK4.tex}
\end{center}
\end{proof}

\begin{proposition}
\label{Prop:triply_regular_4vertex}
Every triply regular symmetric association scheme satisfies the 4-vertex condition.
\end{proposition}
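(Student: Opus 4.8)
The plan is to prove the $4$-vertex condition directly: an arbitrary second-order scaffold $\sS(G,R;w)$ with $|R|=2$, at most four nodes, and edge weights in $\BMA$ will be reduced, via the elementary moves, to a short list of irreducible diagrams, and triple regularity will be used to handle the only hard one. First I would normalize $(G,R)$. Every matrix $M=\sum_i c_iA_i\in\BMA$ has constant diagonal (since $(A_i)_{aa}=\delta_{i0}$) and, the scheme being symmetric, constant row and column sum $\sum_i c_iv_i$ (using $A_i\one=v_i\one$). Hence any loop contributes only a scalar and may be deleted, and any hollow node of degree one may be removed by Lemma~\ref{Lem:degreeonehollow}; a hollow node of degree zero contributes the scalar $|X|$ and disappears. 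After collapsing parallel edges into a single edge by the Schur product (Lemma~\ref{Lem:seriespar}(ii)), a hollow node of degree two has two distinct neighbours and collapses by the series rule (Lemma~\ref{Lem:seriespar}(i)). Since $\BMA$ is closed under both products, all these moves keep every weight in $\BMA$ while strictly decreasing the node count.

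Running these reductions, a two-node scaffold is a single $\BMA$-edge between the roots, hence in $\BMA$; a three-node scaffold has a single hollow node of degree at most two, which the moves above eliminate, again landing in $\BMA$. So the only genuinely new situation is a four-node diagram in which, after collapsing multi-edges and loops, both hollow nodes $h_1,h_2$ have degree three in the underlying simple graph. On four nodes this forces each of $h_1,h_2$ to be adjacent to all three other nodes, so the diagram is $K_4$ on $\{r_1,r_2,h_1,h_2\}$, or $K_4$ with the edge $r_1r_2$ deleted. Because $J\in\BMA$ and a $J$-weighted edge may be removed (Lemma~\ref{Lem:trivial}(i)), the second case is the first with $w(r_1r_2)=J$; thus it suffices to prove $\sbW(K_4,\{r_1,r_2\};\BMA)\subseteq\BMA$.

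This last containment is exactly where triple regularity enters. The defining identity equates the ``starred triangle'' built on a solid triangle $\{x,y,z\}$ together with a hollow centre $w$ joined to all three corners (triangle weights $A_i,A_j,A_k$ and spoke weights $A_r,A_s,A_t$) with $\tau_{i,j,k}^{r,s,t}$ times the plain triangle on $\{x,y,z\}$; this is an identity of third-order tensors on the root set $\{x,y,z\}$. Applying the order-reduction Lemma~\ref{Lem:orderred} with $R'=\{x,y\}$ hollows the node $z$ on both sides and yields an equality of second-order tensors. On the left this is precisely a $K_4$ on $\{x,y,z,w\}$ with solid roots $x,y$ and hollow $z,w$; on the right it is $\tau_{i,j,k}^{r,s,t}$ times the triangle on $\{x,y,z\}$ with $z$ hollow, a three-node second-order scaffold lying in $\BMA$ by the reductions of the previous paragraph. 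Hence each such $K_4$ scaffold lies in $\BMA$. As the six indices $i,j,k,r,s,t$ range independently over $\{0,\dots,d\}$, the six edges of $K_4$ independently realize every standard basis matrix, so by multilinearity of $w\mapsto\sS(K_4,\{r_1,r_2\};w)$ the whole space $\sbW(K_4,\{r_1,r_2\};\BMA)$ is contained in $\BMA$. Together with the normalization this shows every second-order scaffold on at most four nodes lies in $\BMA$, i.e.\ the $4$-vertex condition holds.

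The main obstacle I anticipate is the bookkeeping in the normalization step: one must verify that no four-node configuration escapes the reductions except the two $K_4$-type graphs, and in particular that diagrams with multi-edges, or with a hollow node of nominal degree three but only two distinct neighbours, genuinely collapse to fewer nodes before the irreducible case is reached. The conceptual content---that triple regularity supplies exactly the collapse of the single irreducible four-node diagram---follows cleanly once Lemma~\ref{Lem:orderred} is applied to the defining identity, so everything else is elementary diagram surgery.
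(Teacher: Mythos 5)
Your proof is correct and follows essentially the same route as the paper's: in both, the key step is to apply the triply regular identity to the $K_4$ diagram and then hollow a third node (order reduction, Lemma~\ref{Lem:orderred}), reducing to a three-node second-order scaffold of the form $(A_iA_j)\circ A_k$, which lies in $\BMA$. Your upfront normalization by elementary moves is simply a more explicit rendering of the paper's closing remark that the remaining configurations on at most four nodes are handled by substituting $I$ and $J$ for various edge weights.
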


\begin{proof}
Let $(X,\cR)$ be a triply regular association scheme with Bose-Mesner algebra $\BMA$.  
We must prove that, for every choice of edge weights $M_1,\ldots,M_6 \in \BMA$, the scaffold
\begin{center}
\input{arxivdiagrams/tripreg4proof1.tex}
\end{center}
belongs to $\BMA$. By linearity, we may assume each $M_i \in \{A_0,\ldots,A_d\}$ and employ
the triply regular property to write 
\begin{center}
\input{arxivdiagrams/tripreg4proof2.tex}
\end{center}
Now the fundamental scaffold at right is simply the sum of elementary tensors $\hat{x} \otimes \hat{y} \otimes \hat{z}$
over all ordered triples $(x,y,z) \in X^3$ with $(x,y) \in R_k$, $(x,z)\in R_i$, $(z,y)\in R_j$. Summing over $z\in X$,
we find
\begin{center}
\input{arxivdiagrams/tripreg4proof3.tex}
\end{center}
so that the tensor at left is a scalar multiple of $A_k$.
Replacing various $M_i$ in $\scafs$ by matrices $I$ and $J$, as needed, one obtains the result for any second order scaffold on at most four
nodes. $\Box$
\end{proof}

As Jaeger \cite[Prop.~5]{jaeger} points out,  Epifanov's Theorem  (see \cite{truemper})
establishes that every connected undirected plane graph can be reduced to the trivial graph with one node and no edge via some finite sequence of $\Delta$--Y and
Y--$\Delta$ transformations, together with extended series-parallel reductions. 

\begin{theorem}
\label{Thm:planar2terminal}
Let $(X,\cR)$ be an association scheme with Bose-Mesner algebra $\BMA$. If $(X,\cR)$ is both triply
regular and dually triply regular, then the vector space  spanned by all second order planar scaffolds with 
edge weights in $\BMA$ is equal to $\BMA$.
\end{theorem}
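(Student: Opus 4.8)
The plan is to prove the nontrivial inclusion --- that every second order planar scaffold lies in $\BMA$ --- by a Wye--Delta reduction driven by Epifanov's Theorem, using the two regularity hypotheses only to certify that the topological moves preserve scaffold \emph{spans}. The reverse inclusion $\BMA \subseteq \sbW$ is immediate, since each $M\in\BMA$ is the single-edge scaffold $\sS(K_2,R;w)$ with $w(e)=M$, and $K_2$ is trivially circular planar.

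First I would extract the one algebraic fact that powers everything. Write $\Delta$ for the space of $A$-weighted triangles on three roots, $\mathsf{Y}$ for the space of $\BMA$-weighted three-spoke stars on three roots with a single interior hollow center, and let $U_1,U_2$ be the two six-edge ``triangle-with-center'' spaces of Corollary~\ref{Cor:tristarcontainment}. Theorems~\ref{Thm:tripreg1} and~\ref{Thm:tripreg2} say that triple regularity collapses both $U_1$ and $U_2$ onto $\Delta$, while dual triple regularity collapses both onto the $E$-star space. Under both hypotheses all four spaces coincide; in particular the $E$-star space equals $\Delta$, and since the $E$-star space sits inside $\mathsf{Y}$ (as $E_j\in\BMA$) and $\mathsf{Y}$ sits inside $U_2$ (take the outer triangle edges equal to $J$, whose entries are $1$), we obtain $\Delta = \mathsf{Y}$. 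This single identity is the engine of the argument.

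Next I would set up the reduction. Given a second order planar scaffold $\sS(G,R;w)$ with $R=\{r_1,r_2\}$ and $(G,R)$ circular planar, multilinearity lets me assume every edge weight is a basis matrix $A_i$. By Epifanov's Theorem in its two-terminal form (as invoked by Jaeger~\cite{jaeger}, see~\cite{truemper}), $(G,R)$ reduces --- keeping $r_1,r_2$ on the outer face --- to a single edge through a finite sequence of series reductions, parallel reductions, extended series--parallel reductions (deletion of interior degree-one nodes and of loops), and $\Delta$--Y / Y--$\Delta$ transformations. I would then induct on $|V(G)|+|E(G)|$ to prove $\sbW((G,R);\BMA)=\BMA$, with base case the single edge. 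For the inductive step it suffices that each reduction $(G,R)\rightsquigarrow(G',R)$ satisfies $\sbW((G,R);\BMA)\subseteq\sbW((G',R);\BMA)$: series and parallel reductions give equality by Lemma~\ref{Lem:seriespar}, while interior degree-one nodes and loops are absorbed by Lemma~\ref{Lem:degreeonehollow} together with the constant-row-sum and constant-diagonal properties of $\BMA$.

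The crux is the two topological moves. For a $\Delta$--Y (respectively Y--$\Delta$) step I would decompose $G=H\cup T$, where the local gadget $T$ is the triangle (respectively the star) attached to the rest $H$ at exactly three nodes $a,b,c$. By Proposition~\ref{Prop:glue} and multilinearity of scaffolds in their edge weights, $\sbW((G,R);\BMA)$ depends on $T$ only through the subspace of $V^{\otimes 3}$ swept out by $\sS(T,(a,b,c);w_T)$ as $w_T$ ranges over $\BMA^{E(T)}$; crucially, the linear gluing map that reassembles $H$ at $a,b,c$ does not see whether the local three-tensor came from a triangle or a star. Hence the move only replaces the local space $\Delta$ by $\mathsf{Y}$ or vice versa, and since $\Delta=\mathsf{Y}$ the span is unchanged, giving the required inclusion in both directions. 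Running the induction to completion yields $\sbW((G,R);\BMA)=\BMA$. I expect the main obstacle to lie precisely in this local substitution: one must verify rigorously that the gluing map is independent of the interior of $T$ (so that only the spanned local three-tensor space matters) and that spanning over all edge-weight choices in $H$ preserves the equality; a secondary point needing care is the two-terminal form of Epifanov's reduction, ensuring $r_1,r_2$ are never consumed and that each gadget to which a $\Delta$--Y/Y--$\Delta$ move applies is attached to the remainder of the graph at exactly three nodes.
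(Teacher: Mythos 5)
Your proof is correct and follows essentially the same route as the paper's: Epifanov's two-terminal reduction, with series and parallel moves handled by Lemma \ref{Lem:seriespar} and the $\Delta$--Y and Y--$\Delta$ moves justified by the two regularity hypotheses. The only difference is one of packaging: where the paper performs the local moves by inserting $J$- and $I$-weighted edges and invoking the defining $\tau$- and $\sigma$-identities directly, you first extract the single span equality between the triangle space and the star space from Theorems \ref{Thm:tripreg1} and \ref{Thm:tripreg2} and then transfer it through the diagram via Proposition \ref{Prop:glue}, which is a clean and valid reorganization of the same argument.
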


\begin{proof}
Epifanov's Theorem tells us that any two-terminal planar graph is reducible,  via a sequence of 
series-parallel reductions, $\Delta$--Y and Y--$\Delta$ transformations, to a single edge joining those two
terminals. The algebra $\BMA$ is identified with scaffolds defined on 
these single-edge two terminal diagrams with edge weights in
$\BMA$. By Lemma \ref{Lem:seriespar}, neither a series nor a parallel reduction changes the space of edge
weights provided this space is closed under both ordinary and entrywise multiplication. Lemma \ref{Lem:trivial}
allows us to first replace any Y configuration by a $K_4$ configuration using edge weight $J$ on any edges introduced.
Appropriate linear expansions allow us to express any such scaffold as a linear combination of scaffolds whose local 
configuration is a $K_4$ with all edge weights in $\{A_0,\ldots,A_d\}$. The triply regular property then allows us to replace this $K_4$ subdiagram with a $\Delta$ configuration.

The reverse transformation is a bit more delicate. Given a diagram with a $\Delta$ configuration, we first
employ node splitting, as necessary, using Lemma \ref{Lem:trivial} to obtain a diagram where each node 
of the triangle has degree three in the overall diagram. We then express this scaffold as a linear combination
of scaffolds where the six edge weights all lie in $\{ E_0,\ldots, E_d\}$, then applying the dually triply regular 
property to replace this $\Delta$ with a Y configuration. Up to three series reductions are then needed to
exactly mimic the $\Delta$--Y transformation of graph theory. 
By Epifanov's Theorem, after a finite number of steps, each 
scaffold in this linear combination is reduced to a linear combination of second order scaffolds each with a single
edge. As a result, we have expressed our original scaffold as a linear combination of elements of the Bose-Mesner 
algebra. $\Box$
\end{proof}

\medskip

\noindent {\bf Note:} Note that this does not imply that all triply regular association
schemes are Schurian. Theorem \ref{Thm:orbits} requires the entire space of second order scaffolds
to have dimension $d+1$ and this theorem only considers the space spanned by planar scaffolds.

\section*{Acknowledgments}

Various elements of this paper have been presented in talks and were included in earlier drafts which benefited
from informed critiques. 
The author thanks Rosemary Bailey, Sylvia Hobart, Gavin King, Jack Koolen, Xiaoye Liang, Bojan Mohar, 
Eric Moorhouse, Akihiro Munemasa, 
Dan Perreault,  Georgina Quinn, Hajime Tanaka, Paul Terwilliger, Andrew Uzzell, Jason Williford --- and surely others --- for helpful comments. Paul, in particular, provided valuable advice on key parts of the paper. I am grateful to Pi 
Fisher for his help with TIKZ diagrams.
This work was supported, in part, through a grant from the National Science Foundation (DMS Award \#1808376) which is gratefully acknowledged.

%
%
\section*{Appendix}

\appendix

\section{Rules for scaffold manipulation}
\label{App:rules}

In this appendix, we summarize the rules for manipulation of scaffolds. In the case of symmetric association
schemes, all references to directed edges may be replaced by equivalent language referring to edges. The
rules here are given informally with reference to their precise statement in the body of the paper.

\begin{itemize}
\item[$\sSR{0}$] (split node rule) Lemma \ref{Lem:trivial}(ii): We may split a node, solid or hollow, introducing a 
new hollow vertex and choosing $I$ as the new edge weight.

\input{arxivdiagrams/splitnode.tex}
\item[$\sSR{0'}$] (superfluous edge rule) Lemma \ref{Lem:trivial}(i): Between any two nodes of our diagram, we may
insert a new edge $e$ with $w(e)=J$. Conversely, edges with weight $J$ may be deleted.

\input{arxivdiagrams/insertJ1.tex}  \hspace{.4in}
or  \hspace{.4in}
\input{arxivdiagrams/insertJ2.tex}  \hspace{.2in} . 
\item[$\sSR{1}$] (series reduction) Lemma \ref{Lem:seriespar}(i): We may suppress a hollow node of degree two
by taking the matrix product of the two edge weights.

\input{arxivdiagrams/Ruleseries.tex}

\item[$\sSR{1'}$] (parallel reduction) Lemma \ref{Lem:seriespar}(ii):  We may replace two parallel edges by a single
edge by taking the entrywise product of the two edge weights.

\input{arxivdiagrams/Ruleparallel.tex}
\end{itemize}

\bigskip
 
\noindent {\bf Note:} Scaffold manipulation rules  $\sSR{2}$ through $\sSR{4'}$ apply within the scope of Bose-Mesner algebras; edge weights follow standard notational conventions for association schemes.

\begin{itemize}
\item[$\sSR{2}$] (vanishing intersection number) Lemma \ref{Lem:pijkqijk}: Any scaffold containing a directed triangle $a,b,c$ with 
$w(a,b)=A_i$, $w(b,c)=A_j$, $w(a,c)=A_k$ where $p_{ij}^k =0 $ is the zero tensor.

\input{arxivdiagrams/vanishpijk.tex}

\item[$\sSR{2'}$] (vanishing Krein parameter)  Lemma \ref{Lem:pijkqijk}: Any scaffold containing a hollow node $x$ of degree three
with neighbors $a,b,c$ such that 
$w(a,x)=E_i$, $w(b,x)=E_j$, $w(x,c)=E_k$ where $q_{ij}^k =0 $ is the zero tensor.

\input{arxivdiagrams/vanishqijk.tex}

\item[$\sSR{3}$] (pinched star)   Equation (\ref{Eq:drumstick}):

\input{arxivdiagrams/drumstickidentity.tex} 

\medskip

and this is the zero tensor if $q_{ij}^k=0$.

\item[$\sSR{3'}$] (hollow triangle) Equation (\ref{Eq:hollowtriangle}):  

\input{arxivdiagrams/Rulewing.tex}

\medskip

and this is the zero tensor if $p_{ij}^k=0$.
\item[$\sSR{4}$] (Isthmus) Lemma \ref{Lem:Isthmus}: If $q_{jk}^e \cdot q_{\ell m}^e =0$ for all 
$e\neq h$, then

\input{arxivdiagrams/SuzukiIsthmus1.tex}

\noindent and

\input{arxivdiagrams/SuzukiIsthmus2.tex} 

\item[$\sSR{4'}$] (Dual isthmus)  Lemma \ref{Lem:DualIsthmus}: If $p_{hi}^e \cdot p_{jk}^e =0$ for 
all $e\neq \ell$, then

\input{arxivdiagrams/DualIsthmus1.tex}

\noindent and
 
\input{arxivdiagrams/DualIsthmus2.tex}

\item[$\sSR{5}$] (substitution) Proposition \ref{Prop:glue}: If  $\scaft_1$ and $\scaft_2$ are scaffolds 
on the same set $R$ of roots such  that $\scaft_1 = \scaft_2$ and $R' =\{r_1,\ldots,r_\ell\} \subseteq R$, 
then for any scaffold $\scafs$ and any  root nodes $u_1,\ldots,u_\ell$ in the rooted diagram of $\scafs$, 
we have $\scafs +_\xi \scaft_1 = \scafs +_\xi \scaft_2$ where $\xi(u_i) = r_i$.

\input{arxivdiagrams/RuleSubstitution.tex}

\item[$\sSR{6}$] (multilinearity): If scaffolds $\scafs$ and $\scafs_1,\ldots,\scafs_n$ are identical except in
their weight on one edge $e$ where $w(e)=M$ in $\scafs$ and $w(e)=N_\ell$ in $\scafs_\ell$ ($1\le \ell \le n$)
where $M = \sum_{\ell=1}^n \alpha_\ell N_\ell$, then 
$\scafs = \sum_{\ell=1}^n \alpha_\ell \scafs_\ell $.

\input{arxivdiagrams/linearity.tex}

\item[$\sSR{7}$] (Transpose property): Reversing the direction of an edge in diagram $G$ is equivalent to
replacing the weight of that edge by its transpose.

\input{arxivdiagrams/RuleTranspose.tex} 

\item[$\sSR{8}$] (Commutative property):  If $a \in V(G)$ is a hollow node incident to just two edges $e$ and
$e'$ where $w(e) w(e') = w(e')w(e)$, then swapping the weights on these edges leaves the scaffold unchanged.

\input{arxivdiagrams/Rulecommute.tex}

\item[$\sSR{9}$] (degree one vertices) Lemma \ref{Lem:degreeonehollow}: Assuming constant row sum
or column sum (as appropriate) on the edge weight, a hollow node of degree one may be deleted.

\input{arxivdiagrams/degreeone.tex}

\item[$\sSR{10}$] (order reduction) Lemma \ref{Lem:orderred}: Equality is preserving in passing from root
nodes $R$ to a proper subset $R'$.

\input{arxivdiagrams/RuleOrderReduction.tex}

\item[$\sSR{11}$] (bilinear maps) If $\sS$, $\scafs$ and $\scaft$ are scaffolds where $\scafs=\scaft$ and the 
products are appropriately defined, then, since the operation in Equation (\ref{Eqn:bilin1}) is well-defined, 
$$ \langle \langle \sS , \scafs \rangle \rangle \ = \  \langle \langle \sS , \scaft \rangle \rangle ~ .$$
\end{itemize}

\section{Inner products of common third order scaffolds}
\label{App:ip}

Here we use (\ref{Eqn:ip3}) to compute some inner products of third order scaffolds 
attached to (commutative) association schemes  without proof.  Note that $\bar{E}_j = E^\top_j$ and that, 
in the symmetric case, edge orientations may be ignored. Only in special cases 
is the result expressible in terms of association scheme parameters.

\bigskip

\input{arxivdiagrams/IPtriAA.tex}

\vspace{-0.3in}

\input{arxivdiagrams/IPtriEA.tex}

\vspace{-0.3in}

\input{arxivdiagrams/IPtriEE.tex}

\input{arxivdiagrams/IPstarEE.tex}

\input{arxivdiagrams/IPstarEA.tex}

\input{arxivdiagrams/IPstarAA.tex}

\input{arxivdiagrams/IPstartriAA.tex}

\input{arxivdiagrams/IPstartriEA.tex}

\input{arxivdiagrams/IPstartriEE.tex}

\input{arxivdiagrams/IPtristarEstarE.tex}


\input{arxivdiagrams/IPtristarEstarA.tex}


\input{arxivdiagrams/IPtristarAA.tex}


\input{arxivdiagrams/IPtristarEA.tex}


\input{arxivdiagrams/IPtristarEE.tex}


\begin{thebibliography}{XX}
\bibitem{banito} 
E.~Bannai and T.~Ito. 
\underline{Algebraic Combinatorics I: Association Schemes}. 
Ben\-ja\-min-Cummings, Menlo Park, 1984.

\bibitem{bcn}
A.~E.~Brouwer, A.~M.~Cohen and A.~Neumaier.
\underline{Distance-Regular Graphs}. Springer-Verlag, Berlin, 1989.

\bibitem{cgs} 
P.~J.~Cameron, J.-M.~Goethals, and J.~J.~Seidel.
The Krein condition, spherical designs, Norton algebras and permutation
groups. \textit{Proc.~Kon.~Nederl.~Akad.~Wetensch. (Indag. Math.)} {\bf 40}
no. 2 (1978), 196--206.

\bibitem{srgsrg}
P.~J.~Cameron, J.-M.~Goethals, and J.~J.~Seidel.
Strongly regular graphs with strongly regular subconstituents.
\textit{J.~Algebra} {\bf 55} (1978), 257--280.

\bibitem{cooljur}
K.~Coolsaet and A.~Juri\v{s}i\'{c}. 
Using equality in the Krein conditions to prove nonexistence of certain distance-regular graphs.
\textit{J.~Combin.~Theory Ser.~A}, \textbf{115} no.~6 (2008),  1086--1095.

\bibitem{circularplanar}
E.~B.~Curtis, D.~Ingerman, and J.~A.~Morrow.
Circular planar graphs and resistor networks.
\textit{Linear Algebra Appl.} \textbf{283} (1998), 115--150.

\bibitem{DRGsurvey}
E.~R.~van Dam, J.~H.~Koolen, and H.~Tanaka.
Distance-Regular Graphs.
\textit{Electronic J.~Combin.}  Dynamic Survey  DS22, 2016

\bibitem{del} 
P.~Delsarte. 
An algebraic approach to the association schemes of coding theory.  
{\it Philips Res.\ Reports Suppl.} {\bf 10} (1973).

\bibitem{dickie}
G.~A.~Dickie.
``$Q$-polynomial  structures for association schemes and distance-regular graphs'', 
Ph.D.~thesis, University of Wisconsin, 1995.

\bibitem{diestel}
R.~Diestel,
\underline{Graph Theory}.
Springer-Verlag, Heidelberg, 1997 (Electronic Edition, 2005)

\bibitem{gavrsudavida}
A.~Gavrilyuk, S.~Suda and J.~Vidali.
On tight 4-designs in Hamming association schemes.
\textit{Preprint}, July 2019. \url{http://front.math.ucdavis.edu/1809.07553}

\bibitem{gitlerlopez}
I.~Gitler and I.~L\'{o}pez.
On topological spin models and generalized $\Delta-Y$ transformations.
\textit{Adv.~Appl.~Math.} {\bf 32} (2004), 263--292.

\bibitem{godsil}
C.~D.~Godsil.
\underline{Algebraic Combinatorics}.
Chapman and Hall, New York, 1993.

\bibitem{hesthig}
M.~D.~Hestenes and D.~G.~Higman.
Rank 3 groups and strongly regular graphs. pp.~141--160 in:
\underline{Proc.~Symp.~Applied Math.} Amer.~Math.~Soc., Providence RI, 1971.

\bibitem{jaeger}
F.~Jaeger.
On spin models, triply regular association schemes, and duality. 
{\it J.~Algebraic Combin.} {\bf 4} (1995), 103--144.

\bibitem{lovasz}
L.~Lov\'{a}sz.
\underline{Large Networks and Graph Limits}.
AMS Colloquium Publications \#60, 
Amer.~Math.~Soc., 2012.

\bibitem{symsub}
W.~J.~Martin.
Symmetric designs, sets with two  intersection numbers and Krein parameters of incidence graphs.
\textit{J.~Combin.~Math.~Combin.~Comput.} {\bf 38} (2001), 185--196.

\bibitem{mtsurvey}
W.~J.~Martin and H.~Tanaka.
Commutative association schemes.
\textit{Europ. J. Combin.} \textbf{30} no. 6 (2009), 1497--1525.

\bibitem{mw}
W.~J.~Martin and J.~S.~Williford.
There are finitely many $Q$-polynomial association schemes with given first multiplicity at least three. 
\textit{European J.~Combin.} {\bf 30} (2009), 698--704. 

\bibitem{MesnerBhatt}
D.~M.~Mesner and P.~Bhattacharya.
Association schemes on triples and a ternary algebra.
\textit{J.~Combin.~Theory Ser.~A}, \textbf{55} (1990),  204--234.

\bibitem{neupen1}
S.~Penjic and A.~Neumaier.
A unified view of inequalities for distance-regular graphs. Part I. 
Manuscript (2018). (\url{https://www.mat.univie.ac.at/~neum/ms/uniDRG1.pdf})

\bibitem{neupen2}
S.~Penjic and A.~Neumaier.
A unified view of inequalities for distance-regular graphs. Part II. 
Manuscript (2018). (\url{https://www.mat.univie.ac.at/~neum/ms/uniDRG2.pdf})

\bibitem{pech}
Ch.\ Pech.
On highly regular strongly regular graphs.
\textit{Preprint} (2016), arXiv:1404.7716.

\bibitem{reichard}
S.~Reichard.
Strongly regular graphs with the $7$-vertex condition.
\textit{J. Algebraic Combin.}
{\bf 41} (2015), 817--842.
  
\bibitem{suzimprim}
H. Suzuki,
Imprimitive $Q$-polynomial association schemes.
\textit{J. Algebraic Combin.}
{\bf 7} (1998), 165--180.

\bibitem{suztwoq}
H. Suzuki,
Association schemes with multiple $Q$-polynomial structures.
\textit{J. Algebraic Combin.}
{\bf 7} (1998), 181--196.

\bibitem{terPQ}
P.~Terwilliger.
A characterization of $P$-~and $Q$-polynomial association schemes.
{\it J.~Combin.~Theory, Ser.~A} {\bf 45} (1987), 8--26.

\bibitem{ter1}
P.~Terwilliger.
The subconstituent algebra of an association scheme, (part I).
{\it J.~Algebraic Combin.} {\bf 1} (1992), 363--388.

\bibitem{terwnotes2009}
P.~Terwilliger.
Course lecture notes, Math 846 Algebraic Graph Theory, Spring term 2009, University of Wisconsin
\url{http://www.math.wisc.edu/~terwilli/Htmlfiles/part2.pdf}

\bibitem{truemper}
K. Truemper.
On the delta-wye reduction for planar graphs.
{\it J.~Graph Theory} {\bf 13} no.~2 (1989), 141--148.

\end{thebibliography}
\end{document}